\declaretheoremstyle[mdframed={linecolor=gray,linewidth=0.5pt}]{examplestyle}
\theoremstyle{plain}
\declaretheorem[name=Theorem, numberwithin=section]{theorem}
\declaretheorem[name=Conjecture, sibling=theorem]{conjecture}
\declaretheorem[name=Proposition, sibling=theorem]{proposition}
\declaretheorem[name=Lemma, sibling=theorem]{lemma}
\declaretheorem[name=Example, sibling=theorem, style=examplestyle]{example}
\declaretheorem[name=Definition, sibling=theorem]{definition}
\declaretheorem[name=Remark, sibling=theorem]{remark}
\newcounter{ybpos}
\newcounter{xbpos}
\newlength{\boxsize}
\NewDocumentEnvironment{YD}{sO{15pt}}{%
    \setcounter{ybpos}{0}
    \setlength{\boxsize}{#2}
    \begin{tikzpicture}[outer sep=0pt]
}{%
    \end{tikzpicture}
}
\NewDocumentCommand{\boxrow}{m}{
    \setcounter{xbpos}{0}
    \draw[] (0,-\theybpos*\boxsize) -- (0,-\theybpos*\boxsize+\boxsize);
    \foreach \xbwd/\boxcolor/\bsymb in {#1}{%
    	\foreach \i in {1,...,\xbwd} {
	        \node[draw,
	            fill=\boxcolor,
	            minimum height=\boxsize,
	            minimum width=\boxsize,
	            anchor=south west] at (\thexbpos*\boxsize,-\theybpos*\boxsize) {\(\bsymb\)};
	        \addtocounter{xbpos}{1}
		}
    }
    \stepcounter{ybpos}%
}
\NewDocumentCommand{\boxrowa}{m}{
    \setcounter{xbpos}{0}
    \draw[] (0,-\theybpos*\boxsize) -- (0,-\theybpos*\boxsize+\boxsize) node[midway]{\(\ast\)};
    \stepcounter{ybpos}%
}
\NewDocumentCommand{\boxrowta}{m}{
    \setcounter{xbpos}{0}
    \foreach \xbwd/\boxcolor/\bsymb in {#1}{%
    	\foreach \i in {1,...,\xbwd} {
	        \node[draw,
	            fill=\boxcolor,
	            minimum height=\boxsize,
	            minimum width=\boxsize,
	            anchor=south west] at (\thexbpos*\boxsize,-\theybpos*\boxsize) {\(\bsymb\)};
	        \addtocounter{xbpos}{1}
		}
    }
    \draw[] (\thexbpos*\boxsize,-\theybpos*\boxsize+\boxsize) -- (\thexbpos*\boxsize+\boxsize,-\theybpos*\boxsize+\boxsize) node[midway]{\(\ast\)};
    \addtocounter{xbpos}{1}
    \stepcounter{ybpos}%
}
\title{\scshape An iterative-bijective approach to asymmetric generalizations of Schur's theorem}
\author{Laure Velenik}
\date{\today}
\begin{document}
\maketitle
\abstract{In this paper, we present a new Rogers--Ramanujan type identity for overpartitions by extending the asymmetrical version of Schur’s theorem due to Lovejoy to a broader class of infinite products. More precisely, we provide a combinatorial interpretation of the following product, for any positive integer $k$, as a generating function for a class of overpartitions in which parts appear in $2^k - 1$ colors:
\[
\frac{(-y_1 q;q)_\infty \cdots (-y_k q;q)_\infty}{(y_1 d q;q)_\infty}.
\]
Our proof is bijective and unifies two earlier approaches: Lovejoy’s bijective proof of the asymmetrical Schur theorem and the iterative-bijective technique developed by Corteel and Lovejoy.}

\section{Introduction}\label{sec:introduction}

\begin{definition}
Let $n\in\mathbb{N}.$ Let $\lambda_1 \geq \lambda_2 \geq\dots\geq \lambda_L$ be positive integers called parts, such that $\lambda_1 + \lambda_2 + \dots + \lambda_L = n.$ Then \( \lambda=(\lambda_1,\lambda_2,\dots,\lambda_L) \) is called an integer partition of $n.$
\end{definition}
\begin{remark}
There is exactly one partition of $0,$ that is the empty partition.
\end{remark}
Let \(p(n)\) denote the number of integer partitions of \(n\). The generating function for integer partitions is
\[
    \sum_{n\geq0} p(n) q^n = \frac{1}{(q;q)_\infty},
\]
where we use the q-Pochhammer symbol defined as
\[
(a;q)_N\coloneqq \prod_{n = 0}^{N-1} (1-aq^n),
\]
where $N\in\mathbb{N}\cup\{\infty\}$ and we use the convention that $(a;q)_0 \coloneqq 1.$
\medskip

In 1926, Schur~\cite{Schur1926} proved a now famous theorem on integer partitions:
\begin{definition}
Let $n\in\mathbb{N}.$ We define $S(n)$ to be the number of partitions $\lambda$ of $n$ such that
    \[
    \lambda_i-\lambda_{i+1} \geq
    \begin{cases}
        6, &\text{if } \lambda_i \equiv \lambda_{i+1} \equiv 0\;(\text{mod }3), \\
        3, &\text{otherwise.}
    \end{cases}
    \]
\end{definition}
\begin{theorem}[Schur, 1926]\label{thm:Schur}
Let $n \in \mathbb{N}.$ Then, $S(n)$ is equal to the number of partitions of $n$ into distinct parts not divisible by $3.$
\end{theorem}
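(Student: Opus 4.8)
The plan is to prove Schur's theorem via generating functions, following the classical approach that ultimately goes back to Andrews. First I would set up the generating function for the right-hand side: partitions into distinct parts not divisible by $3$ have generating function $(-q;q^3)_\infty(-q^2;q^3)_\infty$. The goal is to show this equals $\sum_{n\ge 0} S(n) q^n$. I would introduce a refinement by tracking the number of parts (or better, by tracking parts in each residue class mod $3$ with separate variables), since the recursive structure of the gap conditions is hard to see at the level of the plain partition count.

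The main work is analyzing the set counted by $S(n)$. I would let $d(m)$ denote the generating function for Schur-type partitions whose smallest part is at least $m$ (or exactly the relevant truncated family), and derive a $q$-difference equation by classifying such a partition according to its smallest part $\lambda_L$ and its residue mod $3$. The gap condition says the \emph{next} part up must exceed $\lambda_L$ by $3$ in general, but by $6$ when both are $\equiv 0 \pmod 3$; peeling off the smallest part and shifting gives a functional equation relating the generating functions. The standard trick is to work with two auxiliary functions (one allowing a part $\equiv 0$, one not) or equivalently to package everything into a single function $f(x,q)=\sum_n x^n q^{\binom{?}{}}\cdots$ satisfying a clean recurrence like $f(x) = f(xq^3) + \text{(correction terms)}$, then verify that $(-xq;q^3)_\infty(-xq^2;q^3)_\infty$ satisfies the same recurrence and initial condition, so the two agree by uniqueness of the solution as a formal power series in $x$.

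The hard part will be getting the $q$-difference equation exactly right: the mod-$3$ case distinction in the gap condition means a naive "remove the smallest part" bijection doesn't land in a single clean smaller family, so one has to carefully split into cases based on the residues of the two smallest parts and track how the exponent on the color/counting variable changes. Organizing these cases so that the recurrence closes — i.e., so the right-hand side is expressible in the same auxiliary functions with arguments $xq^3$ — is the crux. An alternative I would keep in mind is Bressoud's or Andrews's combinatorial substitution argument, or a direct bijection between Schur-type partitions and distinct parts not divisible by $3$ via a "merging" procedure on pairs of parts congruent to $1$ and $2$; but since this paper's whole thrust is iterative-bijective machinery, I would expect the cleanest route here to be the $q$-difference equation, deferring the bijective viewpoint to the general theorem where it is reused.

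Finally I would check the base case (the empty partition contributes $1$, matching the constant term $1$ of the product) and confirm convergence/uniqueness of the formal power series solution in $x$, then set $x=1$ to recover $\sum_n S(n)q^n = (-q;q^3)_\infty(-q^2;q^3)_\infty$, which is exactly the generating function for distinct parts not divisible by $3$. This completes the proof modulo the routine verification that the claimed product solves the same recurrence — a short computation using $(-xq;q^3)_\infty = (1+xq)(-xq^4;q^3)_\infty$ and the analogous identity for the other factor.
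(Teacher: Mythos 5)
The paper does not prove Theorem~\ref{thm:Schur} at all: Schur's theorem is cited as a classical result (with reference to Schur 1926 and a catalogue of later proofs by Andrews, Bessenrodt, Bressoud), and the only route the paper sketches to it is via specialization of the method of weighted words. Concretely, the paper observes that Theorem~\ref{thm:AlladiGordonVR} (Alladi--Gordon), whose generating-function form is $\sum R(i,j;n)\,y_1^i y_2^j q^n = (-y_1 q;q)_\infty(-y_2 q;q)_\infty$, yields Schur's theorem under the dilation $q\mapsto q^3$ together with the translations $y_1\mapsto y_1 q^{-2}$, $y_2\mapsto y_2 q^{-1}$ (and then setting $y_1=y_2=1$); and Theorem~\ref{thm:AlladiGordonVR} is proved \emph{bijectively}, which is precisely the technology this paper then iterates. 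Equivalently, Schur's theorem is the $m=3$, $r=1$ case of Gleissberg's Theorem~\ref{thm:Gleissberg} after summing over $k$. Your proposed $q$-difference-equation proof is a genuinely different path --- essentially the Andrews-style analytic approach --- and while it is a legitimate and classical way to prove Schur's theorem, it is orthogonal to the paper's weighted-words, iterative-bijective framework; nothing in it feeds into Sections~\ref{subsec:caseKis2}--\ref{subsec:caseKbigger3}.

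Two further cautions. First, your write-up is an outline, not a proof: you explicitly defer the ``hard part,'' namely writing down the correct $q$-difference equation, splitting on the residues of the two smallest parts so that the recurrence closes, and verifying uniqueness of the solution. That case analysis is precisely where the mathematical content of the Andrews argument lives, and leaving it unresolved means the proposal doesn't yet establish the theorem. Second, your parenthetical intuition that ``since this paper's whole thrust is iterative-bijective machinery, I would expect the cleanest route here to be the $q$-difference equation'' has the logic inverted: because the paper's machinery is bijective, the natural route \emph{within the paper} is to deduce Schur's theorem as a dilated specialization of the bijectively proved colored statement, not to graft on a separate analytic argument. If you want a proof consonant with the paper, prove (or cite) the bijection behind Theorem~\ref{thm:AlladiGordonVR}, then perform the dilation and check that the gap conditions of Definition~\ref{def:VetC}(b) transform into the $3/6$ gap conditions defining $S(n)$.
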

In terms of generating functions, this corresponds to
    \[
    \sum_{n\geq0} S(n) q^n = (-q;q^3)_\infty(-q^2;q^3)_\infty.
    \]

This theorem was proved in many ways and from it arised a large number of new results. Among them, we can find several proofs by Andrews: two in the late 1960s~\cite{Andrews1967, Andrews1968} and another in the mid-1990s~\cite{Andrews1994}. His many works led to a bijective proof of Schur's theorem by Bessenrodt in 1991~\cite{Bessenrodt1991}. However, in this paper, we are interested in some other bijective proofs, originally based on an extension of Schur's theorem to a general modulus $m\geq3,$ made by Gleissberg in 1928~\cite{Gleissberg1928}:
\begin{definition}
Let $n,k\in\mathbb{N}.$ Let $m,r \in \mathbb{N}_{>0}$ with $r<\frac{m}{2}.$ We define
\begin{enumerate}
\item[(a)] $C_{k,r}(n)$ to be the number of partitions of $n$ into $k$ distinct parts $\equiv \pm r\,(\text{mod }m),$ \item[(b)] $D_{k,r}(n)$ to be the number of partitions $\lambda$ of $n$ into $k$ parts $\equiv 0,\pm r\;(\text{mod }m)$ such that
    \begin{enumerate}
        \item[(i)] the parts $\lambda_i\equiv0\,(\text{mod }m)$ are counted twice,
        \item[(ii)] \[
    \lambda_i-\lambda_{i+1} \geq
    \begin{cases}
        2m, &\text{if } \lambda_i \equiv \lambda_{i+1} \equiv 0\;(\text{mod }m), \\
        m, &\text{otherwise.}
    \end{cases}
    \]
    \end{enumerate}
\end{enumerate}
\end{definition}
\begin{theorem}[Gleissberg, 1928]\label{thm:Gleissberg}
    Let $k,m,r \in \mathbb{N}_{>0}$ with $r<\frac{m}{2}.$ Then,
\[
C_{k,r}(n)=D_{k,r}(n).
\]
\end{theorem}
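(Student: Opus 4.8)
The plan is to exhibit an explicit bijection
\[
\Phi:\ \mathcal{C}_{k,r}(n)\ \longrightarrow\ \mathcal{D}_{k,r}(n),
\]
where $\mathcal{C}_{k,r}(n)$ is the set of partitions of $n$ into $k$ distinct parts $\equiv\pm r\pmod m$ and $\mathcal{D}_{k,r}(n)$ is the set of partitions counted by $D_{k,r}(n)$; comparing cardinalities then yields $C_{k,r}(n)=D_{k,r}(n)$. The organising observation is a two-colour dichotomy: a part of a $\mathcal{C}_{k,r}$-partition is \emph{red} if it is $\equiv r\pmod m$ and \emph{blue} if it is $\equiv m-r\equiv-r\pmod m$, and two adjacent parts $\lambda_i>\lambda_{i+1}$ of such a partition can violate the difference condition of $\mathcal{D}_{k,r}$ only when they have opposite colours, in which case the gap is exactly $2r$ (red above blue) or $m-2r$ (blue above red), and in either case $\lambda_i+\lambda_{i+1}\equiv0\pmod m$. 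Hence a violation is removed by \emph{merging} the offending pair into the single part $\lambda_i+\lambda_{i+1}$ of residue $0$: since such a part is counted twice in $\mathcal{D}_{k,r}$, both $n$ and the weighted count $k$ are preserved. Along a maximal run of mutually violating parts the colours, and hence the gaps, alternate between the two values $2r$ and $m-2r$, so merging consecutive pairs along the run produces residue-$0$ parts whose successive differences are \emph{exactly} $2m$ — precisely the bound imposed between two parts $\equiv0\pmod m$ in the definition of $D_{k,r}(n)$. This exact calibration of the constants $m$ and $2m$ is the structural reason the identity holds.

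From here $\Phi$ is assembled \emph{iteratively}, in the spirit of Lovejoy's bijective proof of the asymmetric Schur theorem and of the Corteel--Lovejoy insertion technique: scan the partition from the largest part down, and at the first violating pair perform the merge above. Because a merged residue-$0$ part exceeds each of its constituents, it may have to be slid up past larger parts, and this may create a new, ``mixed'' violation between a residue-$0$ part and a red or blue part; such a violation is repaired by a complementary move that shifts one block of size $m$ from the red/blue part onto the residue-$0$ part, preserving both residues, hence the sum and the count. One then rescans. Termination is argued by a monovariant: each merge strictly decreases the number of parts of residue $\not\equiv0\pmod m$, and between consecutive merges the shift moves strictly decrease a lexicographic potential on the part sequence. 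The terminal configuration has no violation and all parts in residues $0,\pm r$, so it lies in $\mathcal{D}_{k,r}(n)$.

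The inverse $\Phi^{-1}$ runs the same loop in reverse: one locates, in a prescribed reading order, the residue-$0$ part produced by the most recent merge, \emph{splits} it into a red part and a blue part summing to it, and undoes the accompanying shift and sliding moves; the difference conditions of $\mathcal{D}_{k,r}$ — above all the gap $\ge 2m$ between consecutive residue-$0$ parts — are exactly what make the two produced parts automatically distinct from the rest, returning one to $\mathcal{C}_{k,r}(n)$. The delicate point, and the one I expect to be the main obstacle, is proving the forward and backward loops are step-by-step inverses: this forces one to pin down \emph{which} residue-$0$ part to split and \emph{how} to split it (the analogue of choosing the insertion site in the Corteel--Lovejoy scheme), which amounts to tracking, alongside each residue-$0$ part, a bounded amount of auxiliary data — how far it was slid, how many size-$m$ shifts it absorbed — and showing this data is reconstructible from the $\mathcal{D}$-partition alone. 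Once $\Phi$ and $\Phi^{-1}$ are shown to be mutually inverse, the theorem follows. As an independent check one could instead derive a $q$-difference equation for $\sum_n D_{k,r}(n)q^n$ by deleting the smallest part and verify that the manifest product $\prod_{j\ge0}(1+zq^{jm+r})(1+zq^{jm+m-r})$ generating the $\mathcal{C}_{k,r}$-side solves it; but the bijective route is the one in keeping with the methods of this paper.
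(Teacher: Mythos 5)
Theorem~\ref{thm:Gleissberg} appears in this paper only as historical background; the paper gives no proof of it, instead pointing to Bressoud~\cite{Bressoud1980} for a bijective argument. There is therefore no proof in the paper to compare yours against, and what you have written is an outline in the Bressoud style. Your opening analysis is correct and well calibrated: in a $\mathcal{C}_{k,r}$-partition a violation of the $\mathcal{D}_{k,r}$ gap can occur only between parts of opposite residues $r$ and $m-r$; that gap is then exactly $2r$ or $m-2r$ and the pair-sum is $\equiv 0\pmod{m}$; counting the merged residue-$0$ part twice keeps the weighted statistic $k$; and along an alternating run, pairwise merging produces residue-$0$ parts spaced by exactly $2m$. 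That is precisely the arithmetic that makes the identity plausible, and identifying it is the right first step.

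What you have is, however, still a plan and not a proof, and the gaps are substantive rather than cosmetic. The whole difficulty of the bijection is concentrated in the ``slide and shift'' repair that you introduce but leave unspecified: you do not say precisely when a size-$m$ block is transferred, how many times, or in what order; you do not rule out that a repair step destroys a gap that was previously sound (between two residue-$0$ parts, or between the enlarged residue-$0$ part and its new neighbour above); and the termination monovariant is asserted, not established, since the ``lexicographic potential on the part sequence'' is never defined and shift moves could a priori oscillate. Most seriously, the inverse map is only a wish: given a $\mathcal{D}_{k,r}$-partition, nothing in your description determines which residue-$0$ part to split next, into which ordered pair of summands of residues $r$ and $m-r$, nor how to undo the accumulated slides and shifts. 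You flag this yourself as ``the main obstacle,'' and it is: until the update rules are specified to the point that both directions are \emph{forced} moves and each step manifestly undoes the corresponding step of the other direction, $\Phi$ is not exhibited as a bijection. Pinning down that canonical form is exactly the content of Bressoud's proof, and it is the part that is missing here.
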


\begin{remark}
In the case $m=3,$ $r=1$ and summing over all possible values of $k,$ Theorem~\ref{thm:Gleissberg} reduces to Theorem~\ref{thm:Schur}.
\end{remark}

D.~Bressoud gave bijective proofs of both Schur's and Gleissberg's theorems~\cite{Bressoud1980}, as well as bijective proofs for results of Göllnitz~\cite{Bressoud1979}. These contributions in turn motivated Alladi and Gordon to further extend Schur's theorem~\cite{AlladiGordon1993}. To prove their extension, they actually proved another result on partitions, stated below, where the parts are \emph{colored} instead of being under the constraint of some congruences. Colored partitions are defined as follows:
\begin{definition}
Let $n,\,k\in\mathbb{N},$ $k>0.$ Let $c_1,c_2,\dots,c_L \in \{1,\dots,k\}.$ Let $\lambda=(\lambda_1,\lambda_2,\dots,\lambda_L)$ be an integer partition of $n.$ Then,
\[
\lambda=(\,{(\lambda_1)}_{c_1},\,{(\lambda_2)}_{c_2},\,\dots,\,{(\lambda_L)}_{c_L}),
\]
is an integer partition of $n$ in (at most) $k$ colors, where $c_i$ is the color of the part $\lambda_i$.
\end{definition}
The associated generating function is
\begin{equation}\label{eq:gfcoloredpartitions}
   \frac{1}{(y_1q;q)_\infty(y_2q;q)_\infty \cdots (y_kq;q)_\infty}.
\end{equation}
Here the exponent of $y_j$ tracks the number of parts of color $j$, while the 
exponent of $q$ records the integer being partitioned. Since a colored partition 
can be regarded as a \(k\)-tuple of ordinary partitions, one in each color, the 
generating function is simply the product of the $k$ generating functions
\[
   \frac{1}{(y_j q;q)_\infty},
\]
for \(j=1,\dots,k\).

Alladi and Gordon named their idea of using colored partitions instead of congruences the \emph{method of weighted words}.
\begin{definition}\label{def:VetC}
Let $n,i,j\in\mathbb{N}.$ Consider the colors $1,$ $2$ and $3.$
\begin{enumerate}
\item[(a)] We define $V(i,j;n)$ to be the number of ordered pairs of partitions $(\lambda,\mu)$ such that $\lambda$ has $i$ distinct parts in color $1$ and $\mu$ has $j$ distinct parts in color $2$, and where the total sum of the parts in the ordered pair is $n$.

\item[(a)] We define $R(i,j;n)$ to be the number of partitions $\pi$ of $n$ into distinct parts, with $i$ parts in color $1$ or color $3$, and $s$ parts in color $2$ or $3$, such that
    \begin{enumerate}
        \item[(i)] a part of size $1$ cannot be in color $3,$
        \item[(ii)] for all $r$,
        \[
        \pi_r-\pi_{r+1} \geq
        \begin{cases}
            2, &\text{if } \pi_r \text{ is in color $3,$ or if } \pi_r \text{ is in color $1$ and } \pi_{r+1} \text{ is in color $2,$} \\
            1, &\text{otherwise.}
        \end{cases}
        \]
    \end{enumerate}
\end{enumerate}
\end{definition}
\begin{theorem}[Alladi \& Gordon, 1993]\label{thm:AlladiGordonVR}
In the setting of Definition~\ref{def:VetC}, the following identity holds:
\[
    V(i,j;n)=R(i,j;n).
\]
\end{theorem}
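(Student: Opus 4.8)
The plan is to prove Theorem~\ref{thm:AlladiGordonVR} by reading both of its sides as coefficients in one and the same infinite product and then matching them, ideally by an explicit bijection of the iterative type the rest of the paper is built on.

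First I would pass to generating functions. Marking a part of color $1$ by $a$ and a part of color $2$ by $b$, a pair $(\lambda,\mu)$ counted by $V$ is simply an arbitrary pair consisting of a partition into distinct color-$1$ parts and a partition into distinct color-$2$ parts, so $\sum_{i,j,n}V(i,j;n)\,a^{i}b^{j}q^{n}=(-aq;q)_\infty(-bq;q)_\infty$. On the $R$-side, a part of color $3$ must be regarded as a part that is simultaneously of type $1$ and of type $2$: if a partition $\pi$ counted by $R$ has $a'$ parts of color $1$, $b'$ of color $2$ and $c$ of color $3$, then $i=a'+c$, $j=b'+c$, and $\pi$ has $N:=a'+b'+c=i+j-c$ parts. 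So whatever map we use must convert the $c$ ``coincidences'' between $\lambda$ and $\mu$ into the $c$ parts of color $3$ of $\pi$, with the surviving parts of $\lambda$ and $\mu$ becoming the color-$1$ and color-$2$ parts.

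The shortest honest proof of $\sum_{i,j,n}R(i,j;n)\,a^{i}b^{j}q^{n}=(-aq;q)_\infty(-bq;q)_\infty$ is to classify $\pi$ by the word $w\in\{1,2,3\}^{N}$ recording the colors of $\pi_1>\pi_2>\cdots>\pi_N$ from the top down. Every word occurs, and for each $w$ there is a unique minimal partition $M_w$ with that color pattern --- the one in which every forced gap (below a color-$3$ part, or below a color-$1$ part lying above a color-$2$ part) equals exactly $2$, every other gap equals $1$, and $\pi_N$ is as small as allowed ($1$, or $2$ if $w_N=3$); then $\pi$ is recovered from $M_w$ by adding, componentwise in decreasing order of size, an arbitrary partition into at most $N$ parts. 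Summing $q^{|M_w|}$ over the words of a fixed content $(a',b',c)$ yields a power of $q$ times a $q$-binomial coefficient counting interleavings, and a Gauss / $q$-Chu--Vandermonde summation over $c$ collapses the bookkeeping to the product above; that last step is routine once the word count is in hand.

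What I would really want, since it is what the paper's methods demand, is to realize the equality as a bijection constructed \emph{one part of $\mu$ at a time}. Start from $\pi:=\lambda$, which trivially lies in the class counted by $R$ (no color-$2$ or color-$3$ part, all gaps $\geq 1$), and insert the parts of $\mu$ in turn by a local move that either creates a new color-$2$ part of $\pi$ or promotes an existing color-$1$ part to color $3$, in either case together with a redistribution among the smaller parts that preserves the total and re-establishes both gap thresholds; after $j$ insertions the statistics are correct. The real obstacle is to pin this insertion down so that it is well defined, always lands back inside the class counted by $R$, and is invertible: this calls for a case analysis on the colors of the (at most two) parts flanking the insertion point and on where the chain of adjustments to the smaller parts terminates, with the prohibition on a part of size $1$ in color $3$ threaded consistently through every case; and for the inverse one must show that the part of $\pi$ ``contributed last by $\mu$'' is uniquely recognizable, so that deletions undo insertions in the reverse order.
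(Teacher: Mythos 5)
The paper does not actually prove Theorem~\ref{thm:AlladiGordonVR}: it is imported from Alladi and Gordon (1993) as a background result, with the remark that those authors gave it two proofs, one via generating functions and multinomial/$q$-binomial coefficients, and one bijectively (Bressoud-style). The bijection the rest of this paper is built on appears in Section~\ref{subsec:caseKis2} as Lovejoy's overpartition refinement (Theorem~\ref{thm:Lovejoy2017}); setting $d=0$ there recovers a bijective proof of the present statement. So a fair comparison is against that bijection and against the attributed Alladi--Gordon proofs.

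Your first route (classify partitions counted by $R$ by the color word $w$, peel off the minimal partition $M_w$, and sum) is the same architecture as Alladi and Gordon's first, generating-function proof, and the decomposition $\pi = M_w + \nu$ with $\nu$ a partition into at most $N$ parts is correct. But this is a sketch, not a proof: the crux is precisely the claim that $\sum_w q^{|M_w|}$ over words of fixed content $(a',b',c)$ is a power of $q$ times a single $q$-binomial, followed by a $q$-Chu--Vandermonde collapse. That word sum is exactly the nontrivial combinatorial lemma in Alladi--Gordon's paper; calling the last step ``routine'' hides essentially all of the work. The formula $|M_w| = N\cdot(M_w)_N + \binom{N}{2} + \sum_{j\in T\cup P} j$ (with $T$ the positions of color $3$ and $P$ the positions where color $1$ sits above color $2$) shows the dependence on $w$ is not merely through content, so you still owe a proof that the sum factors as claimed.

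Your second route is explicitly unfinished and, more to the point, is not the bijection the paper uses. The Alladi--Gordon/Bressoud/Lovejoy bijection, as reviewed in Section~\ref{subsec:caseKis2}, is a global four-step transformation: add the short parts of $\mu$ as columns on the side of $\lambda$ while upgrading colors; strip off a staircase from $\mu$ and a generalized staircase from $\lambda$ and merge them; insert the remaining parts of $\mu$ into the flattened object; then restore the combined partial staircase. The staircase subtraction in Step~2 is what makes the subsequent insertion in Step~3 painless (no gap repair is needed beyond the color redistribution, which itself only appears for $k\geq 3$). Your proposal of one-at-a-time insertion with a rippling chain of local adjustments skips the normalization and, as you acknowledge, leaves the hard part --- well-definedness, closure, and invertibility --- to a case analysis you do not carry out. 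In particular, the prohibition against a size-$1$ part of color $3$ is handled in the paper's scheme automatically by the normalization (see the discussion under the highlight ``There are no parts $1_3$ or $\overline{1}_3$'' in the $k=2$ proof), and it is not clear your local-move scheme threads it through cleanly. As written, neither of your routes is a complete proof.
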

The generating function for $V(i,j;n)$ is $(-y_1q;q)_{\infty}(-y_2q;q)_{\infty}.$ Therefore, in terms of generating functions, Theorem~\ref{thm:AlladiGordonVR} states that
\[
\sum_{n,i,j\geq0}R(i,j;n)y_1^iy_2^jq^n=(-y_1q;q)_{\infty}(-y_2q;q)_{\infty}.
\]

Note that under the dilation $q \mapsto q^3$ and translations $y_1\mapsto y_1q^{-2},$ $y_2 \mapsto y_2q^{-1},$ Theorem~\ref{thm:AlladiGordonVR} yields Schur's theorem. Other dilations or translations would lead to new but related results on partitions, which is one of the reasons why the method of weighted words is useful.

Apart from being used for this result, their method also provided a connection between Schur's theorem and multinomial coefficients and was essential in proving that there are six companion results to Schur's theorem~\cite{AlladiGordon1995}. Their method was also used in joint works with E. Andrews to generalize other partition problems, such as Göllnitz's theorems~\cite{AlladiAndrewsGordon1995} or Capparelli's theorem~\cite{AlladiAndrewsGordon1995-2}.

In 1993, Alladi and Gordon gave two proofs of Theorem~\ref{thm:AlladiGordonVR}~\cite{AlladiGordon1993}. The first one uses generating functions and multinomial coefficients. The second one is a bijective proof different from Bressoud's bijective proof of Schur's theorem~\cite{Bressoud1980}, but inspired by ideas of Bressoud on theorems of Göllnitz~\cite{Bressoud1979}. In 2006, Corteel and Lovejoy~\cite{CorteelLovejoy2006} had the idea to iterate this bijective proof in a particular way, in order to have an interpretation of the following infinite product as a generating function for certain partitions where the parts come in $2^k-1$ colors:
\[
(-y_1q;q)_{\infty}(-y_2q;q)_{\infty}\cdots(-y_kq;q)_{\infty}.
\]
To better understand it, we consider the use of \textsl{primary colors}. To be precise, we define $\{2^j\,:\,0 \leq j < k\}$ to be the set of primary colors. Sums of these primary colors produce new colors, and the complete set of colors obtained in this way is $\{1,2,3,\dots,2^k-1\}.$ The integer partitions that will be considered will take their colors in this last set.
\begin{definition}\label{def:notations}
Let $c$ denote the color of a part. Let $c_i$ and $c_{i+1}$ denote the colors of adjacent parts $\lambda_{i}$ and $\lambda_{i+1}.$
\begin{enumerate}
    \item[(a)] We define $\omega(c)$ to be the number of powers of $2$ (\textsl{primary colors}) in the binary representation of $c$:
\[
\omega(c) \coloneqq \sum_{k=0}^{\infty} a_k,
\]
where $c=\sum_{k=0}^{\infty} a_k 2^k$ and $a_k\in\{0,1\}$ for all $k.$
    \item[(b)] We define $v(c) \text{ (resp. $z(c)$)}$ to be the smallest (resp. largest) power of $2$ (\textsl{primary color}) occurring in the binary representation of $c.$
    \item[(c)] We define $\delta(c_i,c_{i+1})$ as follows:
    \[
\delta(c_i,c_{i+1})\coloneqq
    \begin{cases}
        1, &\text{if } z(c_i)<v(c_{i+1}), \\
        0, &\text{otherwise}.
    \end{cases}
\]
\end{enumerate}
\end{definition}
\begin{definition}\label{def:AetB}
Let $x_1,\dots,x_k,m,n \in \mathbb{N}.$ We define
\begin{enumerate}
\item[(a)] $A(x_1,\dots,x_k;n)$ to be the number of $k$-tuples $(\mu_1,\dots,\mu_k),$ where $\mu_r$ is a partition into $x_r$ distinct parts for all $r,$ and the sum of all of the parts in the $k$-tuple is $n,$
\item[(b)] $B(x_1,\dots,x_k;n)$ to be the number of partitions $\lambda=(\lambda_1,\dots,\lambda_L)$ of $n$ into distinct parts and in $2^k-1$ colors, i.e $c_1,\dots,c_L\in\{1,\dots,2^k-1\},$ satisfying the following conditions:
    \begin{enumerate}
      \renewcommand{\labelenumi}{(\roman{enumi})} 
      \item[(i)] the smallest part satisfies $\lambda_L \geq \omega(c_L),$ 
      \item[(ii)] $x_r$ parts have $2^{r-1}$ in their color's binary representation, for all $r\in\{1,\dots,k\},$
      \item[(iii)] $\lambda_i-\lambda_{i+1} \geq \omega(c_i) + \delta(c_i,c_{i+1}),$ for all $i\in\{1,\dots,L-1\}.$
    \end{enumerate}
\end{enumerate}
\end{definition}
\begin{theorem}[Corteel \& Lovejoy, 2006]\label{thm:CorteelLovejoy}
In the setting of Definition~\ref{def:AetB}, the following identity holds:
\[
A(x_1,\dots,x_k;n)=B(x_1,\dots,x_k;n).
\]
\end{theorem}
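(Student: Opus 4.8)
The plan is to induct on $k$, the case $k=1$ being immediate and the inductive step being powered by a single sum-preserving \emph{merging} bijection that generalizes the Alladi--Gordon bijection of Theorem~\ref{thm:AlladiGordonVR}; indeed, for $k=2$ that merging bijection is exactly Theorem~\ref{thm:AlladiGordonVR}, since with colors $\{1,2,3\}$ the gaps $\omega(c_i)+\delta(c_i,c_{i+1})$ of Definition~\ref{def:AetB} reproduce the difference conditions and refined statistics of Definition~\ref{def:VetC}. For $k=1$ the only color is $1=2^0$, so $\omega(c_i)=1$ and $z(c_i)=v(c_{i+1})=1$, hence $\delta(c_i,c_{i+1})=0$; condition~(iii) becomes $\lambda_i-\lambda_{i+1}\ge 1$, condition~(i) is automatic, and condition~(ii) forces $L=x_1$, so $B(x_1;n)$ counts partitions of $n$ into $x_1$ distinct parts, which is $A(x_1;n)$.

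For the inductive step, assume the theorem for $k-1$, so that the $(k-1)$-tuples counted by $A(x_1,\dots,x_{k-1};\cdot)$ are in sum- and statistic-preserving bijection with the colored partitions (in colors $\{1,\dots,2^{k-1}-1\}$) counted by $B(x_1,\dots,x_{k-1};\cdot)$. It then suffices to build a sum-preserving bijection
\[
\Psi_k:\ \bigl\{(\lambda,\mu)\bigr\}\ \longrightarrow\ \bigl\{\text{partitions counted by }B(x_1,\dots,x_k;\cdot)\bigr\},
\]
where $\lambda$ runs over partitions counted by $B(x_1,\dots,x_{k-1};\cdot)$ and $\mu$ over partitions into $x_k$ distinct parts: composing $\Psi_k^{-1}$ with the bijection of the induction hypothesis rewrites each level-$k$ object as a pair and then as a $k$-tuple, i.e.\ as an object counted by $A$, and the whole chain preserves $n$ and each $x_r$. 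To define $\Psi_k$, give every part of $\mu$ the new primary color $2^{k-1}$. The fact that makes iteration possible is that every old color $c\in\{1,\dots,2^{k-1}-1\}$ has $z(c)\le 2^{k-2}<2^{k-1}=v(2^{k-1})$, so $\delta(c,2^{k-1})=1$ and $\delta(2^{k-1},c')=0$ for all old colors $c,c'$; thus against the new primary color the old colors uniformly behave like ``color $1$'' and $\mu$ behaves like ``color $2$'' in Theorem~\ref{thm:AlladiGordonVR}. Concretely $\Psi_k$ inserts the parts of $\mu$ one at a time into $\lambda$: a part of $\mu$ that can be placed while keeping condition~(iii) enters as a fresh part of color $2^{k-1}$; otherwise it is absorbed by a neighbouring part, whose color $c$ is replaced by $c+2^{k-1}$ (so $\omega$ of that part increases by exactly $1$), the resulting deficit being cascaded down the affected block until all gaps are legal again, just as in the Bressoud--Alladi--Gordon--Lovejoy insertion. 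Since each absorbed $\mu$-part adds the bit $2^{k-1}$ to exactly one output part, after the $k-1$ merges the colors present are $\{1,\dots,2^{k-1}-1\}\cup\{2^{k-1}\}\cup\{2^{k-1}+1,\dots,2^k-1\}=\{1,\dots,2^k-1\}$, and since a color never loses a bit, the counts $x_1,\dots,x_k$ are transported correctly. The inverse of $\Psi_k$ strips the parts whose color contains $2^{k-1}$ (restoring the staircase), recovering $\mu$ (with $x_k$ parts, by condition~(ii) at index $k$) and $\lambda$.

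What remains is the verification that $\Psi_k$ is well defined and bijective, and this is the step I expect to be the main obstacle: one must check that the output is a partition into distinct parts in colors $\{1,\dots,2^k-1\}$ with the prescribed multiplicities, that the smallest-part bound~(i) survives the color change $c\mapsto c+2^{k-1}$, and --- the delicate point --- that condition~(iii) holds after each insertion. This last check is precisely what forces the gap to have the form $\omega(c_i)+\delta(c_i,c_{i+1})$: when a part's color passes from $c$ to $c+2^{k-1}$ the gap it must leave before its successor grows by $1$ through the $\omega$-term, while $v$ stays put but $z$ jumps to $2^{k-1}$, so the two $\delta$-terms flanking that part have to be re-evaluated, and one needs the cascade to move exactly enough weight to cover the net increment without ever creating a negative gap or violating a condition at either end of the modified block. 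Once this single-step merging lemma is established, the induction closes and yields $A(x_1,\dots,x_k;n)=B(x_1,\dots,x_k;n)$ for all $k$.
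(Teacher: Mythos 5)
You have the right high-level strategy, which is the one Corteel and Lovejoy use and which this paper adopts for the more general Theorem~\ref{thm:LV}: induct on $k$, with each inductive step given by a sum- and statistic-preserving bijection merging a $B(x_1,\dots,x_{k-1};\cdot)$-partition with a partition into $x_k$ distinct parts of a new primary color $2^{k-1}$, the $k=2$ step being essentially Theorem~\ref{thm:AlladiGordonVR}. (The paper does not prove Theorem~\ref{thm:CorteelLovejoy} separately; it arises as the $d=0$ specialization of the proof of Theorem~\ref{thm:LV} in Section~\ref{sec:proofnewthm}.) Your base case $k=1$ is correct, and your observation that every old color $c<2^{k-1}$ has $z(c)<2^{k-1}=v(2^{k-1})$, so the new primary color plays the role of ``color~$2$'' against all old colors, is exactly the right heuristic for why iteration is possible.

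However, the merging bijection $\Psi_k$ --- the actual content of the inductive step --- is not constructed, and the sketch you give of it would not work as stated. Your criterion for absorbing versus inserting a $\mu$-part (``if it can be placed while keeping~(iii) ... otherwise absorbed by a neighbouring part'') is not the one the bijection uses: in Step~1 of Section~\ref{subsec:caseKbigger3} a $\mu$-part $p$ is absorbed precisely when $p\le L$, where $L$ is the number of parts of $\lambda$, and the absorption is \emph{distributed} --- $1$ is added to each of the first $p$ parts of $\lambda$ and $2^{k-1}$ to the color of $\lambda_p$ --- not concentrated in a single neighbour. Your version (change one neighbour's color and ``cascade the deficit'') is not sum-preserving as written, and the cascade is left undefined. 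You also omit the staircase mechanism: $\mu$-parts exceeding $L$ have a staircase $(L+M-1,\dots,L)$ removed, the reduced parts are then inserted, and the staircase is re-added at the end; this is precisely what makes the gap conditions survivable after insertion. Finally, for $k\ge 3$ the insertion forces a \emph{redistribution of primary colors} between an inserted part and its predecessor whenever the gap is too small (Lemma~\ref{lemma:redistribcolors}), and one must check that this redistribution is well-defined and reversible (Lemma~\ref{lemma:distinctpowersof2} together with Step~$\hat{3}$); your ``cascade to cover the net increment'' neither specifies nor justifies this. You flag this yourself as ``the main obstacle,'' and you are right: without the single-step merging lemma worked out and verified, what you have is an accurate outline of the Corteel--Lovejoy strategy rather than a proof.
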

The generating function for $A(x_1,\dots,x_k;n)$ is $(-y_1 q;q)_\infty \cdots (-y_k q;q)_\infty.$ Therefore, in terms of generating functions, Theorem~\ref{thm:CorteelLovejoy} states that
    \[
    \sum_{\substack{x_1, \dots, x_k, m, n \geq 0}} B(x_1,\dots,x_k;n) y_1^{x_1} \cdots y_k^{x_k} q^n = (-y_1 q;q)_\infty \cdots (-y_k q;q)_\infty.
    \]

Around that time, Corteel and Lovejoy also introduced a new type of integer partitions called \emph{overpartitions}~\cite{CorteelLovejoy2004}.
\begin{definition}
Let $n\in\mathbb{N}.$ An overpartition of $n$ is an integer partition of $n$ where the first occurrence of each value can be overlined.
\end{definition}
The generating function for overpartitions is
\begin{equation}\label{eq:gfoverpartitions}
    \frac{(-q;q)_\infty}{(dq;q)_\infty},
\end{equation}
where the exponent of $d$ counts the non-overlined parts, and the exponent of 
$q$ records the integer being partitioned. This follows from viewing an overpartition 
as a pair of a partition into distinct parts (overlined) and an unrestricted partition 
(non-overlined), so the generating function factors accordingly.

\begin{example}
Consider the overpartition $(4,4,\overline{3},3,\overline{1})$ of $15.$ It can be represented graphically using a Young diagram:
    \[
    \ytableausetup{centertableaux}
    \begin{YD}
        \boxrow{4/white/}
        \boxrow{4/white/}
        \boxrow{3/white/\bullet}
        \boxrow{3/white/}
        \boxrow{1/white/\bullet}
    \end{YD}
    \]
where the rows with dots in them represent the overlined parts (in this case $\overline{3}$ and $\overline{1}$).
\end{example}

In 2005, Lovejoy considered colored overpartitions to generalize Schur's theorem for overpartitions~\cite{Lovejoy2005}.
\begin{definition}\label{def:Doverlined}
Let $x_1,x_2,m,n\in\mathbb{N}.$ We define $\overline{D}(x_1,x_2;m,n)$ to be the number of overpartitions $\lambda=(\lambda_1,\dots,\lambda_L)$ of $n$ in $3$ colors, i.e $c_1,\dots,c_L\in\{1,2,3\},$ having $m$ non-overlined parts and satisfying the following conditions:
\begin{enumerate}
\renewcommand{\labelenumi}{(\roman{enumi})} 
\item no part $1_{3}$ or $\overline{1_{3}},$
\item the number of parts with color $1$ or $3$ is $x_1$ and the number of parts with color $2$ or $3$ is $x_2,$
\item the entry $(a,b)$ of the following matrix gives the minimal difference between $\lambda_i$ of color $a$ and $\lambda_{i+1}$ of color $b$ (where the label $d$ refers to non-overlined parts):
\[
\begin{blockarray}{ccccccc}
& 1 & 2 & 3 & 1d & 2d & 3d \\
\begin{block}{c(cccccc)}
1 & 1 & 2 & 1 & 0 & 1 & 0 \\
2 & 1 & 1 & 1 & 0 & 0 & 0 \\
3 & 2 & 2 & 2 & 1 & 1 & 1 \\
1d & 1 & 2 & 1 & 0 & 1 & 0 \\
2d & 1 & 1 & 1 & 0 & 0 & 0 \\
3d & 2 & 2 & 2 & 1 & 1 & 1 \\
\end{block}
\end{blockarray}.
 \]
\end{enumerate}
\end{definition}
\begin{remark}
Note that the third condition is equivalent to the following:
\[
(\widetilde{iii})\ \lambda_i-\lambda_{i+1} \geq \omega(c_i) + \delta(c_i,c_{i+1}) - \mathbf{1}_{\lambda_{i+1} \text{ is not overlined}},\text{ for all }i\in\{1,\dots,L-1\},
\]
where $c_1,\dots,c_L\in\{1,2,3\}$ denote the colors of the parts $\lambda_1,\dots,\lambda_L.$
This can be verified by straightforward calculations.
\end{remark}
\begin{theorem}[Lovejoy, 2005]\label{thm:SchurOverpartitions}
In the setting of Definition~\ref{def:Doverlined}, the following identity holds:
\[
\sum_{\substack{x_1, x_2, m, n \geq 0}} \overline{D}(x_1,x_2;m,n) y_1^{x_1} y_2^{x_2} d^m q^n = \frac{(-y_1 q;q)_\infty (-y_2 q;q)_\infty}{(y_1 d q;q)_\infty(y_2 d q;q)_\infty}.
\]
\end{theorem}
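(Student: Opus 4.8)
The plan is to prove the identity bijectively, by reading the right-hand side as
\[
\frac{(-y_1q;q)_\infty(-y_2q;q)_\infty}{(y_1dq;q)_\infty(y_2dq;q)_\infty}=\frac{(-y_1q;q)_\infty}{(y_1dq;q)_\infty}\cdot\frac{(-y_2q;q)_\infty}{(y_2dq;q)_\infty},
\]
which is plainly the generating function for ordered pairs $(\pi^{(1)},\pi^{(2)})$, where $\pi^{(r)}$ is an overpartition all of whose parts carry color $r$, the exponent of $y_r$ recording the number of parts of $\pi^{(r)}$, the exponent of $d$ the total number of non-overlined parts, and the exponent of $q$ the total size. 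Thus it suffices to exhibit a weight-preserving bijection between such pairs and the overpartitions enumerated by $\overline{D}(x_1,x_2;m,n)$. When $d=0$ a color-$r$ overpartition is simply a partition into distinct color-$r$ parts, and, by the reformulation $(\widetilde{iii})$ of condition (iii) from the Remark following Definition~\ref{def:Doverlined}, the conditions defining $\overline{D}$ then collapse to those defining $R(x_1,x_2;n)$; so this specialization is exactly Theorem~\ref{thm:AlladiGordonVR} (equivalently the $k=2$ case of Theorem~\ref{thm:CorteelLovejoy}, for which the conditions defining $B$ and $R$ coincide). The bijection to be constructed is therefore an extension of the Alladi--Gordon bijection that additionally accommodates non-overlined parts.

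I would build it in two stages. Decompose each $\pi^{(r)}$ into its overlined part $\rho^{(r)}$, a partition into distinct color-$r$ parts, and its non-overlined part $\nu^{(r)}$, an arbitrary partition of color-$r$ parts. First apply the Alladi--Gordon bijection to the pair $(\rho^{(1)},\rho^{(2)})$ to obtain an $R$-partition $\sigma$: a partition into distinct parts in colors $\{1,2,3\}$, with no part $1_3$, satisfying $\sigma_i-\sigma_{i+1}\ge\omega(c_i)+\delta(c_i,c_{i+1})$. Then insert the parts of $\nu^{(1)}$ and $\nu^{(2)}$ into $\sigma$, one at a time in a fixed canonical order (say increasing value, color $1$ before color $2$ on ties), each as a non-overlined part; a single insertion $\iota$ of a non-overlined color-$r$ part of value $v$ places it at the position forced by the condition $(\widetilde{iii})$, namely $\lambda_i-\lambda_{i+1}\ge\omega(c_i)+\delta(c_i,c_{i+1})-\mathbf{1}_{\lambda_{i+1}\text{ is not overlined}}$, adjusting the neighboring parts as little as possible. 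Dually, the inverse of $\iota$ amounts to ``remove the appropriate non-overlined color-$r$ part and close up the gap it leaves'', which is what makes the whole map invertible: to reverse it, repeatedly peel off non-overlined parts in the reverse canonical order until none are left, then invert the Alladi--Gordon map. One then checks that each $\iota$ preserves $(\widetilde{iii})$, never creates a part $1_3$ or $\overline{1_3}$ (color-$3$ parts are only ever pushed upward), preserves the size and the counts $x_1,x_2$, and raises the non-overlined count by exactly one; and that the composite does not depend on the remaining choices, so that it is a genuine bijection onto all $\overline{D}$-overpartitions.

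The main obstacle is to pin down $\iota$ precisely and to prove that it is bijective. Unlike the classical overpartition insertion behind $(-q;q)_\infty/(dq;q)_\infty$, here condition $(\widetilde{iii})$ couples a part's color with the overline-status of the part just below it, so inserting one non-overlined part can trigger a cascade of adjustments among the parts above it; one must show that this cascade terminates at a legal configuration and, more delicately, that it can be undone uniquely. The case analysis is governed by two features: the term $\delta(c_i,c_{i+1})$, which for colors in $\{1,2,3\}$ is nonzero exactly at a color-$1$-above-color-$2$ transition (the most constrained situation for an insertion), and the color-$3$ parts, which belong to both color classes simultaneously. The latter forces a convention: when the Alladi--Gordon map splits a color-$3$ part of $\sigma$ into a color-$1$ piece and a color-$2$ piece, one must decide which piece inherits the overline — equivalently, into which of $\nu^{(1)},\nu^{(2)}$ the $d$-weight of a non-overlined color-$3$ part flows — and one must verify that this convention is compatible with the difference conditions on both sides and with reversibility. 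Once $\iota$ and this splitting convention are correctly specified, the remaining verification (weight-preservation, and that extraction and insertion are mutually inverse) is a routine induction on the number of inserted parts. If the insertion turns out to be too cumbersome to organize cleanly, a fallback is an analytic proof, showing that both sides satisfy the same $q$-difference equation obtained by deleting the largest part and iterating; but the bijective route is the one consonant with the iterative approach of this paper.
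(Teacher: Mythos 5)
The paper does not prove Theorem~\ref{thm:SchurOverpartitions}: it is quoted from Lovejoy's 2005 article as background, and the bijection developed in Section~\ref{sec:proofnewthm} is built for the asymmetric Theorem~\ref{thm:Lovejoy2017} (the $k=2$ case of Theorem~\ref{thm:LV}), not for this symmetric statement. The conclusion even notes that a bijective proof of the $k=2$ case of Dousse's generalization --- which is exactly this theorem --- is due to Raghavendra and Padmavathamma and is of a different flavor from the construction used here. So there is no in-text proof to compare against, and your plan must stand on its own.

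Your factoring of the right-hand side, the decomposition of each single-color overpartition into overlined and non-overlined pieces, and the observation that the $d=0$ specialization recovers Alladi--Gordon are all correct. But the load-bearing step, the insertion map $\iota$, is never defined, and the one concrete structural claim you do make about it --- that it merely places a non-overlined color-$r$ part at the position forced by $(\widetilde{iii})$, and that ``color-$3$ parts are only ever pushed upward'' --- cannot be right. A correct $\iota$ must sometimes \emph{merge} the incoming non-overlined part with an existing overlined part of the other color to create a new non-overlined color-$3$ part. Take $\pi^{(1)}=(2_1)$ and $\pi^{(2)}=(\overline{1}_2)$, so $\rho^{(1)}=\emptyset$, $\nu^{(1)}=(2_1)$, $\rho^{(2)}=(1_2)$, $\nu^{(2)}=\emptyset$. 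Alladi--Gordon returns $\sigma=(\overline{1}_2)$, and you must insert a non-overlined $2_1$. The naive placement $(2_1,\overline{1}_2)$ violates the difference matrix of Definition~\ref{def:Doverlined} (entry $(1d,2)$ equals $2$, but $2-1=1$), and a direct count shows the only $\overline{D}$-overpartition of weight $y_1y_2dq^3$ not already matched by the other three preimages is the single non-overlined part $(3_3)$. So $\iota$ is forced to fuse $2_1$ with $\overline{1}_2$ into $3_3$ --- precisely the recolor-and-increment operation the paper performs in Step~1 of its Theorem~\ref{thm:Lovejoy2017} bijection, which your description omits. Until that mechanism is written down, shown to terminate, and shown to invert uniquely, the argument is a plan rather than a proof; you have named the obstacle but not overcome it.
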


Around a decade later, with the method of weighted words still in mind, Lovejoy~\cite{Lovejoy2017} gave an asymmetrical version of Schur's theorem for overpartitions and proved it in two ways: using generating functions and facts about $q$-series, and bijectively with a similar proof to the one given for Theorem~\ref{thm:AlladiGordonVR} in~\cite{AlladiGordon1993}, which was inspired by Bressoud's.
\begin{definition}\label{def:D1etD2}
Let $x_1,x_2,m,n\in\mathbb{N}.$ We define
\begin{enumerate}
\item[(a)] $\overline{D_1}(x_1,x_2;m,n)$ to be the same as $\overline{D}(x_1,x_2;m,n)$ from Definition~\ref{def:Doverlined} to which we add a fourth condition:
\[
(iv) \text{ the }s\text{ smallest parts must be overlined, where }s\text{ is the number of parts of color }2.
\]
\item[(b)] $\overline{D_2}(x_1,x_2;m,n)$ to be the same as $\overline{D}(x_1,x_2;m,n)$ from Definition~\ref{def:Doverlined} to which we add a fourth condition:
\[
(\widetilde{iv}) \text{ the }r\text{ smallest parts must be overlined, where }r\text{ is the number of parts of color }1.
\]
\end{enumerate}
\end{definition}
\begin{theorem}[Lovejoy, 2017]\label{thm:Lovejoy2017}
In the setting of Definition~\ref{def:D1etD2}, the following identities hold:
\begin{enumerate}
\item[(a)] \begin{equation}\label{eq:Soverlined}
\sum_{\substack{x_1, x_2, m, n \geq 0}} \overline{D_1}(x_1,x_2;m,n) y_1^{x_1} y_2^{x_2} d^m q^n = \frac{(-y_1 q;q)_\infty (-y_2 q;q)_\infty}{(y_1 d q;q)_\infty}.
\end{equation}
\item[(b)] \begin{equation}\label{eq:Roverlined}
\sum_{\substack{x_1, x_2, m, n \geq 0}} \overline{D_2}(x_1,x_2;m,n) y_1^{x_1} y_2^{x_2} d^m q^n = \frac{(-y_1 q;q)_\infty (-y_2 q;q)_\infty}{(y_2 d q;q)_\infty}.
\end{equation}
\end{enumerate}
\end{theorem}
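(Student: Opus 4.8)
We give a bijective proof of part~(a); part~(b) will follow from the mirror construction in which the roles of the two colours are exchanged throughout. Since the difference matrix of Definition~\ref{def:Doverlined} is not symmetric, this mirror must be carried out on the combinatorial objects rather than invoked as a formal symmetry of the statement. For the right-hand side of~\eqref{eq:Soverlined}, note that $\frac{(-y_1q;q)_\infty}{(y_1dq;q)_\infty}$ is the generating function for overpartitions in which every part has colour~$1$ (the exponent of $y_1$ counting all parts, that of $d$ counting the non-overlined ones), while $(-y_2q;q)_\infty$ is the generating function for partitions into distinct parts all of colour~$2$. Hence the product in~\eqref{eq:Soverlined} enumerates pairs $(\pi,\mu)$, with $\pi$ a colour-$1$ overpartition and $\mu$ a colour-$2$ partition into distinct parts, where $x_1$ is the number of parts of $\pi$, $x_2$ the number of parts of $\mu$, $m$ the number of non-overlined parts of $\pi$, and $n=|\pi|+|\mu|$. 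It therefore suffices to build a bijection from such pairs onto the overpartitions counted by $\overline{D_1}(x_1,x_2;m,n)$ transporting the quadruple $(x_1,x_2,m,n)$. (If one prefers to have the engine below act only on partitions into distinct parts, one first splits $\pi$ by Euler's identity into a distinct colour-$1$ partition together with an arbitrary colour-$1$ partition recorded non-overlined.)

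The engine of the bijection is the Alladi--Gordon merging procedure from the bijective proof of Theorem~\ref{thm:AlladiGordonVR}~\cite{AlladiGordon1993} (equivalently, the $k=2$ step of the Corteel--Lovejoy iteration proving Theorem~\ref{thm:CorteelLovejoy}~\cite{CorteelLovejoy2006}): one merges the parts of $\pi$ and $\mu$ into a single weakly decreasing sequence and, whenever a colour-$1$ part and a colour-$2$ part land too close to satisfy the required minimal difference, replaces them by a single colour-$3$ part with the appropriate correction (recall $\omega(3)=2$). I would run this while carrying an overline bit: a part of $\mu$ stays overlined, an unmerged part of $\pi$ keeps its status, and a merged part inherits the status of the colour-$1$ part it absorbs; the ``too close'' thresholds are read directly off the matrix of Definition~\ref{def:Doverlined}, so they already encode the relaxation $-\mathbf 1_{\lambda_{i+1}\text{ not overlined}}$ of condition $(\widetilde{iii})$ from the remark after that definition. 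This produces an overpartition satisfying conditions (i)--(iii), with the correct colour counts and the correct number of non-overlined parts; the only way it can fail to lie in the class of $\overline{D_1}$ is condition~(iv).

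I would repair condition~(iv) by a final \emph{overline redistribution}: keeping the total number of overlined parts fixed, push overlines downward so that the $s$ smallest parts become overlined, where $s$ is the number of parts of colour exactly~$2$. The inequality $m\le x_1$ — the number of non-overlined parts is at most the number of parts of colour $1$ or $3$ — guarantees that enough overlined parts are available for this; one fixes a canonical such redistribution (for instance, moving overlines to the bottom greedily) and observes that it can only create non-overlined parts of colour $2$ or $3$ as a by-product, which is precisely the signal used to undo it. The inverse map then reads off this signal to reverse the redistribution, splits a colour-$3$ part back into a colour-$1$ part of $\pi$ and a colour-$2$ part of $\mu$ whenever the difference with the part below is minimal, and finally applies the inverse Alladi--Gordon merging.

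The step I expect to be the main obstacle is exactly this interplay. One must pin down the merging/splitting thresholds and the redistribution precisely enough that they are manifestly mutually inverse, and then verify — by a case analysis on the colours and overline-statuses of two (sometimes three) consecutive parts, using the relations $M(c,1d)=M(c,1)-1$ built into the matrix — that conditions (i) and (iii) are preserved throughout, that no part produced can descend below the $s$-th part from the bottom, so that (iv) indeed holds after the redistribution, and that $(x_1,x_2,m,n)$ is transported exactly. Once this is in place, part~(b) follows verbatim with the two colours swapped: $\mu$ becomes a colour-$2$ overpartition, $\pi$ a colour-$1$ partition into distinct parts, the denominator $(y_1dq;q)_\infty$ is replaced by $(y_2dq;q)_\infty$, and condition $(\widetilde{iv})$ of Definition~\ref{def:D1etD2} (the $r$ smallest parts overlined, $r$ the number of parts of colour~$1$) takes the place of~(iv).
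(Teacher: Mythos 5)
Your proposal takes a genuinely different route from the paper's, and it has a real gap at exactly the point you flag as ``the main obstacle.'' The paper does not merge first and then carry or redistribute overlines. Instead it separates the overline structure out cleanly \emph{before} the merge: in Step~2 it strips all overlines from $\lambda^{(1)}$ as a colorless \emph{generalized staircase} and simultaneously peels a staircase $(L+M-1,\dots,L)$ off the large parts of $\mu^{(1)}$; these two colorless pieces are concatenated into a single ``partial staircase'' $\nu$. Steps~1 and~3 then act on overline-free objects (Step~1 attaching the small parts of $\mu$ by conjugation, Step~3 inserting the stripped-down remainder of $\mu$ of color $2$). Step~4 adds $\nu$ back, and because the $M$ largest parts of $\nu$ are exactly the staircase $(L+M-1,\dots,L)$ extracted from $\mu$, they produce overlines precisely on the $M$ smallest parts of $\lambda^{(4)}$; since $M$ equals the number of color-$2$ parts, condition~(iv) is not something that has to be ``repaired'' after the fact --- it is structurally forced by where the staircase sits. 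Reversibility is then inherited from the reversibility of staircase addition/removal and of the Corteel--Lovejoy insertion.

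Your ``overline redistribution'' has no analogue of this structure, and as sketched it does not obviously work. Moving an overline from one part to another changes the indicator $\mathbf{1}_{\lambda_{i+1}\text{ not overlined}}$ in condition~(iii) at two positions, so a greedy push-down can break the difference condition at the part that loses the overline; it can also put an overline on a repeated value that is not the first occurrence (the overpartition constraint), which you do not address. More seriously, the claimed ``signal'' for inverting it --- ``non-overlined parts of colour $2$ or $3$'' --- is not an invariant you can read off the output: such parts can already be present before redistribution, so there is no canonical way to tell which ones were created by the redistribution and which were inherited from the merge. Finally, your gloss of the ``Alladi--Gordon merging procedure'' as ``merge two parts that land too close into one colour-$3$ part'' is not an accurate description of that bijection (or of Corteel--Lovejoy's): in that family of proofs, color~$3$ arises from the conjugate of a small part of $\mu$ being pasted onto $\lambda$, not from collapsing a close pair. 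So the engine, the overline mechanism, and the inverse all need to be rebuilt from scratch; as written, the proposal is a plausible-sounding plan rather than a proof, and I do not believe the redistribution step can be patched without essentially rediscovering the staircase decomposition.
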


In this paper, we extend the first identity of Lovejoy’s asymmetric theorem (equation~\eqref{eq:Soverlined}) to a broader class of infinite products. More specifically, we provide a combinatorial interpretation of the following infinite product as the generating function for certain overpartitions, in which parts are colored using $2^k-1$ colors:
\[
\frac{(-y_1 q;q)_\infty \cdots (-y_k q;q)_\infty}{(y_1 d q;q)_\infty}.
\]
Our method is to iterate the bijective proof of Theorem~\ref{thm:Lovejoy2017} by Lovejoy~\cite{Lovejoy2017}. This strategy is inspired by the approach of Corteel and Lovejoy in deriving Theorem~\ref{thm:CorteelLovejoy} from Theorem~\ref{thm:AlladiGordonVR}~\cite{CorteelLovejoy2006}. To state our result precisely, we first give a definition of a particular class of overpartitions.
\begin{definition}\label{def:Soverlined}
Let $x_1,\dots,x_k,m,n \in \mathbb{N}.$ We define $\overline{S}(x_1,\dots,x_k;m,n)$ to be the number of overpartitions $\lambda=(\lambda_1,\dots,\lambda_L)$ of $n$ in $2^k-1$ colors, i.e $c_1,\dots,c_L\in\{1,\dots,2^k-1\},$ having $m$ non-overlined parts and satisfying the following conditions:
\begin{enumerate}
  \renewcommand{\labelenumi}{(\roman{enumi})} 
  \item the smallest part satisfies $\lambda_L \geq \omega(c_L),$ 
  \item $x_i$ parts have $2^{i-1}$ in their color's binary decomposition, for $i=1,\dots,k,$
  \item $\lambda_i-\lambda_{i+1} \geq \omega(c_i) + \delta(c_i,c_{i+1}) - \mathbf{1}_{\lambda_{i+1} \text{ is not overlined}},$ for $i=1,\dots,L-1,$
  \item the $s$ smallest parts are overlined, where $s \coloneqq \sum_{r=1}^{k-1} V_{2^r}$ and $V_{2^r}$ is the number of parts $\lambda_i$ such that $v(c_i)=2^r.$
\end{enumerate}
We denote $\mathfrak{S}(x_1,\dots,x_k;m,n)$ the set of all such overpartitions.
\end{definition}

Our main result can be stated as follows.

\begin{theorem}\label{thm:LV}
Let $x_1,\dots,x_k,m,n \in \mathbb{N}.$ Then,
\begin{equation}\label{eq:thm1}
\sum_{\substack{x_1, \dots, x_k, m, n \geq 0}} \overline{S}(x_1,\dots,x_k;m,n) y_1^{x_1} \cdots y_k^{x_k} d^m q^n = \frac{(-y_1 q;q)_\infty \cdots (-y_k q;q)_\infty}{(y_1 d q;q)_\infty}.
\end{equation}
\end{theorem}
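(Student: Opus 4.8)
The plan is to establish \eqref{eq:thm1} by induction on $k$, peeling off one factor $(-y_kq;q)_\infty$ at each step and absorbing it into the colored overpartitions by iterating the bijection behind Lovejoy's Theorem~\ref{thm:Lovejoy2017}(a), in exact analogy with the way Corteel and Lovejoy iterated the Alladi--Gordon bijection to pass from Theorem~\ref{thm:AlladiGordonVR} to Theorem~\ref{thm:CorteelLovejoy}. The base case $k=1$ is immediate: the only color is $1=2^0$, so $\omega(c)=1$ and $\delta\equiv 0$ everywhere while the quantity $s$ of Definition~\ref{def:Soverlined}(iv) is an empty sum, whence conditions~(i) and~(iv) are vacuous and~(iii) reads $\lambda_i-\lambda_{i+1}\ge -\mathbf{1}_{\lambda_{i+1}\text{ not overlined}}$; thus $\overline S(x_1;m,n)$ counts exactly the overpartitions of $n$ with $x_1$ parts of which $m$ are non-overlined, and the identity becomes the factorization of the overpartition generating function recalled after~\eqref{eq:gfoverpartitions}, refined so that $y_1$ marks the number of parts.

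For the inductive step, assume \eqref{eq:thm1} for $k-1$ colors and write
\[
\frac{(-y_1 q;q)_\infty\cdots(-y_k q;q)_\infty}{(y_1 d q;q)_\infty}
=\left(\frac{(-y_1 q;q)_\infty\cdots(-y_{k-1} q;q)_\infty}{(y_1 d q;q)_\infty}\right)(-y_k q;q)_\infty .
\]
By the induction hypothesis the first factor is $\sum\overline S(x_1,\dots,x_{k-1};m,n)\,y_1^{x_1}\cdots y_{k-1}^{x_{k-1}}d^m q^n$, while $(-y_k q;q)_\infty$ generates partitions into distinct parts with $y_k$ marking the number of parts. Hence it suffices to produce, for all choices of the parameters, a bijection
\[
\Phi\colon\ \bigsqcup_{n_1+n_2=n}\mathfrak S(x_1,\dots,x_{k-1};m,n_1)\times\mathcal D_{x_k}(n_2)\ \longrightarrow\ \mathfrak S(x_1,\dots,x_k;m,n),
\]
where $\mathcal D_{x_k}(n_2)$ denotes the set of partitions of $n_2$ into $x_k$ distinct parts, such that $\Phi$ preserves each of $x_1,\dots,x_k$ and $m$ (the weight $n=n_1+n_2$ being automatic); equating coefficients of $y_1^{x_1}\cdots y_k^{x_k}d^m q^n$ then closes the induction.

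To construct $\Phi$, regard $\mu\in\mathcal D_{x_k}(n_2)$ as a partition into distinct, overlined parts all carrying a fresh primary color $2^{k-1}$, larger than every primary color occurring in $\lambda\in\mathfrak S(x_1,\dots,x_{k-1};m,n_1)$, and run the insertion procedure of Lovejoy's bijective proof of Theorem~\ref{thm:Lovejoy2017}(a), \emph{relativized} so that the whole color set $\{1,\dots,2^{k-1}-1\}$ plays the role of ``color $1$'', the fresh color $2^{k-1}$ the role of ``color $2$'', and the colors $c\cup\{2^{k-1}\}$ the role of ``color $3$''. Concretely: insert the parts of $\mu$ into $\lambda$ in decreasing order and, whenever an inserted part would sit too close to a part of $\lambda$ to satisfy the bound of Definition~\ref{def:Soverlined}(iii), merge the two by adjoining $2^{k-1}$ to that part's color, shifting sizes exactly as in Lovejoy's map so that the $\omega$- and $\delta$-bookkeeping absorbs the ``$+1$'' produced by the merge, the merged part being overlined or not according to Lovejoy's rule. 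The relativization is legitimate because the algorithm inspects a part of $\lambda$ only through its size, whether it is overlined, and whether its color contains $2^0$ (the bit carrying the $d$-statistic), never through the finer structure of the aggregated color; hence it runs verbatim, and on adjacencies between two old parts it simply reproduces the configuration supplied by the induction hypothesis.

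It then remains to verify that $\Phi$ lands in $\mathfrak S(x_1,\dots,x_k;m,n)$ and is invertible, the inverse being the matching ``extraction of the color $2^{k-1}$'' map. Condition~(ii) and the preservation of $m$ are immediate from the construction, since no part gains or loses a bit $2^{i-1}$ with $i\le k-1$, each of the $x_k$ parts of $\mu$ produces exactly one part of $\Phi(\lambda,\mu)$ whose color contains $2^{k-1}$, and merging does not alter the number of non-overlined parts. Conditions~(i) and~(iii) for the image, together with the fact that $\Phi^{-1}$ returns a genuine pair in $\mathfrak S(x_1,\dots,x_{k-1};m,\cdot)\times\mathcal D_{x_k}(\cdot)$, follow from Lovejoy's local difference analysis applied to the three possible adjacency types (two old parts, an old part followed by a fresh or merged one, and a merged part on the left), together with the induction hypothesis in the old--old case. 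The step I expect to be the main obstacle is condition~(iv): one must check, along the insertion, that the prescribed initial run of overlined parts of $\lambda$ of length $s_{k-1}=\sum_{r=1}^{k-2}V_{2^r}$, combined with the overlined parts of $\mu$ and the overlining rule at merges, produces precisely an initial run of overlined parts of $\Phi(\lambda,\mu)$ of the length $s_k=\sum_{r=1}^{k-1}V_{2^r}$ computed in the image---and symmetrically for $\Phi^{-1}$. This is exactly where the asymmetry of Theorem~\ref{thm:Lovejoy2017}(a)---only $y_1$, not $y_k$, receiving a $d$ in the denominator---must be reconciled with the presence of already-existing colors that are themselves ``$d$-free'' (those not containing $2^0$); showing that the inductive form of condition~(iv) is precisely what makes this balance work is the heart of the argument.
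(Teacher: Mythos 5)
Your overall architecture — induction on $k$, the factorization of the right-hand side into $\frac{(-y_1q;q)_\infty\cdots(-y_{k-1}q;q)_\infty}{(y_1dq;q)_\infty}$ times $(-y_kq;q)_\infty$, the base case $k=1$, and the plan of building a statistics-preserving bijection $\Phi$ — agrees with the paper's. The gap is in the central claim that the insertion step ``runs verbatim'' because the algorithm ``inspects a part of $\lambda$ only through its size, whether it is overlined, and whether its color contains $2^0$, never through the finer structure of the aggregated color.'' That claim is false, and it is exactly where the paper does nontrivial new work. In the paper's Step~3, after the staircase extraction, inserting a part of color $2^{k-1}$ can violate the gap condition in a way that no mere ``merge by adjoining $2^{k-1}$'' can repair; the fix is a \emph{redistribution of colors} in the style of Corteel and Lovejoy, which explicitly splits the binary decomposition of the neighboring part's color $c_{i-1}^{(3)}$: with $j=\lambda_{i-1}^{(3)}-\lambda_i^{(3)}$, one moves the $j$ smallest powers of $2$ of $c_{i-1}^{(3)}$, together with $2^{k-1}$, onto the upper part, and leaves the remaining powers on the inserted part. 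This operation is inherently sensitive to the fine color structure and has no analogue in Lovejoy's $k=2$ map — the paper records in a remark that for $k=2$ redistributions never occur — so a ``relativized verbatim'' run of the $k=2$ bijection cannot produce it. Establishing that this redistribution is well-defined, preserves the statistics, restores the difference condition, and is invertible is the content of Lemma~\ref{lemma:redistribcolors}, Lemma~\ref{lemma:diffcondstep3}, Lemma~\ref{lemma:distinctpowersof2}, and Lemma~\ref{lemma:diffcondstep6} in the paper; your proposal supplies none of it.

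You correctly identify condition~(iv) as a delicate point, but the paper's proof of it (Lemma~\ref{lemma:fourthcond} and Lemma~\ref{lemma:MsmallerS}) again proceeds by tracking how redistributions of colors move the low bits $v(c)$ between adjacent parts — precisely the bookkeeping you have declared unnecessary. So the obstacle you single out as ``the heart of the argument'' cannot be addressed within a framework that treats the old colors as an opaque block; you need the finer, color-aware Step~3 before condition~(iv) can even be stated in the form the induction requires.
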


The proof of Theorem~\ref{thm:LV} is a bijection by induction on $k.$ As stated earlier, our approach unifies Lovejoy’s bijective proof of Theorem~\ref{thm:Lovejoy2017}~\cite{Lovejoy2017} and the iterative-bijective technique developed by Corteel and Lovejoy for the proof of Theorem~\ref{thm:CorteelLovejoy}~\cite{CorteelLovejoy2006}. Indeed, Theorem~\ref{thm:LV} reduces to
Theorem~\ref{thm:CorteelLovejoy} when $d=0$, i.e when there are no overlined parts, and it reduces to Theorem~\ref{thm:Lovejoy2017} when $k=2$.

The bijective proof of Theorem~\ref{thm:Lovejoy2017}, adapted to our notations, is reviewed in Section~\ref{subsec:caseKis2} and presented alongside an illustrative example using Young diagrams. This preparation should help the reader follow the general case -- the induction step -- proved in Section~\ref{subsec:caseKbigger3}. In that last section, we also provide an example for the general case, though without Young diagrams, as the values involved are too large.

\section{Proof of Theorem~\ref{thm:LV}}\label{sec:proofnewthm}

\subsection{Case $k=1$}\label{subsec:caseKis1}

In this section, we prove the base case $k=1$ for the inductive proof. In this setting, the statement becomes
    \[
    \sum_{\substack{x_1, m, n \geq 0}} \overline{S}(x_1;m,n) y_1^{x_1} d^m q^n = \frac{(-y_1 q;q)_\infty}{(y_1 d q;q)_\infty},
    \]
    where $\overline{S}(x_1;m,n)$ denotes the number of overpartitions $\lambda = (\lambda_1,\dots,\lambda_L)$ of $n$ in $1$ color, having $m$ non-overlined parts and satisfying the following conditions:
    \begin{enumerate}
      \renewcommand{\labelenumi}{(\roman{enumi})} 
      \item the smallest part satisfies $\lambda_L \geq 1,$ 
      \item $x_1$ parts have $1$ in their color's binary decomposition,
      \item $\lambda_i-\lambda_{i+1} \geq 1 - \mathbf{1}_{\lambda_{i+1} \text{ is not overlined}},$ for $i=1,\dots,L-1,$
      \item the $s$ smallest parts are overlined, where $s=0.$
    \end{enumerate}
    In this case, $\overline{S}(x_1;m,n)$ is exactly the number of overpartitions of $n$ in $1$ color, having $x_1$ parts and $m$ non-overlined parts. Indeed, the four conditions are always true for any overpartitions. This is clear for (i), (ii), and (iv). For (iii), we have two cases:
    \begin{enumerate}
    \item If $\lambda_{i+1}$ is overlined, then $\lambda_i > \lambda_{i+1}$ by definition of an overpartition, and $\lambda_i-\lambda_{i+1} \geq 1=1 - \mathbf{1}_{\lambda_{i+1} \text{ is not overlined}}.$
    \item If $\lambda_{i+1}$ is not overlined, then $\lambda_i \geq \lambda_{i+1}$ by definition of an overpartition, and $\lambda_i-\lambda_{i+1} \geq 0=1 - \mathbf{1}_{\lambda_{i+1} \text{ is not overlined}}.$
    \end{enumerate}
    Multiplying $\overline{S}(x_1;m,n)$ by $y_1^{x_1} d^m q^n$ and summing over $x_1, m, n,$ we get the generating function for overpartitions in 1 color (see \eqref{eq:gfoverpartitions}), which is exactly the right-hand side of our statement in the case $k=1.$

\subsection{Case $k=2$}\label{subsec:caseKis2}

When $k=2,$ Theorem~\ref{thm:LV} reduces to the first part of Theorem~\ref{thm:Lovejoy2017}. Since our generalization is partly based on this result, and to help the reader navigate the numerous notations, we present here an adaptation of Lovejoy’s proof~\cite{Lovejoy2017} to our notations, enhanced with additional details and an illustrative example for clarity.

\medskip

Let $x_1,x_2,m,n\in\mathbb{N}.$ Consider $\lambda$ an overpartition with $x_1$ parts all in color $1$ and $\mu$ a partition into $x_2$ distinct parts all in color $2.$
\begin{example}
    
        We will use the colors $1=$ \textcolor{Goldenrod}{yellow}, $2=$ \textcolor{Aquamarine}{blue}, $3=$ \textcolor{LimeGreen}{green}.
        
        \smallskip
        
        \noindent Let $\lambda=(\overline{8}_1,\overline{6}_1,6_1,\overline{4}_1,3_1,1_1)$ be an overpartition in color $1$ and $\mu=(8_2,7_2,3_2,1_2)$ a partition into distinct parts in color $2$:
        \[
        \ytableausetup{centertableaux}
        \begin{tikzpicture}[baseline=(current bounding box.center)]
        \node (first) at (0,0) {
        \begin{YD}
            \boxrow{8/Goldenrod/\bullet}
            \boxrow{6/Goldenrod/\bullet}
            \boxrow{6/Goldenrod/}
            \boxrow{4/Goldenrod/\bullet}
            \boxrow{3/Goldenrod/}
            \boxrow{1/Goldenrod/}
        \end{YD}
        };
    
        \node (second) at (6,0) {
        \begin{YD}
            \boxrow{8/Aquamarine/}
            \boxrow{7/Aquamarine/}
            \boxrow{3/Aquamarine/}
            \boxrow{1/Aquamarine/}
        \end{YD}
        };
        \end{tikzpicture}
        \]
\end{example}

\begin{enumerate}[label=\textbf{Step \arabic*:}]
    \item Let $L$ denote the number of parts in $\lambda.$ For each part $p$ of $\mu$ such that $p\leq L,$ add $1$ to the first $p$ parts of $\lambda$ and change the color of $\lambda_p$ to $3$ if the original color was $1.$ This gives a new overpartition $\lambda^{(1)}.$ Keeping only the unused parts of $\mu,$ we also get a new partition into distinct parts, denoted $\mu^{(1)}.$
\begin{example}
    
        Using the partitions defined in the previous example, we apply Step 1. First, we find the parts in $\mu$ that are smaller or equal to the number of parts in $\lambda,$ i.e $p\leq6$:
        \[
        \ytableausetup{centertableaux}
        \begin{tikzpicture}[baseline=(current bounding box.center)]
        \node (first) at (0,0) {
        \begin{YD}
            \boxrow{8/Goldenrod/\bullet}
            \boxrow{6/Goldenrod/\bullet}
            \boxrow{6/Goldenrod/}
            \boxrow{4/Goldenrod/\bullet}
            \boxrow{3/Goldenrod/}
            \boxrow{1/Goldenrod/}
        \end{YD}
        };
    
        \node (second) at (6,0) {
        \begin{YD}
            \boxrow{8/Aquamarine/}
            \boxrow{7/Aquamarine/}
            \boxrow{2/Aquamarine/,1/Aquamarine/\ast}
            \boxrow{1/Aquamarine/\ast}
        \end{YD}
        };

        \draw[->, thick, red] (6.55,-0.3) -- (5.75,-0.3); 
        \draw[->, thick, red] (5.5,-0.85) -- (4.7,-0.85); 

        \draw[thick,red] (-2.2, 1.7) -- (-2.2, -1.7) node[midway, left] {\large 6};
        \draw[thick,red] (-2.2, 1.7) -- (-2.1, 1.7);
        \draw[thick,red] (-2.2, -1.7) -- (-2.1, -1.7);
    
        \end{tikzpicture}
        \]
        We transpose those blue parts smaller than $6$:
        \[
        \ytableausetup{centertableaux}
        \begin{tikzpicture}[baseline=(current bounding box.center)]
        \node (first) at (0,0) {
        \begin{YD}
            \boxrow{8/Goldenrod/\bullet}
            \boxrow{6/Goldenrod/\bullet}
            \boxrow{6/Goldenrod/}
            \boxrow{4/Goldenrod/\bullet}
            \boxrow{3/Goldenrod/}
            \boxrow{1/Goldenrod/}
        \end{YD}
        };

        \node (second) at (7,0) {
        \begin{YD}
            \boxrow{1/Aquamarine/}
            \boxrow{1/Aquamarine/}
            \boxrow{1/Aquamarine/\ast}
        \end{YD}
        +
        \begin{YD}
            \boxrow{1/Aquamarine/\ast}
        \end{YD}
        +
        \begin{YD}
            \boxrow{8/Aquamarine/}
            \boxrow{7/Aquamarine/}
        \end{YD}
        };
        \end{tikzpicture}
        \]
        We add them to the top right of $\lambda$:
        \[
        \ytableausetup{centertableaux}
        \begin{tikzpicture}[baseline=(current bounding box.center)]
        \node (first) at (0,0) {
        \begin{YD}
            \boxrow{8/Goldenrod/\bullet,1/Aquamarine/,1/Aquamarine/\ast}        \boxrow{6/Goldenrod/\bullet,1/Aquamarine/}            \boxrow{6/Goldenrod/,1/Aquamarine/\ast}
            \boxrow{4/Goldenrod/\bullet}
            \boxrow{3/Goldenrod/}
            \boxrow{1/Goldenrod/}
        \end{YD}
        };
    
        \node (second) at (6,0) {
        \begin{YD}
            \boxrow{8/Aquamarine/}
            \boxrow{7/Aquamarine/}
        \end{YD}
        };
        \end{tikzpicture}
        \]
        We extend the yellow lines and change the color of the lines ending with $\ast$ to the color green:
        \[
        \ytableausetup{centertableaux}
        \begin{tikzpicture}[baseline=(current bounding box.center)]
        \node (first) at (0,0) {
        \begin{YD}
            \boxrow{10/LimeGreen/\bullet}
            \boxrow{7/Goldenrod/\bullet}            \boxrow{7/LimeGreen/}
            \boxrow{4/Goldenrod/\bullet}
            \boxrow{3/Goldenrod/}
            \boxrow{1/Goldenrod/}
        \end{YD}
        };
    
        \node (second) at (6,0) {
        \begin{YD}
            \boxrow{8/Aquamarine/}
            \boxrow{7/Aquamarine/}
        \end{YD}
        };
        \end{tikzpicture}
        \]
        We have obtained:
        \[
        \lambda^{(1)}=(\overline{10}_3,\overline{7}_1,7_3,\overline{4}_1,3_1,1_1) \quad\text{and}\quad \mu^{(1)}=(8_2,7_2).
        \]
\end{example}

\item Let $M$ denote the number of parts in the partition $\mu^{(1)}.$ From $\mu^{(1)},$ get $\mu^{(2)}$ by taking away a (colorless) staircase of size $(\,L+M-1,\,L+M-2,\dots,\,L\,).$ This is possible, since the smallest part in $\mu^{(1)}$ is bigger than $L$ and all its parts are distinct. From $\lambda^{(1)},$ get a partition $\lambda^{(2)}$ that isn't overlined anymore by taking away a (colorless) \emph{generalized staircase}, i.e for each overlined part in $\lambda^{(1)},$ remove the overline and remove $1$ from every part above this one. These $1$'s are moved to the generalized staircase as a new part.
\begin{remark}
A generalized staircase can have a part of size $0$ (e.g. if the largest part in $\lambda^{(1)}$ is overlined).
\end{remark}
Since the largest part of the generalized staircase is at most $L-1,$ combine it with the staircase $(\,L+M-1,\,L+M-2,\dots,\,L\,).$ This gives a \emph{partial staircase}, which will be denoted $\nu.$
\begin{example}
    
        We apply Step 2 to the partitions obtained after Step 1 in the previous example:
        \[
        \ytableausetup{centertableaux}
        \begin{tikzpicture}[baseline=(current bounding box.center)]
        \node (first) at (0,0) {
        \begin{YD}
            \boxrow{10/LimeGreen/\bullet}
            \boxrow{7/Goldenrod/\bullet}            \boxrow{7/LimeGreen/}
            \boxrow{4/Goldenrod/\bullet}
            \boxrow{3/Goldenrod/}
            \boxrow{1/Goldenrod/}
        \end{YD}
        };
    
        \node (second) at (6,0) {
        \begin{YD}
            \boxrow{8/Aquamarine/}
            \boxrow{7/Aquamarine/}
        \end{YD}
        };
        \end{tikzpicture}
        \]
        First, we extract step by step a generalized staircase from $\lambda^{(1)}$:
        \begin{itemize}
            \item[--] The first part is overlined. Since there are no parts above this one, we put a part of size $0$ in the generalized staircase. It is represented by a vertical line. The overline is also removed from the first part:
            \[
            \ytableausetup{centertableaux}
            \begin{tikzpicture}[baseline=(current bounding box.center)]
            \node (first) at (0,0) {
            \begin{YD}
                \boxrow{10/LimeGreen/}
                \boxrow{7/Goldenrod/\bullet}            \boxrow{7/LimeGreen/}
                \boxrow{4/Goldenrod/\bullet}
                \boxrow{3/Goldenrod/}
                \boxrow{1/Goldenrod/}
            \end{YD}
            };
        
            \node (second) at (6,0) {
            \begin{YD}
                \boxrow{}
            \end{YD}
            };
            \end{tikzpicture}
            \]
            
            \item[--] The second part is overlined, which means we extract $1$ from the first part and move it to the generalized staircase:
            \[
            \ytableausetup{centertableaux}
            \begin{tikzpicture}[baseline=(current bounding box.center)]
            \node (first) at (0,0) {
            \begin{YD}
                \boxrow{9/LimeGreen/,1/LimeGreen/\textcolor{red}{\times}}
                \boxrow{7/Goldenrod/\bullet}            \boxrow{7/LimeGreen/}
                \boxrow{4/Goldenrod/\bullet}
                \boxrow{3/Goldenrod/}
                \boxrow{1/Goldenrod/}
            \end{YD}
            };
        
            \node (second) at (6,0) {
            \begin{YD}
                \boxrow{}
            \end{YD}
            };
            
            \draw[->, thick, red] (2.9,1) -- (5.85,0.3); 
            
            \end{tikzpicture}
            \]
            And the overline is removed from the second part:
            \[
            \ytableausetup{centertableaux}
            \begin{tikzpicture}[baseline=(current bounding box.center)]
            \node (first) at (0,0) {
            \begin{YD}
                \boxrow{9/LimeGreen/}
                \boxrow{7/Goldenrod/}            \boxrow{7/LimeGreen/}
                \boxrow{4/Goldenrod/\bullet}
                \boxrow{3/Goldenrod/}
                \boxrow{1/Goldenrod/}
            \end{YD}
            };
        
            \node (second) at (6,0) {
            \begin{YD}
                \boxrow{1/LimeGreen/}
                \boxrow{}
            \end{YD}
            };
            \end{tikzpicture}
            \]
            
            \item[--] The fourth part is overlined, which means we extract $1$ from each of the first three parts and move them to the generalized staircase, as a new part:
            \[
            \ytableausetup{centertableaux}
            \begin{tikzpicture}[baseline=(current bounding box.center)]
            \node (first) at (0,0) {
            \begin{YD}
                \boxrow{8/LimeGreen/,1/LimeGreen/\textcolor{red}{\times}}
                \boxrow{6/Goldenrod/,1/Goldenrod/\textcolor{red}{\times}}         \boxrow{6/LimeGreen/,1/LimeGreen/\textcolor{red}{\times}}
                \boxrow{4/Goldenrod/\bullet}
                \boxrow{3/Goldenrod/}
                \boxrow{1/Goldenrod/}
            \end{YD}
            };
        
            \node (second) at (6,0) {
           \begin{YD}
                \boxrow{1/LimeGreen/}
                \boxrow{}
            \end{YD}
            };
            
            \draw[->, thick, red] (1.75,0.7) -- (5.5,0.68); 
            
            \end{tikzpicture}
            \]
            And the overline is removed from the fourth part:
            \[
            \ytableausetup{centertableaux}
            \begin{tikzpicture}[baseline=(current bounding box.center)]
            \node (first) at (0,0) {
            \begin{YD}
                \boxrow{8/LimeGreen/}
                \boxrow{6/Goldenrod/}            \boxrow{6/LimeGreen/}
                \boxrow{4/Goldenrod/}
                \boxrow{3/Goldenrod/}
                \boxrow{1/Goldenrod/}
            \end{YD}
            };
        
            \node (second) at (6,0) {
            \begin{YD}
                \boxrow{1/LimeGreen/,1/Goldenrod/,1/LimeGreen/}
                \boxrow{1/LimeGreen/}
                \boxrow{}
            \end{YD}
            };
            \end{tikzpicture}
            \]
            
            \item[--] There is actually no color in the generalized staircase. The colors were put only for better understanding of the procedure. Therefore, we can remove the colors:
            \[
            \ytableausetup{centertableaux}
            \begin{tikzpicture}[baseline=(current bounding box.center)]
            \node (first) at (0,0) {
            \begin{YD}
                \boxrow{8/LimeGreen/}
                \boxrow{6/Goldenrod/}            \boxrow{6/LimeGreen/}
                \boxrow{4/Goldenrod/}
                \boxrow{3/Goldenrod/}
                \boxrow{1/Goldenrod/}
            \end{YD}
            };
        
            \node (second) at (6,0) {
            \begin{YD}
                \boxrow{3/white/}
                \boxrow{1/white/}
                \boxrow{}
            \end{YD}
            };
            \end{tikzpicture}
            \]
        \end{itemize}
        Now, we extract from $\mu^{(1)}$ a staircase $(7,6)$:
        \[
        \ytableausetup{centertableaux}
        \begin{tikzpicture}[baseline=(current bounding box.center)]
        \node (first) at (0,0) {
        \begin{YD}
            \boxrow{8/Aquamarine/}
            \boxrow{7/Aquamarine/}
        \end{YD}
        =
        \begin{YD}
            \boxrow{1/Aquamarine/}
            \boxrow{1/Aquamarine/}
        \end{YD}
        +
        \begin{YD}
            \boxrow{7/white/}
            \boxrow{6/white/}
        \end{YD}
        };
        \end{tikzpicture}
        \]
        
        Combining the generalized staircase and the staircase $(7,6)$ in one partial staircase $\nu,$ we get $\lambda^{(2)}=(8_3,6_1,6_3,4_1,3_1,1_1),\,\mu^{(2)}=(1_2,1_2),\,\nu=(7,6,3,1,0)$:
        \[
        \ytableausetup{centertableaux}
        \begin{tikzpicture}[baseline=(current bounding box.center)]
        \node (first) at (0,0) {
            \begin{YD}
                \boxrow{8/LimeGreen/}
                \boxrow{6/Goldenrod/}            \boxrow{6/LimeGreen/}
                \boxrow{4/Goldenrod/}
                \boxrow{3/Goldenrod/}
                \boxrow{1/Goldenrod/}
            \end{YD}
        };
    
        \node (second) at (4,0) {
            \begin{YD}
                \boxrow{1/Aquamarine/}
                \boxrow{1/Aquamarine/}
            \end{YD}
        };
    
        \node (third) at (8,0) {
            \begin{YD}
                \boxrow{7/white/}
                \boxrow{6/white/}
                \boxrow{3/white/}
                \boxrow{1/white/}
                \boxrow{}
            \end{YD}
        };
        
        \end{tikzpicture}
        \]
    \end{example}
    
    \item Insert into $\lambda^{(2)}$ each part of $\mu^{(2)}$, starting from the largest, so that it becomes the first occurrence of its value in $\lambda^{(2)}.$ The partition obtained after completing all insertions is denoted $\lambda^{(3)}.$

\begin{example}
    
        We apply Step 3 to the partitions obtained in the previous example:
        \[
        \ytableausetup{centertableaux}
        \begin{tikzpicture}[baseline=(current bounding box.center)]
        \node (first) at (0,0) {
            \begin{YD}
                \boxrow{8/LimeGreen/}
                \boxrow{6/Goldenrod/}            \boxrow{6/LimeGreen/}
                \boxrow{4/Goldenrod/}
                \boxrow{3/Goldenrod/}
                \boxrow{1/Goldenrod/}
            \end{YD}
        };
    
        \node (second) at (4,0) {
            \begin{YD}
                \boxrow{1/Aquamarine/}
                \boxrow{1/Aquamarine/}
            \end{YD}
        };
    
        \node (third) at (8,0) {
            \begin{YD}
                \boxrow{7/white/}
                \boxrow{6/white/}
                \boxrow{3/white/}
                \boxrow{1/white/}
                \boxrow{}
            \end{YD}
        };
        
        \end{tikzpicture}
        \]
        
        We insert all parts of $\mu^{(2)}$ in $\lambda^{(2)},$ so that they become the first occurence occurence of their value in $\lambda^{(2)}$:
        \[
        \ytableausetup{centertableaux}
        \begin{tikzpicture}[baseline=(current bounding box.center)]
        \node (first) at (0,0) {
            \begin{YD}
                \boxrow{8/LimeGreen/}
                \boxrow{6/Goldenrod/}            \boxrow{6/LimeGreen/}
                \boxrow{4/Goldenrod/}
                \boxrow{3/Goldenrod/}
                \boxrow{1/Aquamarine/}
                \boxrow{1/Aquamarine/}
                \boxrow{1/Goldenrod/}
            \end{YD}
        };
    
        \node (second) at (6,0) {
            \begin{YD}
                \boxrow{7/white/}
                \boxrow{6/white/}
                \boxrow{3/white/}
                \boxrow{1/white/}
                \boxrow{}
            \end{YD}
        };
        
        \end{tikzpicture}
        \]
We get $\lambda^{(3)}=(8_3,6_1,6_3,4_1,3_1,1_2,1_2,1_1).$
    \end{example}
    
    \item Add the partial staircase $\nu$ back to $\lambda^{(3)}.$ To do so, for each part $p$ of $\nu,$ add $1$ to the first $p$ parts of $\lambda^{(3)}$ and overline the ($p+1$)-th part. This gives $\lambda^{(4)}.$
\begin{example}
    
        We apply Step 4 to the partitions obtained previously:
            \[
        \ytableausetup{centertableaux}
        \begin{tikzpicture}[baseline=(current bounding box.center)]
        \node (first) at (0,0) {
            \begin{YD}
                \boxrow{8/LimeGreen/}
                \boxrow{6/Goldenrod/}            \boxrow{6/LimeGreen/}
                \boxrow{4/Goldenrod/}
                \boxrow{3/Goldenrod/}
                \boxrow{1/Aquamarine/}
                \boxrow{1/Aquamarine/}
                \boxrow{1/Goldenrod/}
            \end{YD}
        };
    
        \node (second) at (6,0) {
            \begin{YD}
                \boxrow{7/white/}
                \boxrow{6/white/}
                \boxrow{3/white/}
                \boxrow{1/white/}
                \boxrow{}
            \end{YD}
        };
        
        \end{tikzpicture}
        \]
       We reinsert the partial staircase $\nu$ in $\lambda^{(3)}.$ To do so, we write $\ast$ in the last box of each row in $\nu$:
       \[
        \ytableausetup{centertableaux}
        \begin{tikzpicture}[baseline=(current bounding box.center)]
        \node (first) at (0,0) {
            \begin{YD}
                \boxrow{8/LimeGreen/}
                \boxrow{6/Goldenrod/}            \boxrow{6/LimeGreen/}
                \boxrow{4/Goldenrod/}
                \boxrow{3/Goldenrod/}
                \boxrow{1/Aquamarine/}
                \boxrow{1/Aquamarine/}
                \boxrow{1/Goldenrod/}
            \end{YD}
        };
    
        \node (second) at (6,0) {
            \begin{YD}                \boxrow{6/white/,1/white/\ast}                \boxrow{5/white/,1/white/\ast}                \boxrow{2/white/,1/white/\ast}
                \boxrow{1/white/\ast}
                \boxrowa{}
            \end{YD}
        };
        
        \end{tikzpicture}
        \]

        We transpose $\nu$:
        \[
        \ytableausetup{centertableaux}
        \begin{tikzpicture}[baseline=(current bounding box.center)]
        \node (first) at (0,0) {
            \begin{YD}
                \boxrow{8/LimeGreen/}
                \boxrow{6/Goldenrod/}            \boxrow{6/LimeGreen/}
                \boxrow{4/Goldenrod/}
                \boxrow{3/Goldenrod/}
                \boxrow{1/Aquamarine/}
                \boxrow{1/Aquamarine/}
                \boxrow{1/Goldenrod/}
            \end{YD}
        };
    
        \node (second) at (6,0) {
            \begin{YD}                \boxrowta{3/white/,1/white/\ast}
            \boxrow{3/white/}
            \boxrow{2/white/,1/white/\ast}
            \boxrow{2/white/}
            \boxrow{2/white/}
            \boxrow{1/white/,1/white/\ast}
            \boxrow{1/white/\ast}          
            \end{YD}
        };
        
        \end{tikzpicture}
        \]
        We add it to the top right of $\lambda^{(3)}$:
        \[
        \ytableausetup{centertableaux}
        \begin{tikzpicture}[baseline=(current bounding box.center)]
        \node (first) {
            \begin{YD}                \boxrowta{8/LimeGreen/,3/white/,1/white/\ast}         \boxrow{6/Goldenrod/,3/white/}            \boxrow{6/LimeGreen/,2/white/,1/white/\ast}           \boxrow{4/Goldenrod/,2/white/}               \boxrow{3/Goldenrod/,2/white/}                \boxrow{1/Aquamarine/,1/white/,1/white/\ast}             \boxrow{1/Aquamarine/,1/white/\ast}
                \boxrow{1/Goldenrod/}
            \end{YD}
        };
        \end{tikzpicture}
        \]
        
        \begin{remark}
            Adding $0$ does not change the value of the parts in $\lambda^{(3)},$ so we do not have to draw it. However, since there is $\ast$ on the part of size $0,$ we draw it to remember to overline $\lambda_1^{(3)}$ (see below).
        \end{remark}
        
        Finally, we extend the color of each row, we overline each part that is immediately below a part ending with $\ast$ , and we remove those $\ast$ :
        \[
        \ytableausetup{centertableaux}
        \begin{tikzpicture}[baseline=(current bounding box.center)]
        \node (first) {
            \begin{YD}                \boxrow{12/LimeGreen/\bullet}\boxrow{9/Goldenrod/\bullet}   \boxrow{9/LimeGreen/}     \boxrow{6/Goldenrod/\bullet}   \boxrow{5/Goldenrod/}          \boxrow{3/Aquamarine/}            \boxrow{2/Aquamarine/\bullet}     \boxrow{1/Goldenrod/\bullet}
            \end{YD}
        };
        \end{tikzpicture}
        \]
We get the overpartition $\lambda^{(4)} = (\overline{12}_3,\overline{9}_1,9_3,\overline{6}_1,5_1,3_2,\overline{2}_2,\overline{1}_1).$
\end{example}
    
\end{enumerate}
\begin{proposition}
The overpartition \(\lambda^{(4)}\) belongs to \(\mathfrak{S}(x_1,x_2;m,n)\).
\end{proposition}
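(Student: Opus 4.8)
The plan is to verify, one at a time, the four defining conditions of $\mathfrak{S}(x_1,x_2;m,n)$, together with the underlying requirement that $\lambda^{(4)}$ be an overpartition of $n$ in $3$ colours with $m$ non-overlined parts, by tracking how each of Steps~1--4 acts on the relevant statistics. I would begin with the \emph{conservation laws}. The total weight $n$ is preserved because every step merely redistributes boxes: Step~1 transposes the small parts of $\mu$ onto the top right of $\lambda$, Step~2 removes from $\lambda^{(1)}$ and $\mu^{(1)}$ exactly the boxes that form $\nu$, and Steps~3--4 put $\mu^{(2)}$ and then $\nu$ back. For the colour counts (condition~(ii)): the $L=x_1$ original parts of $\lambda$ are never deleted and always retain the primary colour $2^0$, while each of the $x_2$ parts of $\mu$ either is absorbed in Step~1 into some $\lambda_p$ — upgrading its colour from $1$ to $3$, hence giving it the primary colour $2^1$ — or survives as a part of $\mu^{(1)}$ and is reinserted in Step~3 with colour $2$; distinctness of the parts of $\mu$ guarantees that distinct $\mu$-parts land on distinct positions, so nothing is double-counted and the final counts are exactly $x_1$ and $x_2$. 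Finally, Step~2 strips \emph{all} overlines, Step~3 introduces no overlines, and Step~4 overlines exactly $|\nu| = (\#\text{ overlined parts of }\lambda) + M$ positions, where $M$ denotes the number of parts of $\mu$ exceeding $L$; since $\lambda^{(4)}$ has $L+M$ parts, the number of non-overlined parts is $L - (\#\text{ overlined parts of }\lambda) = m$.

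Next I would dispose of conditions (i) and (iv), which are comparatively clean. In Step~4 the $M$ largest parts of $\nu$ are the staircase $(L+M-1,\dots,L)$, all of size $\ge L$; since a part $p$ of $\nu$ overlines the $(p+1)$-th part, these staircase parts overline precisely positions $L+1,\dots,L+M$ — the bottom $M$ positions of $\lambda^{(3)}$ — whereas the remaining (generalized-staircase) parts of $\nu$, all of size $\le L-1$, overline only positions $\le L$. Because position $L$ receives $M$ extra boxes in Step~4 and position $L+1$ receives $M-1$, the bottom $M$ parts stay strictly below all the others, so they are the $M$ smallest parts of $\lambda^{(4)}$; as $M = V_2$ (the colour-$2$ parts being exactly the reinserted parts of $\mu^{(2)}$), this is condition~(iv). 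For (i), the smallest part of $\lambda^{(4)}$ coincides with the smallest part of $\lambda^{(3)}$ (it gains nothing in Step~4), which is a positive integer; the only way its colour is $3$ is that the position $L$ was the one absorbed in Step~1, in which case that part was already raised to size $\ge 2 = \omega(3)$ and is not decreased afterwards, so $\lambda^{(4)}_L \ge \omega(c_L)$ in every case.

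The heart of the argument is condition (iii), and I would establish it by following the difference condition through the steps. After Step~1, $\lambda^{(1)}$ is an overpartition in colours $\{1,3\}$ satisfying (i) and the $\{1,3\}$-restriction of (iii): absorbing a part $p$ of $\mu$ raises $\lambda_1,\dots,\lambda_p$ by $1$, which opens up exactly the extra gap below position $p$ needed to upgrade $c_p$ to colour $3$ (for which $\omega = 2$), and distinctness of the parts of $\mu$ prevents that gap from being consumed twice. After Step~2, $\lambda^{(2)}$ is an (overline-free) partition satisfying the shifted inequality $\lambda^{(2)}_i-\lambda^{(2)}_{i+1} \ge \omega(c_i)+\delta(c_i,c_{i+1})-1$, because peeling off the generalized staircase — removing one box from every part lying above an overlined part — is the exact inverse of the overlining performed in Step~4; simultaneously one records that $\mu^{(2)}$ is a list of distinct colour-$2$ parts, each large enough to be inserted as the first occurrence of its value. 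Step~3 keeps the shifted inequality: inserting a colour-$2$ part $v$ among the non-overlined parts at the first occurrence of value $v$ is compatible with the required gaps, since a colour-$1$ part immediately above a colour-$2$ part needs a gap of only $\omega(1)+\delta(1,2)-1 = 1$ and a colour-$2$ part above anything needs $\omega(2)-1 = 0$. Finally, Step~4 adds $\nu$ back and overlines the $(p+1)$-th part for each part $p$ of $\nu$: around each newly overlined position one checks that the gap just above it grows by exactly $1$ — compensating for the loss of the $-\mathbf{1}$ term when that part becomes overlined — while the gap just below is unchanged, and that at the remaining (non-overlined) positions the difference can only decrease relative to $\lambda^{(3)}$ within the allowed margin, precisely because $\nu$ is a \emph{partial} staircase (it may skip values). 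Along the way one also checks that $\lambda^{(4)}$ stays weakly decreasing, which together with (iii) shows it is a genuine overpartition.

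The main obstacle I anticipate is the bookkeeping in Step~4: reintroducing the overlines while adding the partial staircase, and verifying (iii) at and around every newly overlined position — especially when that position falls between parts of different colours, so that a $\delta$-term appears and must be balanced against the box just added, and when the gaps left by the \emph{skipped} values of $\nu$ have to be shown not to fall short. A secondary subtlety is guaranteeing that condition~(ii), and the colour part of (i), survive the boundary cases in Step~1 — for instance when a part of $\mu$ equals $L$ exactly (so the smallest part $\lambda_L$ is the one absorbed and becomes colour $3$), or when several parts of $\mu$ pile increments onto the same initial segment of $\lambda$.
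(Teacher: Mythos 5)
Your proposal is correct and follows essentially the same plan as the paper: verify the overpartition structure, weight, color counts, smallest-part condition, gap condition (iii) by propagating a suitable inequality through Steps 1--4, and the overlining condition (iv) via the staircase $(L+M-1,\dots,L)$ inside $\nu$. One small imprecision: at non-overlined positions in Step 4 the difference $\lambda_i^{(4)}-\lambda_{i+1}^{(4)}$ does not ``decrease'' but is unchanged from $\lambda^{(3)}$ (each position gets the same number of added boxes), which is what matches the required bound $\omega(c_i)+\delta(c_i,c_{i+1})-1$; the paper also replaces your check of condition (i) by the equivalent explicit check that no part $1_3$ or $\overline{1}_3$ occurs.
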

\begin{proof}\ \\
\begin{enumerate}
            \item[] \hl{It is an overpartition:}
            \;True, since a partial staircase is added in the last step.
            \item[] \hl{There are no parts $1_3$ or $\overline{1}_{3}$:}
            \;Indeed, $\lambda_L^{(2)}$ cannot be $1_3$ (or $\overline{1}_{3}$) since it is untouched by Step 2 and the color $3$ comes with adding $1$ to the size of the part in Step 1. Moreover, $\lambda_{L-1}^{(2)}$ cannot be $1_3$ (or $\overline{1}_{3}$), since it would mean that either $\lambda_{L-1}^{(1)}$ is already $1_3$ (or $\overline{1}_{3}$) (which is impossible for the same reason as for the smallest part), or that $\lambda_{L-1}^{(1)}=2_{3}$ (or $\overline{2}_3$) and $\lambda_L^{(1)}$ is overlined (which is impossible since an overlined part is the first occurence of its size, so $\lambda_{L-1}$ could not have been of size $1$ before Step 1). The same reasoning can be done for the other parts. Furthermore, the insertion in Step 3 does not change the value or color of the parts in $\lambda^{(2)}.$ Only parts from $\mu^{(2)}$ are added, and those parts have color $2,$ so there is no problem here. Finally, adding the partial staircase in Step 4 only enlarges the parts of $\lambda^{(3)},$ so we are done.
            \item[] \hl{There are $x_1$ parts that have $1$ in their color's binary decomposition and $x_2$ parts that have $2$ in their color's binary decomposition:}
    \;Indeed, color $1$ is the original color of $\lambda,$ thus the parts from $\lambda$ who kept their original color through all the steps are still counted by $x_1$ at the end. Color $2$ comes from putting the remaining parts from $\mu^{(2)}$ into $\lambda^{(2)}$ in Step 3, and the color of those parts remains untouched during the complete procedure, so they are still counted by $x_2.$ Finally, parts in the color $3$ come from adding $2$ to the original color of some parts of $\lambda$ during Step 1. Therefore, they are counted by both $x_1$ and $x_2,$ since they each come from a part of $\mu$ added to the side of $\lambda$ and a part of $\lambda$ whose color is changed.
            \item[] \hl{For all i, $\lambda_i^{(4)}-\lambda_{i+1}^{(4)} \geq \omega(c_i^{(4)}) + \delta(c_i^{(4)},c_{i+1}^{(4)}) - \mathbf{1}_{\lambda_{i+1}^{(4)} \text{ is not overlined}}$:}
   \;At the beginning, $\lambda$ is an overpartition, so it satisfies the following gap conditions:
\[
\lambda_i-\lambda_{i+1} \geq 1 - \mathbf{1}_{\lambda_{i+1} \text{ is not overlined}},
\]
for all $i.$
\noindent After Step 1, $\lambda^{(1)}$ satisfies
    \[
    \lambda_i^{(1)}-\lambda_{i+1}^{(1)} \geq \omega(c_i^{(1)}) - \mathbf{1}_{\lambda_{i+1}^{(1)} \text{ is not overlined}},
    \]
for all $i.$ Indeed, if $c_i^{(1)}=3$ then $\lambda_i^{(1)}-\lambda_{i+1}^{(1)}=\lambda_i-\lambda_{i+1}+1.$
\noindent After Step 2, $\lambda^{(2)}$ satisfies
    \[
    \lambda_i^{(2)}-\lambda_{i+1}^{(2)} \geq \omega(c_i^{(2)}) - 1,
    \]
for all $i,$ since removing the partial staircase makes the gap imposed by the overline disappear.
\noindent After Step 3, $\lambda^{(3)}$ satisfies
    \[
    \lambda_i^{(3)}-\lambda_{i+1}^{(3)} \geq
\begin{cases}
0 &,\text{ if }c_i^{(3)}=c_{i+1}^{(3)}=2, \\ 
1 &,\text{ if }c_i^{(3)}\neq2\text{ and }c_{i+1}^{(3)}=2, \\
\omega(c_i^{(3)})-1 &,\text{ otherwise}.
\end{cases}
    \]
for all $i.$ Note that this is equivalent to the following:
    \[
    \lambda_i^{(3)}-\lambda_{i+1}^{(3)} \geq \omega(c_i^{(3)}) + \delta(c_i^{(3)},c_{i+1}^{(3)}) -1,
    \]
for all $i.$
\noindent After Step 4, $\lambda^{(4)}$ satisfies
\[
\lambda_i^{(4)}-\lambda_{i+1}^{(4)} \geq \omega(c_i^{(4)}) + \delta(c_i^{(4)}, c_{i+1}^{(4)}) - \mathbf{1}_{\lambda_{i+1}^{(4)} \text{ is not overlined}},
\]
for all $i,$ since adding the partial staircase to $\lambda^{(3)}$ imposes a gap between $\lambda_i^{(4)}$ and $\lambda_{i+1}^{(4)}$ when $\lambda_{i+1}^{(4)}$ is overlined. 

\noindent Therefore, condition (iii) is true for $\lambda^{(4)}.$
            \item[] \hl{The $s$ smallest parts are overlined, where $s \coloneqq V^{(4)}_{2}$:}
    \;Indeed, at the end of Step 4, the $M$ smallest parts of $\lambda^{(4)}$ are overlined, since $\nu$ is partly made of a staircase $(\,L+M-1,\,L+M-2,\dots,\,L\,)$ and $\lambda^{(4)}$ has $L+M$ parts.
\end{enumerate}
\end{proof}
Each of the four steps is a one-to-one correspondence\footnote{For details on the reversibility of the four steps, the reader is refered to the general case in Section~\ref{subsec:caseKbigger3}.}, which completes the proof of Theorem~\ref{thm:LV} in the case $k=2.$

\subsection{Case $k\geq3$}\label{subsec:caseKbigger3}

With the base case established, we now prove the induction step. The case $k=2$ lays the groundwork for the bijection in the general case, and therefore the general bijection is also divided into four steps.

\medskip

Suppose the statement true for a given natural number $k-1.$ Let us prove it for $k.$ \\
    The right-hand side of \eqref{eq:thm1} can be seen as the product of two generating functions:
    \[
    \frac{(-y_1 q;q)_\infty \cdots (-y_{k-1} q;q)_\infty}{(y_1 d q;q)_\infty} \text{\quad and\quad} (-y_k q;q)_\infty.
    \]
    By the induction hypothesis, the first one is the generating function for overpartitions in $\mathfrak{S}(x_1,\dots,x_{k-1};m,n).$ The second one is the generating function for partitions into distinct parts in the color $2^{k-1}.$
Let $x_1,\dots,x_k,m,n,\tilde{n} \in \mathbb{N}, \;\tilde{n} \leq n.$ Consider an overpartition $\lambda$ of $\tilde{n}$ in $\mathfrak{S}(x_1,\dots,x_k;m,\tilde{n})$ and a partition $\mu$ of $n-\tilde{n}$ into $x_k$ distinct parts in the color $2^{k-1}.$

\begin{example}
Consider $\lambda$ to be the partition $(\overline{12}_3,\overline{9}_1,9_3,\overline{6}_1,5_1,3_2,\overline{2}_2,\overline{1}_1)$ obtained in the example at the end of Step 4 in the proof of Theorem~\ref{thm:Lovejoy2017} (Section~\ref{sec:proofnewthm}). It is in $\mathfrak{S}(6,4;48).$ Consider a partition $\mu= (17_4,12_4,11_4,9_4,5_4,2_4)$ of $56$ into distinct parts in color $4.$
\end{example}

\begin{enumerate}[label=\textbf{Step \arabic*:}]
\item Let $L$ denote the number of parts in the overpartition $\lambda.$ For each part $p$ of $\mu$ such that $p \leq L,$ add $1$ to the first $p$ parts of $\lambda$ and add $2^{k-1}$ to the color of $\lambda_p.$ This gives a new overpartition $\lambda^{(1)}.$ Keeping only the unused parts of $\mu,$ we also get a new partition $\mu^{(1)}.$
\begin{example}
Consider the partitions from the previous example and apply Step 1 to them:
\begin{align*}
	L = 8, \\
	(\lambda,\mu) &= ((\overline{12}_3,\overline{9}_1,9_3,\overline{6}_1,5_1,3_2,\overline{2}_2,\overline{1}_1),(17_4,12_4,11_4,9_4,5_4,{2_4})) \\
	&\mapsto ((\overline{{13}}_3,\overline{{10}}_{{5}},9_3,\overline{6}_1,5_1,3_2,\overline{2}_2,\overline{1}_1),(17_4,12_4,11_4,9_4,{5_4})) \\
	&\mapsto ((\overline{{14}}_3,\overline{{11}}_5,{10}_3,\overline{{7}}_1,{6_5},3_2,\overline{2}_2,\overline{1}_1),(17_4,12_4,11_4,9_4)) \\
	&= (\lambda^{(1)},\mu^{(1)}).
\end{align*}
\end{example}
\begin{lemma}\label{lemma:diffcondstep1}
The following difference condition is satisfied by $\lambda^{(1)}$:
    \begin{equation}\label{eq:diffcondstep1}
    \lambda_i^{(1)}-\lambda_{i+1}^{(1)} \geq \omega(c_i^{(1)}) + \delta^*(c_i^{(1)},c_{i+1}^{(1)}) - \mathbf{1}_{\lambda_{i+1}^{(1)} \text{ is not overlined}},
    \end{equation}
for $i=1,\dots,L-1,$ where
    \[
    \delta^*(c_i,c_{i+1}) \coloneqq \begin{cases}
    \delta(c_i ,c_{i+1}), & \text{if } c_i  < 2^{k-1}, \\
    \delta(c_i -2^{k-1},c_{i+1}),  & \text{if } c_i  > 2^{k-1},
    \end{cases}
    \]
and undefined for $c_i=2^{k-1},$ as no part in $\lambda$ had color $2^{k-1}$ and the new colors are made by adding $2^{k-1}$ to an already positive color.
\end{lemma}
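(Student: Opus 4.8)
The plan is to reduce inequality~\eqref{eq:diffcondstep1} for $\lambda^{(1)}$ directly to condition~(iii) of Definition~\ref{def:Soverlined} for the input overpartition $\lambda$, by writing down in closed form how Step~1 acts on parts, colors and overlines. Two preliminary observations: Step~1 neither creates nor destroys parts, so $\lambda^{(1)}$ still has $L$ parts and the index range $i=1,\dots,L-1$ is unchanged; and, by the induction hypothesis, every color of $\lambda$ lies in $\{1,\dots,2^{k-1}-1\}$, so each color of $\lambda^{(1)}$ is either such an original color or of the form $c+2^{k-1}$ with $1\le c\le 2^{k-1}-1$, and hence lies in $\{2^{k-1}+1,\dots,2^{k}-1\}$. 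In particular no color of $\lambda^{(1)}$ equals $2^{k-1}$, so $\delta^{*}$ is well defined on every adjacent pair of parts of $\lambda^{(1)}$.

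Writing $P\coloneqq\{\,p : p\text{ is a part of }\mu\text{ with }p\le L\,\}$, Step~1 has the closed form $\lambda^{(1)}_{i}=\lambda_{i}+\#\{p\in P: p\ge i\}$ and $c^{(1)}_{i}=c_{i}+2^{k-1}\mathbf{1}_{i\in P}$, with the overline status of every part preserved; since $P$ is a set, telescoping yields $\lambda^{(1)}_{i}-\lambda^{(1)}_{i+1}=(\lambda_{i}-\lambda_{i+1})+\mathbf{1}_{i\in P}$. To rewrite the right-hand side of~\eqref{eq:diffcondstep1} I would use two elementary facts about binary expansions: for $1\le c\le 2^{k-1}-1$ one has $\omega(c+2^{k-1})=\omega(c)+1$ (the power $2^{k-1}$ does not occur in the expansion of $c$) and $v(c+2^{k-1})=v(c)$ (the newly added power $2^{k-1}$ exceeds every power occurring in $c$, hence does not change the smallest one). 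The first gives $\omega(c^{(1)}_{i})=\omega(c_{i})+\mathbf{1}_{i\in P}$. The second, combined with the definition of $\delta^{*}$ — which replaces a left color larger than $2^{k-1}$ by that color minus $2^{k-1}$, exactly undoing the modification of Step~1 — gives $\delta^{*}(c^{(1)}_{i},c^{(1)}_{i+1})=\delta(c_{i},c^{(1)}_{i+1})=\delta(c_{i},c_{i+1})$, the last equality because $\delta$ depends on its second argument only through the value of $v$ there, which $v(c+2^{k-1})=v(c)$ leaves untouched.

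Substituting these into~\eqref{eq:diffcondstep1}, the two copies of $\mathbf{1}_{i\in P}$ — one from the extra box added to $\lambda_{i}$ in Step~1, the other from the fresh primary color $2^{k-1}$ now occurring in $c^{(1)}_{i}$ — cancel, and, since the overline indicator is unchanged, \eqref{eq:diffcondstep1} becomes $\lambda_{i}-\lambda_{i+1}\ge\omega(c_{i})+\delta(c_{i},c_{i+1})-\mathbf{1}_{\lambda_{i+1}\text{ is not overlined}}$ for $i=1,\dots,L-1$. This is precisely condition~(iii) of Definition~\ref{def:Soverlined} for $\lambda$; and since all colors of $\lambda$ are below $2^{k-1}$, condition~(iii) and its $\delta^{*}$-form coincide for $\lambda$, so the inequality we obtain and the hypothesis match up without any discrepancy.

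The whole content of the lemma is this single cancellation, and the only thing that can go wrong is mismatching the case split in $\delta^{*}$ (left color untouched versus left color increased by $2^{k-1}$) with the closed forms above; in particular one must check that a pair of adjacent parts that were \emph{both} recolored in Step~1 comes out correctly, which it does because in $\delta^{*}$ only the first color is reduced while only the second enters, and it enters only through $v$. An equivalent and perhaps cleaner write-up processes the parts of $P$ one at a time: the resulting elementary operations commute, at the moment a given $p$ is processed the color of $\lambda_{p}$ is still its original value (hence $<2^{k-1}$), and one checks that~\eqref{eq:diffcondstep1} is preserved by the local case analysis $i\le p-2$, $i=p-1$, $i=p$, $i\ge p+1$. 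I would present whichever of the two is shorter.
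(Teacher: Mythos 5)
Your argument is correct and follows essentially the same route as the paper: reduce the claimed inequality for $\lambda^{(1)}$ to condition~(iii) of Definition~\ref{def:Soverlined} for $\lambda$ (the induction hypothesis), by tracking how Step~1 shifts each quantity. The paper does this as a $2\times 2$ case table (left color changed or not, $\lambda_{i+1}$ overlined or not); you replace the table with the closed-form identities $\lambda_i^{(1)}-\lambda_{i+1}^{(1)}=(\lambda_i-\lambda_{i+1})+\mathbf{1}_{i\in P}$, $\omega(c_i^{(1)})=\omega(c_i)+\mathbf{1}_{i\in P}$, $\delta^*(c_i^{(1)},c_{i+1}^{(1)})=\delta(c_i,c_{i+1})$, and observe the single cancellation of $\mathbf{1}_{i\in P}$, which is the same verification stated more compactly.
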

\begin{proof}
By the induction hypothesis, $\lambda$ satisfies $\lambda_i-\lambda_{i+1} \geq \omega(c_i) + \delta(c_i,c_{i+1}) -\mathbf{1}_{\lambda_{i+1} \text{ is not overlined}},$ for $i=1,\dots,L-1,$ so using this and Table~\ref{table1}, we can easily verify \eqref{eq:diffcondstep1}.
\begin{table}[H]
    \centering
    \begin{tabular}{|c|c|c|}
    \hline
      & \textbf{$c_i^{(1)}=c_i$} & \textbf{$c_i^{(1)}=c_i+2^{k-1}$} \\
    \hline
    $\lambda_{i+1}$ is overlined & 
    \begin{minipage}{5cm}
    \vspace{6pt} 
    \begin{itemize}[left=0pt]
        \item $\lambda_i^{(1)}-\lambda_{i+1}^{(1)}=\lambda_i-\lambda_{i+1}$
        \item $\omega(c_i^{(1)})=\omega(c_i)$
        \item $\delta^*(c_i^{(1)},c_{i+1}^{(1)}) = \delta(c_i,c_{i+1})$
        \item $\mathbf{1}_{\lambda_{i+1}^{(1)} \text{ is not overlined}} = 0$
    \end{itemize}
    \vspace{1pt} 
    \end{minipage}
    & \begin{minipage}{5cm}
    \vspace{6pt} 
    \begin{itemize}[left=0pt]
        \item $\lambda_i^{(1)}-\lambda_{i+1}^{(1)}\geq \lambda_i-\lambda_{i+1}+1$
        \item $\omega(c_i^{(1)})=\omega(c_i)+1$
        \item $\delta^*(c_i^{(1)},c_{i+1}^{(1)}) = \delta(c_i,c_{i+1})$
        \item $\mathbf{1}_{\lambda_{i+1}^{(1)} \text{ is not overlined}} = 0$
    \end{itemize}
    \vspace{1pt} 
    \end{minipage} \\
    \hline
    $\lambda_{i+1}$ is not overlined & \begin{minipage}{5cm}
    \vspace{6pt} 
    \begin{itemize}[left=0pt]
        \item $\lambda_i^{(1)}-\lambda_{i+1}^{(1)}=\lambda_i-\lambda_{i+1}$
        \item $\omega(c_i^{(1)})=\omega(c_i)$
        \item $\delta^*(c_i^{(1)},c_{i+1}^{(1)}) = \delta(c_i,c_{i+1})$
        \item $\mathbf{1}_{\lambda_{i+1}^{(1)} \text{ is not overlined}} = 1$
    \end{itemize}
    \vspace{1pt} 
    \end{minipage} & \begin{minipage}{5cm}
    \vspace{6pt} 
    \begin{itemize}[left=0pt]
        \item $\lambda_i^{(1)}-\lambda_{i+1}^{(1)}\geq \lambda_i-\lambda_{i+1}+1$
        \item $\omega(c_i^{(1)})=\omega(c_i)+1$
        \item $\delta^*(c_i^{(1)},c_{i+1}^{(1)}) = \delta(c_i,c_{i+1})$
        \item $\mathbf{1}_{\lambda_{i+1}^{(1)} \text{ is not overlined}} = 1$
    \end{itemize}
    \vspace{1pt} 
    \end{minipage} \\
    \hline
    \end{tabular}
\caption{Note that the values in this table are the same whether $c_{i+1}^{(1)}=c_{i+1}+2^{k-1} \text{ or } c_{i+1}^{(1)}=c_{i+1}.$}\label{table1}
\end{table}
\end{proof}
\begin{lemma}\label{lemma:smallestpart1}
The smallest part in $\lambda^{(1)}$ satisfies
\begin{equation}\label{eq:smallestpart1}
\lambda_L^{(1)}\geq\omega(c_L^{(1)}).
\end{equation}
\end{lemma}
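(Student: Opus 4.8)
The statement is essentially condition (i) of Definition~\ref{def:Soverlined} transported through Step~1, so the plan is to deduce \eqref{eq:smallestpart1} directly from the corresponding property of $\lambda$ by carefully tracking how the smallest part is affected. Since $\lambda$ lies in the range covered by the induction hypothesis, it satisfies $\lambda_L \geq \omega(c_L)$, and all of its colours lie in $\{1,\dots,2^{k-1}-1\}$; in particular $2^{k-1}$ is not a summand in the binary decomposition of $c_L$. The first thing I would note is that Step~1 neither creates nor deletes parts — it only increases some part-sizes and enlarges some colours — so $\lambda^{(1)}$ again has $L$ parts, is still weakly decreasing, and its smallest part is $\lambda_L^{(1)}$, sitting in position $L$ with colour $c_L^{(1)}$.

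The key observation is that when a part $p$ of $\mu$ with $p\leq L$ is processed, we add $1$ only to the parts in positions $1,\dots,p$ and add $2^{k-1}$ only to the colour in position $p$; hence position $L$ is touched precisely when $p=L$, i.e. precisely when the integer $L$ is itself one of the (distinct) parts of $\mu$. I would then split into two cases. If $L$ is not a part of $\mu$, every processed part satisfies $p\leq L-1$, so position $L$ is never among the first $p$ parts and is never recoloured; thus $\lambda_L^{(1)}=\lambda_L$ and $c_L^{(1)}=c_L$, and \eqref{eq:smallestpart1} is just the inequality $\lambda_L\geq\omega(c_L)$ furnished by the induction hypothesis. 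If $L$ is a part of $\mu$, then processing $p=L$ adds $1$ to all $L$ parts — in particular to $\lambda_L$ — and adds $2^{k-1}$ to $c_L$, while every other processed part is $<L$ and leaves position $L$ untouched; so $\lambda_L^{(1)}=\lambda_L+1$ and, since $2^{k-1}$ was not already present in $c_L$, $\omega(c_L^{(1)})=\omega(c_L)+1$. Combining gives $\lambda_L^{(1)}=\lambda_L+1\geq\omega(c_L)+1=\omega(c_L^{(1)})$.

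There is no real obstacle here: the only point requiring care is the bookkeeping that the smallest part is modified at all only in the single case $L\in\mu$, and that in that case both sides of the desired inequality increase by exactly one, so the inequality survives. This is the $k\geq 3$ analogue of the (implicit) fact used in the $k=2$ proof that $\lambda^{(1)}_L\geq\omega(c^{(1)}_L)$, and the argument is uniform in $k$.
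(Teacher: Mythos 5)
Your argument is correct and matches the paper's proof in substance: in both, one observes that either the smallest part is untouched by Step~1 (and the induction hypothesis applies directly), or it gains exactly one $+1$ to its size and one $+2^{k-1}$ to its colour, so both sides of the inequality increase by one. Your version just makes the "touched iff $L\in\mu$" bookkeeping explicit, whereas the paper derives the same dichotomy from the remark that distinct parts of $\mu$ ensure each part of $\lambda$ receives at most one $2^{k-1}$.
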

\begin{proof}
Each part in the partition $\lambda$ gets at most one $2^{k-1}$ added to its color, since $\mu$ is made of distinct parts. Therefore, either $\lambda_L$ is not changed at all by the procedure, thus we conclude by induction, or $c_L^{(1)}=c_L+2^{k-1}$ and $\lambda_L^{(1)} = \lambda_L+1 \geq \omega(c_L)+1=\omega(c_L^{(1)}).$
\end{proof}

\item Let $M$ denote the number of parts in the partition $\mu^{(1)}.$ From $\mu^{(1)},$ get $\mu^{(2)}$ by taking away a (colorless) staircase $\nu'$ of size $(\,L+M-1,\,L+M-2,\dots,\,L\,).$ This is possible, since the smallest part in $\mu^{(1)}$ is bigger than $L$ and all its parts are distinct. From $\lambda^{(1)},$ get a partition $\lambda^{(2)}$ that is not overlined anymore by taking away a (colorless) \emph{generalized staircase}, i.e for each overlined part in $\lambda^{(1)},$ remove the overline and remove $1$ from every part above this one. These $1$'s are moved to the generalized staircase as a new part. Call the generalized staircase $\nu''.$
    
    \begin{remark}
        A generalized staircase can have a part of size $0$ (e.g. if the largest part in $\lambda^{(1)}$ is overlined).
    \end{remark}
    
    Since the largest part of $\nu''$ is at most $L-1,$ combine it with $\nu'.$ This gives a \emph{partial staircase} $\nu.$
\begin{example}
Consider the partitions $\lambda^{(1)}$ and $\mu^{(1)}$ obtained in the previous example:
\[
\lambda^{(1)}=(\overline{14}_3,\overline{11}_5,10_3,\overline{7}_1,6_5,3_2,\overline{2}_2,\overline{1}_1)\quad\text{and}\quad\mu^{(1)}=(17_4,12_4,11_4,9_4).
\]
Since $L=8$ and the number of parts in $\mu^{(1)}$ is $M=4,$ remove a staircase $\nu'=(11,10,9,8)$ from $\mu^{(1)}$:
\begin{align*}
\mu^{(1)}&=(17_4,12_4,11_4,9_4) \\
	&\mapsto ((6_4,2_4,2_4,1_4),(11,10,9,8)) \\
	&= (\mu^{(2)},\nu').
\end{align*}
Remove the generalized staircase $\nu''$ from $\lambda^{(1)}$:
\begin{align*}
\lambda^{(1)}&= ({\overline{\textcolor{black}{14}}}_3,\overline{11}_5,10_3,\overline{7}_1,6_5,3_2,\overline{2}_2,\overline{1}_1) \\
	&\mapsto ((14_3,{\overline{\textcolor{black}{11}}}_5,10_3,\overline{7}_1,6_5,3_2,\overline{2}_2,\overline{1}_1),({0})) \\
	&\mapsto (({13}_3,11_5,10_3,{\overline{\textcolor{black}{7}}}_1,6_5,3_2,\overline{2}_2,\overline{1}_1),({1},0)) \\
	&\mapsto (({12}_3,{10}_5,{9}_3,7_1,6_5,3_2,{\overline{\textcolor{black}{2}}}_2,\overline{1}_1),({3},1,0)) \\
	&\mapsto (({11}_3,{9}_5,{8}_3,{6}_1,{5}_5,{2}_2,2_2,{\overline{\textcolor{black}{1}}}_1),({6},3,1,0)) \\
	&\mapsto (({10}_3,{8}_5,{7}_3,{5}_1,{4}_5,{1}_2,{1}_2,1_1),({7},6,3,1,0)) \\
	&= (\lambda^{(2)},\nu'').
\end{align*}
Finally, combine $\nu'$ and $\nu''$:
\begin{align*}
(\nu',\nu'') &= ((11,10,9,8),(7,6,3,1,0)) \\
&\mapsto (11,10,9,8,7,6,3,1,0) \\
&= \nu.
\end{align*}
\end{example}
\begin{lemma}\label{lemma:diffcondstep2}
The following difference condition is satisfied by $\lambda^{(2)}$:
    \begin{equation}\label{eq:diffcondstep2}
    \lambda_i^{(2)}-\lambda_{i+1}^{(2)} \geq \omega(c_i^{(2)})+ \delta^*(c_i^{(2)},c_{i+1}^{(2)})-1,
    \end{equation}
    for $i=1,\dots,L-1.$
\end{lemma}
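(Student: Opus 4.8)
The plan is to derive the difference condition \eqref{eq:diffcondstep2} for $\lambda^{(2)}$ directly from the one already established for $\lambda^{(1)}$ in Lemma~\ref{lemma:diffcondstep1}, by bookkeeping precisely how the removal of the generalized staircase $\nu''$ in Step 2 changes each gap between consecutive parts. The first thing I would note is that Step 2 never modifies colors — it only strips overlines and subtracts $1$'s from parts — so $c_i^{(2)}=c_i^{(1)}$ for all $i$, whence $\omega(c_i^{(2)})=\omega(c_i^{(1)})$ and $\delta^*(c_i^{(2)},c_{i+1}^{(2)})=\delta^*(c_i^{(1)},c_{i+1}^{(1)})$ (these are well defined since, by Lemma~\ref{lemma:diffcondstep1}, no part of $\lambda^{(1)}$ has color exactly $2^{k-1}$). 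Moreover $\lambda^{(2)}$ has no overlined parts and still has $L$ parts, since one unit of the staircase is produced per overlined part of $\lambda^{(1)}$ and no part is deleted.

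Next I would analyse the effect on a single gap. Fixing $i\in\{1,\dots,L-1\}$, recall that each overlined part $\lambda_j^{(1)}$ contributes $-1$ to every part strictly above position $j$, i.e.\ to positions $1,\dots,j-1$, while its own overline is removed. Comparing positions $i$ and $i+1$: if $j\le i$, neither $\lambda_i^{(1)}$ nor $\lambda_{i+1}^{(1)}$ lies strictly above position $j$, so this contribution leaves the gap $\lambda_i-\lambda_{i+1}$ unchanged; if $j\ge i+2$, both endpoints drop by $1$, so the gap is again unchanged; and only $j=i+1$ changes the gap — it lowers $\lambda_i^{(1)}$ by $1$ but not $\lambda_{i+1}^{(1)}$, shrinking the gap by exactly $1$. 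Since at most one overlined part of $\lambda^{(1)}$ sits at position $i+1$, summing the contributions over all overlined positions gives
\[
\lambda_i^{(2)}-\lambda_{i+1}^{(2)}=\bigl(\lambda_i^{(1)}-\lambda_{i+1}^{(1)}\bigr)-\mathbf{1}_{\lambda_{i+1}^{(1)}\text{ is overlined}}.
\]

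Finally I would feed this identity into \eqref{eq:diffcondstep1}. If $\lambda_{i+1}^{(1)}$ is overlined, the indicator $\mathbf{1}_{\lambda_{i+1}^{(1)}\text{ is not overlined}}$ in \eqref{eq:diffcondstep1} equals $0$ while we subtract $1$ above; if $\lambda_{i+1}^{(1)}$ is not overlined, we subtract $0$ above while \eqref{eq:diffcondstep1} already subtracts $1$. Either way,
\[
\lambda_i^{(2)}-\lambda_{i+1}^{(2)}\ge \omega(c_i^{(1)})+\delta^*(c_i^{(1)},c_{i+1}^{(1)})-1,
\]
which, by the first paragraph, is precisely \eqref{eq:diffcondstep2}. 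I do not anticipate a genuine obstacle here: the only delicate point is the bookkeeping when several parts of $\lambda^{(1)}$ are overlined simultaneously, where one must check that for a fixed gap $(i,i+1)$ an overlined part elsewhere lowers both endpoints or neither, so that only an overlined part sitting at position $i+1$ actually shrinks that gap. Once this is isolated, the lemma collapses to the two-case split described above.
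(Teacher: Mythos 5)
Your proof is correct and follows essentially the same approach as the paper: both split on whether $\lambda_{i+1}^{(1)}$ was overlined and use Lemma~\ref{lemma:diffcondstep1} to conclude. You simply make explicit the per-overlined-part bookkeeping that the paper states in one line (``the same amount has been taken away from $\lambda_i^{(1)}$ and $\lambda_{i+1}^{(1)}$''), by checking that an overlined part at position $j$ affects the gap $(i,i+1)$ only when $j=i+1$.
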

\begin{proof}
If $\lambda_{i+1}^{(1)}$ was overlined, then at one point during the procedure we removed 1 from $\lambda_i^{(1)}$ without removing anything from $\lambda_{i+1}^{(1)},$ which implies that
    \begin{align*}
    \lambda_i^{(2)}-\lambda_{i+1}^{(2)}
    &=\lambda_i^{(1)}-1-\lambda_{i+1}^{(1)} \\
    &\stackrel{\eqref{eq:diffcondstep1}}{\geq}\omega(c_i^{(1)})+\delta^*(c_i^{(1)},c_{i+1}^{(1)})-\mathbf{1}_{\lambda_{i+1}^{(1)} \text{ is not overlined}}-1 \\
    &=\omega(c_i^{(1)})+\delta^*(c_i^{(1)},c_{i+1}^{(1)})-1 \\
    &=\omega(c_i^{(2)})+\delta^*(c_i^{(2)},c_{i+1}^{(2)})-1.
    \end{align*}
In the other case, if $\lambda_{i+1}^{(1)}$ was not overlined, then the same amount has been taken away from $\lambda_i^{(1)}$ and $\lambda_{i+1}^{(1)}$, thus we have
    \begin{align*}
    \lambda_i^{(2)}-\lambda_{i+1}^{(2)}
    &=\lambda_i^{(1)}-\lambda_{i+1}^{(1)} \\
    &\stackrel{\eqref{eq:diffcondstep1}}{\geq}\omega(c_i^{(1)})+\delta^*(c_i^{(1)},c_{i+1}^{(1)})-\mathbf{1}_{\lambda_{i+1}^{(1)} \text{ is not overlined}} \\
    &=\omega(c_i^{(1)})+\delta^*(c_i^{(1)},c_{i+1}^{(1)})-1 \\
    &=\omega(c_i^{(2)})+\delta^*(c_i^{(2)},c_{i+1}^{(2)})-1.
    \end{align*}
\end{proof}
\begin{lemma}\label{lemma:smallestpart2}
The smallest part in $\lambda^{(2)}$ satisfies
\begin{equation}\label{eq:smallestpart2}
\lambda_L^{(2)}\geq\omega(c_L^{(2)}).
\end{equation}
\end{lemma}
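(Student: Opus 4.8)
The plan is to observe that Step~2 leaves the smallest part of $\lambda^{(1)}$ completely unchanged, both in size and in color, so that \eqref{eq:smallestpart2} follows at once from Lemma~\ref{lemma:smallestpart1}. First I would note that the removal of the staircase $\nu'$ is an operation carried out entirely on $\mu^{(1)}$ and has no effect on $\lambda^{(1)}$; hence only the extraction of the generalized staircase $\nu''$ can possibly alter $\lambda_L^{(1)}$.

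Next I would unwind the definition of the generalized staircase. Each time an overlined part $\lambda_j^{(1)}$ is processed, the procedure deletes its overline and subtracts $1$ from every part lying above it, namely from $\lambda_1^{(1)},\dots,\lambda_{j-1}^{(1)}$, while the part $\lambda_j^{(1)}$ itself keeps its size. Since $\lambda_L^{(1)}$ is the last part of $\lambda^{(1)}$, no part lies below it, so it is never among the parts that get a $1$ subtracted. The only thing that can happen to $\lambda_L^{(1)}$ is the deletion of its own overline, should it be overlined, and this does not change its size. Therefore $\lambda_L^{(2)}=\lambda_L^{(1)}$. Moreover $\nu''$ (like $\nu'$) carries no color, so its removal leaves every color unchanged; in particular $c_L^{(2)}=c_L^{(1)}$.

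Combining these observations with Lemma~\ref{lemma:smallestpart1} yields $\lambda_L^{(2)}=\lambda_L^{(1)}\geq\omega(c_L^{(1)})=\omega(c_L^{(2)})$, which is precisely \eqref{eq:smallestpart2}. I do not expect any genuine obstacle here: the whole argument is bookkeeping on the definition of the generalized staircase, the essential point being merely that $1$'s are only ever moved down from parts sitting above an overlined part, never from the bottom part. No estimate beyond the induction hypothesis, already packaged in Lemma~\ref{lemma:smallestpart1}, is required.
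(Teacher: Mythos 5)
Your argument is correct and coincides with the paper's own proof: both observe that removing the generalized staircase affects the smallest part only by possibly deleting its overline (never its size or color), so $\lambda_L^{(2)}=\lambda_L^{(1)}$ and $c_L^{(2)}=c_L^{(1)}$, and the bound then follows directly from Lemma~\ref{lemma:smallestpart1}. Your version merely spells out the bookkeeping in more detail.
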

\begin{proof}
When the generalized staircase is removed from $\lambda^{(1)},$ the only possible change to the smallest part is the removal of its overline; its color and size remain unchanged. Therefore,
\[
\lambda_L^{(2)}=\lambda_L^{(1)}\stackrel{\eqref{eq:smallestpart1}}{\geq}\omega(c_L^{(1)})=\omega(c_L^{(2)}).
\]
\end{proof}

\item In this general setting, Step 3 is more complex than in the case $k=2$ treated in Section~\ref{subsec:caseKis2}. Simply inserting the parts of $\mu^{(2)}$ into $\lambda^{(2)}$ does not satisfy the gap conditions, so we must adjust the colors to ensure they are met. This idea comes from the proof of Theorem~\ref{thm:CorteelLovejoy} by Corteel and Lovejoy \cite{CorteelLovejoy2006}, and in fact Step 3 here coincides with the third step of the bijection in that proof.

In this step, the parts of $\mu^{(2)}$ are inserted in $\lambda^{(2)}$ one by one, from largest to smallest, to get a new partition $\lambda^{(3)}.$ Before inserting anything, we define $\lambda^{(3)} \coloneqq \lambda^{(2)}.$ Then, $\lambda^{(3)}$ will grow as more parts are added to it.
    
Start by inserting $\mu_1^{(2)}$ such that it becomes the first occurence of its value in the partition $\lambda^{(3)}.$ Without loss of generality, suppose $\mu_1^{(2)}$ is at the $i$-th position in the partition, i.e
\[
\lambda^{(3)}=(\lambda_1^{(3)}, \dots, \lambda_{i-1}^{(3)}, \mu_1^{(2)}, \lambda_i^{(3)}, \dots, \lambda_{L}^{(3)}).
\]
Rename the parts in $\lambda^{(3)}$ as
\[
\lambda^{(3)}= (\lambda_1^{(3)}, \dots, \lambda_{i-1}^{(3)}, \lambda_i^{(3)}, \lambda_{i+1}^{(3)}, \dots, \lambda_{L+1}^{(3)}),
\]
so that $\mu_1^{(2)}$ is now named $\lambda_i^{(3)}.$ Since it is the first occurrence of its value, $\lambda_{i-1}^{(3)}-\lambda_i^{(3)}\geq1$ and $\lambda_i^{(3)}-\lambda_{i+1}^{(3)}\geq0.$
Furthermore, since $\omega(c_i^{(3)})=\omega(2^{k-1})=1$ and $\delta(c_i^{(3)},c_{i+1}^{(3)})=\delta(2^{k-1},c_{i+1}^{(3)})=0,$ this last inequality is equivalent to
\[
\lambda_i^{(3)}-\lambda_{i+1}^{(3)}\geq\omega(c_i^{(3)})+\delta(c_i^{(3)},c_{i+1}^{(3)})-1.
\]
    However, it may be that
    \begin{equation}\label{eq:diffcondproblemstep3}
    \lambda_{i-1}^{(3)}-\lambda_i^{(3)}<\omega(c_{i-1}^{(3)})+\delta(c_{i-1}^{(3)},c_i^{(3)})-1,
    \end{equation}
    and in this case, a \emph{redistribution of colors} takes place between $\lambda_{i-1}^{(3)}$ and $\lambda_i^{(3)}.$ Specifically, if $\lambda_{i-1}^{(3)} - \lambda_i^{(3)} = j,$ where $j \geq 1$ and $\omega(c_{i-1}^{(3)}) > 1 + j - \delta(c_{i-1}^{(3)}, c_i^{(3)}),$ then we modify the colors of the two parts as follows. From the color $c_{i-1}^{(3)}$, we take the first $j$ smallest powers of $2$ of its binary decomposition and combine them with the color $2^{k-1}$ to obtain $\tilde{c}_{i-1}^{(3)}$, while the remaining portion of $c_{i-1}^{(3)}$ is assigned to $\tilde{c}_i^{(3)}$. The values of the parts themselves remain unchanged, but we denote them by $\tilde{\lambda}_{i-1}^{(3)}$ and $\tilde{\lambda}_i^{(3)}$ to distinguish the situation after the change of colors.

Continue the insertion until all the parts of color $2^{k-1}$ are inserted in $\lambda^{(3)}.$
\begin{example}
Consider the partitions $\lambda^{(2)}=(7_3,6_5,5_3,3_1,3_5,1_2,1_2,1_1)$ and $\mu^{(2)}=(6_4,2_4,2_4,1_4)$ obtained in the previous example.
Insert each part of $\mu^{(2)}$ inside $\lambda^{(2)},$ starting from the largest one, such that it becomes the first occurrence of its value. Observe that there is a case of redistribution of colors when inserting the first part $6_4$:
    \[
        \lambda^{(3)} = (\underbrace{7_3,6_4},6_5,5_3,3_1,3_5,1_2,1_2,1_1),
    \]
Indeed, there is a problem because $7-6=1<2=\omega(3)+\delta(3,4)-1.$ Let us see the details of the redistribution in this case. The binary decomposition of $3$ is $1+2$ and $j=7-6=1.$ Therefore, the new color for the part $7_3$ is the sum of the $j=1$ smallest powers of two in the binary decomposition of $3$, i.e $1,$ to which is added $4.$ This gives the color $1+4=5.$ The new color for the part $6_4$ is the sum of the remaining powers of two in the binary decomposition, i.e $2.$ Then,
    \[
        \lambda^{(3)} = (\underbrace{7_5,6_2},6_5,5_3,3_1,3_5,1_2,1_2,1_1).
    \]
There is no problem anymore since $7-6=1\geq1=\omega(5)+\delta(5,2)-1,$ and continuing the insertion, no more problems are encountered in this example. The partition obtained after Step 3 is
    \[
    \lambda^{(3)}=(7_5,6_2,6_5,5_3,3_1,3_5,2_4,2_4,1_4,1_2,1_2,1_1).
    \]
\end{example}
\begin{remark}
            When $k=2,$ redistributions of colors never occur. Indeed,
            \[
            \lambda_{i-1}^{(3)}-\lambda_{i}^{(3)}\geq\omega(c_{i-1}^{(3)})+\delta(c_{i-1}^{(3)},2)-1=
            \begin{cases}
                \omega(1)+\delta(1,2)-1=1, & \text{ if }c_{i-1}^{(3)}=1, \\
                \omega(3)+\delta(3,2)-1=1, & \text{ if }c_{i-1}^{(3)}=3,
            \end{cases}
            \]
            which is always true, since the part inserted $\lambda_i^{(3)}$ is the first occurrence of its value.
\end{remark}

\begin{lemma}\label{lemma:redistribcolors}
The following are true:
\begin{enumerate}
\item $\tilde{c}_i^{(3)} \neq 2^{k-1},$
\smallskip
\item $v(\tilde{c}_{i-1}^{(3)}) = v(c_{i-1}^{(3)}),\ z(\tilde{c}_i^{(3)}) = z(c_{i-1}^{(3)}),\ \delta(\tilde{c}_{i-1}^{(3)} - 2^{k-1}, \tilde{c}_i^{(3)}) = 1,$
\smallskip
\item $\tilde{\lambda}_{i-1}^{(3)} - \tilde{\lambda}_i^{(3)} = j = \omega(\tilde{c}_{i-1}^{(3)}) + \delta(\tilde{c}_{i-1}^{(3)}, \tilde{c}_i^{(3)}) - 1,$
\smallskip
\item $\tilde{\lambda}_i^{(3)} - \lambda_{i+1}^{(3)} \geq  \omega(\tilde{c}_i^{(3)}) + \delta^*(\tilde{c}_i^{(3)}, c_{i+1}^{(3)}) - 1,$
\smallskip
\item $\lambda_{i-2}^{(3)} - \tilde{\lambda}_{i-1}^{(3)} \geq \omega(c_{i-2}^{(3)}) + \delta^*(c_{i-2}^{(3)}, \tilde{c}_{i-1}^{(3)}) - 1,$
\smallskip
\item Any additional occurrences of $\mu_1^{(2)}$ in color $2^{k-1}$ may now be inserted without violating the conditions.
\item $\tilde{\lambda}_{i-1}^{(3)} - \tilde{\lambda}_i^{(3)} < \omega(\tilde{c}_{i-1}^{(3)}) + \delta(\tilde{c}_{i-1}^{(3)}-2^{k-1}, \tilde{c}_i^{(3)}) - 1.$
\end{enumerate}
\end{lemma}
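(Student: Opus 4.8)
The plan is to unwind the redistribution of colours in terms of binary decompositions and then verify the seven assertions by short computations, nearly all of which reduce to part~(2). I would begin by fixing notation: write $a:=c_{i-1}^{(3)}$ and let $2^{e_1}<2^{e_2}<\dots<2^{e_t}$ be the powers of two occurring in the binary expansion of $a$, so that $t=\omega(a)$, $v(a)=2^{e_1}$ and $z(a)=2^{e_t}$; set $j:=\lambda_{i-1}^{(3)}-\lambda_i^{(3)}\ge 1$. The first thing to record is what the trigger for the redistribution gives: the hypothesis $\omega(a)>1+j-\delta(a,2^{k-1})$ forces $t\ge j+1$ in general, and $t\ge j+2$ precisely when $\delta(a,2^{k-1})=0$, that is when $z(a)=2^{k-1}$ (equivalently $a>2^{k-1}$ and $e_t=k-1$); in particular $t\ge 2$, so $e_1<e_t\le k-1$ and $e_j<e_t\le k-1$. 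By the definition of the redistribution,
\[
\tilde c_{i-1}^{(3)}=2^{k-1}+2^{e_1}+\dots+2^{e_j},\qquad \tilde c_i^{(3)}=2^{e_{j+1}}+\dots+2^{e_t},
\]
while $\tilde\lambda_{i-1}^{(3)}=\lambda_{i-1}^{(3)}$ and $\tilde\lambda_i^{(3)}=\lambda_i^{(3)}$; every subsequent claim is read off from these formulas.

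Parts~(1), (2), (3) and~(7) concern only this decomposition. For~(1), $\tilde c_i^{(3)}$ is a sum of $t-j\ge 1$ distinct powers of two; if $e_t<k-1$ they are all strictly below $2^{k-1}$, and if $e_t=k-1$ there are at least two of them, so $\tilde c_i^{(3)}\neq 2^{k-1}$ in either case. For~(2), $2^{e_1}=v(a)$ is the least power appearing in $\tilde c_{i-1}^{(3)}$ (since $e_1<k-1$), $2^{e_t}=z(a)$ is the greatest power in $\tilde c_i^{(3)}$, and $z(\tilde c_{i-1}^{(3)}-2^{k-1})=2^{e_j}<2^{e_{j+1}}=v(\tilde c_i^{(3)})$ yields $\delta(\tilde c_{i-1}^{(3)}-2^{k-1},\tilde c_i^{(3)})=1$. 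For~(3), $\omega(\tilde c_{i-1}^{(3)})=j+1$, and $z(\tilde c_{i-1}^{(3)})=2^{k-1}\ge v(\tilde c_i^{(3)})$ forces $\delta(\tilde c_{i-1}^{(3)},\tilde c_i^{(3)})=0$, so the right-hand side is $(j+1)+0-1=j=\tilde\lambda_{i-1}^{(3)}-\tilde\lambda_i^{(3)}$. Part~(7) then combines the last identity of~(2) with $\omega(\tilde c_{i-1}^{(3)})=j+1$: its right-hand side is $(j+1)+1-1=j+1$, which exceeds $j$.

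The substance lies in~(4), (5) and~(6), and here I would use that $\mu_1^{(2)}$ is the first (largest) part to be inserted, so that at the moment of its insertion $\lambda^{(3)}=\lambda^{(2)}$ and the triples of parts occurring in these claims are consecutive parts of $\lambda^{(2)}$; Lemma~\ref{lemma:diffcondstep2} therefore applies. For~(4) I would subtract $j=\lambda_{i-1}^{(3)}-\tilde\lambda_i^{(3)}$ from $\lambda_{i-1}^{(3)}-\lambda_{i+1}^{(3)}\ge\omega(a)+\delta^*(a,c_{i+1}^{(3)})-1$ and use $\omega(\tilde c_i^{(3)})=t-j=\omega(a)-j$; what then remains is the identity $\delta^*(a,c_{i+1}^{(3)})=\delta^*(\tilde c_i^{(3)},c_{i+1}^{(3)})$, which I would prove by splitting into the case $e_t<k-1$ (where $a,\tilde c_i^{(3)}<2^{k-1}$ and one uses $z(\tilde c_i^{(3)})=z(a)$ from~(2)) and the case $e_t=k-1$ (where $a,\tilde c_i^{(3)}>2^{k-1}$ and one uses $z(\tilde c_i^{(3)}-2^{k-1})=2^{e_{t-1}}=z(a-2^{k-1})$). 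Part~(5) is vacuous unless a part lies above $\lambda_{i-1}^{(3)}$; otherwise Lemma~\ref{lemma:diffcondstep2} gives $\lambda_{i-2}^{(3)}-\lambda_{i-1}^{(3)}\ge\omega(c_{i-2}^{(3)})+\delta^*(c_{i-2}^{(3)},a)-1$, and since $\tilde\lambda_{i-1}^{(3)}=\lambda_{i-1}^{(3)}$ and $\delta^*(c_{i-2}^{(3)},\cdot)$ depends on its second argument only through that argument's smallest primary colour, the claim follows from $v(\tilde c_{i-1}^{(3)})=v(a)$, i.e.\ from~(2). For~(6), a further copy of $\mu_1^{(2)}$ is placed as the first occurrence of its value, immediately above the part of value $\mu_1^{(2)}$ and colour $\tilde c_i^{(3)}$; the three gaps it creates are $\tilde\lambda_{i-1}^{(3)}-\mu_1^{(2)}=j=\omega(\tilde c_{i-1}^{(3)})+\delta(\tilde c_{i-1}^{(3)},2^{k-1})-1$ (as $z(\tilde c_{i-1}^{(3)})=2^{k-1}$ makes that $\delta$ vanish), then $\mu_1^{(2)}-\tilde\lambda_i^{(3)}=0=\omega(2^{k-1})+\delta(2^{k-1},\tilde c_i^{(3)})-1$ (as $v(\tilde c_i^{(3)})=2^{e_{j+1}}<2^{k-1}$ makes that $\delta$ vanish), and $\tilde\lambda_i^{(3)}-\lambda_{i+1}^{(3)}$, which is governed by~(4); each additional copy is handled in exactly the same way.

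I expect the main obstacle to be the case split in~(4): one must keep careful track of whether the part immediately above the inserted one already carries the new primary colour $2^{k-1}$ (that is, whether $a<2^{k-1}$ or $a>2^{k-1}$), since this determines which truncated colour $\delta^*$ consults, and one must confirm that the redistribution leaves invariant exactly the quantity $\delta^*$ reads off in each of the two cases. All of the remaining verifications are routine bookkeeping with binary expansions, once the formulas for $\tilde c_{i-1}^{(3)}$ and $\tilde c_i^{(3)}$ above are in hand.
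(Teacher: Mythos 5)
Your proposal is correct and follows essentially the same route as the paper: you read off (1)--(3) and (7) from the explicit formulas for $\tilde c_{i-1}^{(3)}$ and $\tilde c_i^{(3)}$, and you deduce (4)--(6) by telescoping against Lemma~\ref{lemma:diffcondstep2}, exactly as the paper does. One small improvement worth noting: in (4), where the paper justifies $\delta^*(c_{i-1}^{(3)}, c_{i+1}^{(3)}) = \delta^*(\tilde c_i^{(3)}, c_{i+1}^{(3)})$ only by the equality $z(\tilde c_i^{(3)}) = z(c_{i-1}^{(3)})$, your case split on $e_t < k-1$ versus $e_t = k-1$ supplies the missing observation that when both colors exceed $2^{k-1}$ the relevant quantity is $z$ of the \emph{truncated} colors, and these agree because $z(\tilde c_i^{(3)} - 2^{k-1}) = 2^{e_{t-1}} = z(c_{i-1}^{(3)} - 2^{k-1})$ (valid since the trigger forces $t \geq j+2$ in that case, guaranteeing $\tilde c_i^{(3)} \neq 2^{k-1}$).
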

\begin{proof}\ \\
\begin{enumerate}
\item We have $\tilde{c}_i^{(3)} \neq 2^{k-1},$ otherwise, we would have $\omega(c_{i-1}^{(3)})=j+1$ and $\delta(c_{i-1}^{(3)},c_i^{(3)})=0,$ implying that $\omega(c_{i-1}^{(3)})=j+1-\delta(c_{i-1}^{(3)},c_i^{(3)}),$ which contradicts our hypothesis.
\item By construction.
\item Indeed, $\tilde{c}_{i-1}^{(3)} > 2^{k-1}$ implies that $\delta(\tilde{c}_{i-1}^{(3)}, \tilde{c}_i^{(3)}) = 0,$ and $\omega(\tilde{c}_{i-1}^{(3)})=j+1$ by construction.
\item Since we are in the case \eqref{eq:diffcondproblemstep3} when adding the part $\mu_1^{(2)}$ as $\lambda_i^{(3)},$ we may observe that $c_{i-1}^{(3)} \neq 2^{k-1},$ or else we would have $\lambda_{i-1}^{(3)}-\lambda_i^{(3)}<1+0-1=0$ which is impossible. Moreover, $c_{i+1}^{(3)} = 2^{k-1}$ would contradict the fact that we start with the largest part $\mu_1^{(2)}.$ So we had $\underbrace{\lambda_{i-1}^{(3)} - \lambda_{i+1}^{(3)} \geq \omega(c_{i-1}^{(3)}) + \delta^*(c_{i-1}^{(3)}, c_{i+1}^{(3)}) - 1}_{(\star)} $ according to \eqref{eq:diffcondstep2}. Hence, we may deduce that
    \begin{align*}
    \tilde{\lambda}_i^{(3)} - \lambda_{i+1}^{(3)}
    &= \tilde{\lambda}_i^{(3)} - \lambda_{i-1}^{(3)} + \lambda_{i-1}^{(3)} - \lambda_{i+1}^{(3)} \, , \\
    &\geq \omega(c_{i-1}^{(3)}) - j + \delta^*(c_{i-1}^{(3)}, c_{i+1}^{(3)}) - 1 \, , & \text{by } \tilde{\lambda}_i^{(3)} - \lambda_{i-1}^{(3)} = -j \text{ and } (\star) \, . \\
    &= \omega(\tilde{c}_i^{(3)}) + \delta^*(\tilde{c}_i^{(3)}, c_{i+1}^{(3)}) - 1 \, , & \text{by } z(\tilde{c}_i^{(3)}) = z(c_{i-1}^{(3)}) \text{ and } \omega(\tilde{c}_i^{(3)})=\omega(c_{i-1}^{(3)})-j \, .
    \end{align*}
\item Indeed, $v(\tilde{c}_{i-1}^{(3)}) = v(c_{i-1}^{(3)})$ implies that $\delta(c_{i-2}^{(3)}, c_{i-1}^{(3)})=\delta(c_{i-2}^{(3)}, \tilde{c}_{i-1}^{(3)}),$ and since $\tilde{\lambda}_{i-1}^{(3)} = \lambda_{i-1}^{(3)},$ the minimal difference between $\lambda_{i-2}^{(3)}$ and $\tilde{\lambda}_{i-1}^{(3)}$ is the same as the one between $\lambda_{i-2}^{(3)}$ and $\lambda_{i-1}^{(3)}.$
\item Indeed, suppose we insert a second part $p$ of color $2^{k-1}$ and of the same size as $\mu_1^{(2)}.$ When inserted, it is positioned just above the previous inserted part. To avoid confusion, we will keep the same notations and we will say that the new partition is now of the form $(\lambda_1^{(3)},\dots,\,\lambda_{i-2}^{(3)},\,\tilde{\lambda}_{i-1}^{(3)},\,p,\,\tilde{\lambda}_i^{(3)},\,\lambda_{i+1}^{(3)},\dots)$\;. Then,
    \begin{align*}
        \tilde{\lambda}_{i-1}^{(3)}-p
        &=\tilde{\lambda}_{i-1}^{(3)}-\tilde{\lambda}_i^{(3)} \\
        &=j \\
        &=\omega(\tilde{c}_{i-1}^{(3)}) + \delta(\tilde{c}_{i-1}^{(3)}, \tilde{c}_i^{(3)}) - 1 \\
        &=\omega(\tilde{c}_{i-1}^{(3)}) - 1 \\
        &= \omega(\tilde{c}_{i-1}^{(3)}) + \delta(\tilde{c}_{i-1}^{(3)}, 2^{k-1}) - 1,
    \end{align*}
    and
    \begin{align*}
        p-\tilde{\lambda}_i^{(3)}
        &=0 \\
        &=\omega(2^{k-1}) + \delta(2^{k-1}, \tilde{c}_i^{(3)}) - 1.
    \end{align*}
\item The redistribution of colors implies that $\omega(\tilde{c}^{(3)}_{i-1})=j+1$ and that $\delta(\tilde{c}^{(3)}_{i-1}-2^{k-1}, \tilde{c}_i^{(3)})=1.$ By the point (c) of this lemma, we have $\tilde{\lambda}_{i-1}^{(3)} - \tilde{\lambda}_i^{(3)} = j < j+1 = \omega(\tilde{c}_{i-1}^{(3)}) + \delta(\tilde{c}_{i-1}^{(3)}-2^{k-1}, \tilde{c}_i^{(3)}) - 1.$
\end{enumerate}
\end{proof}
\begin{lemma}\label{lemma:diffcondstep3}
The following difference condition is satisfied by $\lambda^{(3)}$:
\begin{equation}\label{eq:diffcondstep3}
\lambda_i^{(3)}-\lambda_{i+1}^{(3)}\geq\omega(c_i^{(3)})+\delta(c_i^{(3)},c_{i+1}^{(3)})-1,
\end{equation}
for $i=1,\dots,L+M-1.$
\end{lemma}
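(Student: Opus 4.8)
The plan is to prove, by induction on the number of parts of $\mu^{(2)}$ inserted so far, that \emph{every} partition occurring during Step~3 already satisfies the difference condition \eqref{eq:diffcondstep3} on each of its consecutive pairs. Since the partition obtained once all insertions are complete is $\lambda^{(3)}$, which has $L+M$ parts, this gives the lemma.

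For the base case the partition is $\lambda^{(2)}$, and I would simply observe that \eqref{eq:diffcondstep2} (Lemma~\ref{lemma:diffcondstep2}) is stronger than \eqref{eq:diffcondstep3}: one always has $\delta^*(c_i,c_{i+1})\ge\delta(c_i,c_{i+1})$, since the two agree when $c_i<2^{k-1}$, while when $c_i>2^{k-1}$ one has $z(c_i)=2^{k-1}\ge v(c_{i+1})$---every color lies in $\{1,\dots,2^k-1\}$, so $v(c_{i+1})\le 2^{k-1}$---which forces $\delta(c_i,c_{i+1})=0$.

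For the inductive step I would assume the current partition satisfies \eqref{eq:diffcondstep3} and insert one further part $p$ of color $2^{k-1}$, placed so as to become the first occurrence of its value. No existing part changes value or color, so the only pairs that can become inadmissible are the (at most two) pairs incident to $p$ and, if a redistribution of colors is triggered, one further pair just above. The pair just below $p$ is always fine: its required difference equals $\omega(2^{k-1})+\delta(2^{k-1},\cdot)-1=0$ (again $z(2^{k-1})=2^{k-1}\ge v(\cdot)$) and $p$ is at least as large as the part beneath it. For the pair just above $p$ there are two cases. If \eqref{eq:diffcondproblemstep3} fails, no redistribution occurs, this pair is already admissible, and nothing else changes. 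If \eqref{eq:diffcondproblemstep3} holds, a redistribution takes place, and the pair $(\tilde{\lambda}_{i-1}^{(3)},\tilde{\lambda}_i^{(3)})$ between the two re-colored parts, the pair $(\tilde{\lambda}_i^{(3)},\lambda_{i+1}^{(3)})$ just below it, and the pair $(\lambda_{i-2}^{(3)},\tilde{\lambda}_{i-1}^{(3)})$ just above it are admissible by parts (c), (d), and (e) of Lemma~\ref{lemma:redistribcolors}, respectively. These estimates are literally Lemma~\ref{lemma:redistribcolors} for the first insertion and are obtained by the same computation for the later ones, with the induction hypothesis playing the role of \eqref{eq:diffcondstep2} and $\delta^*$ read throughout as $\delta$---this is legitimate on the pairs involved, since $v(\tilde{c}_{i-1}^{(3)})=v(c_{i-1}^{(3)})$ and $z(\tilde{c}_i^{(3)})=z(c_{i-1}^{(3)})$. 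Finally, any further parts of $\mu^{(2)}$ equal to $p$ are inserted directly above the current topmost occurrence of that value; this creates a new pair of equal color-$2^{k-1}$ parts (admissible, gap $0$) and replaces the pair above it by one with the \emph{same} required difference---because the color lying just above it is either unchanged or, after a redistribution, carries $z=2^{k-1}$, so $\delta$ there does not increase---which is exactly Lemma~\ref{lemma:redistribcolors}(f). This closes the induction.

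The point I expect to need the most care is the \emph{locality} of a redistribution. In principle, re-coloring $\lambda_{i-1}^{(3)}$ as $\tilde{\lambda}_{i-1}^{(3)}$ could spoil the pair $(\lambda_{i-2}^{(3)},\lambda_{i-1}^{(3)})$ above it, raising the possibility of a cascade of redistributions running up the partition. This does not happen: the redistribution strips the $j$ smallest powers of $2$ off $c_{i-1}^{(3)}$, so $v(c_{i-1}^{(3)})$---hence the value of $\delta$ at that pair---is unchanged, while the part sizes are untouched; this is precisely Lemma~\ref{lemma:redistribcolors}(e). The only other delicate point is the bookkeeping for repeated parts of $\mu^{(2)}$, handled by Lemma~\ref{lemma:redistribcolors}(f); everything else follows at once from the preparatory lemmas.
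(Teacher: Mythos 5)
Your proposal is correct and takes essentially the same approach as the paper: both proofs rest on Lemma~\ref{lemma:redistribcolors} parts (c)--(f) together with the observation that $\delta^*\geq\delta$. The paper's proof of Lemma~\ref{lemma:diffcondstep3} is terse (a two-case argument that quietly treats Lemma~\ref{lemma:redistribcolors} as applying to every insertion, not just the first), whereas you make explicit the induction on the number of inserted parts, the locality argument that prevents a cascade of redistributions, and the extension of parts (c)--(e) from the first insertion to later ones with $\delta^*$ replaced by $\delta$ via $v(\tilde{c}_{i-1}^{(3)})=v(c_{i-1}^{(3)})$ and $z(\tilde{c}_i^{(3)})=z(c_{i-1}^{(3)})$; these are all points the paper leaves implicit, so your write-up is a sound and somewhat more careful presentation of the same argument.
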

\begin{proof}
Let $j\in\{1,\dots,L+M-1\}.$ Cases where a redistribution of colors occured between $\lambda_j^{(3)}$ and $\lambda_{j+1}^{(3)}$ have been treated above in point (c) of Lemma~\ref{lemma:redistribcolors}, and we have
\[
\lambda_j^{(3)}-\lambda_{j+1}^{(3)} \geq \omega(c_j^{(3)})+ \delta(c_j^{(3)},c_{j+1}^{(3)})-1.
\]
Otherwise, by \eqref{eq:diffcondstep2} and by points (d) and (e) of Lemma~\ref{lemma:redistribcolors}, we have
\begin{align*}
\lambda_j^{(3)}-\lambda_{j+1}^{(3)}
	&\geq \omega(c_j^{(3)})+ \delta^*(c_j^{(3)},c_{j+1}^{(3)})-1 \\
	&\geq \omega(c_j^{(3)})+ \delta(c_j^{(3)},c_{j+1}^{(3)})-1,
\end{align*}
since $c_j^{(3)} < 2^{k-1}$ is immediate and $c_j^{(3)} > 2^{k-1}$ implies that $\delta^*(c_j^{(3)},c_{j+1}^{(3)}) = \delta(c_j^{(3)}-2^{k-1},c_{j+1}^{(3)}) \geq \delta(c_j^{(3)},c_{j+1}^{(3)}).$
\end{proof}
\begin{lemma}\label{lemma:smallestpart3}
The smallest part in $\lambda^{(3)}$ satisfies
\begin{equation}\label{eq:smallestpart3}
\lambda_{L+M}^{(3)} \geq \omega(c_{L+M}^{(3)}).
\end{equation}
\end{lemma}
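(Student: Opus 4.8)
The plan is to prove a statement stronger than \eqref{eq:smallestpart3}, by induction on the number of parts already inserted during Step~3: \emph{at every intermediate stage of Step~3, every part $\lambda_t^{(3)}$ of the current partition satisfies $\lambda_t^{(3)}\geq\omega(c_t^{(3)})$.} Lemma~\ref{lemma:smallestpart3} is then just the instance $t=L+M$ applied once all insertions are complete. For the base case, when Step~3 starts we have $\lambda^{(3)}=\lambda^{(2)}$, so it suffices to show $\lambda_t^{(2)}\geq\omega(c_t^{(2)})$ for all $t$. For $t=L$ this is precisely Lemma~\ref{lemma:smallestpart2}. For $t<L$ I would argue by downward induction on $t$: since $\delta^*\geq0$ and $\omega(c)\geq1$ for every positive colour $c$, inequality \eqref{eq:diffcondstep2} gives
\[
\lambda_t^{(2)}\;\geq\;\lambda_{t+1}^{(2)}+\omega(c_t^{(2)})-1\;\geq\;1+\omega(c_t^{(2)})-1\;=\;\omega(c_t^{(2)}).
\]

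For the inductive step, assume the property holds for the current $\lambda^{(3)}$ and perform one further operation. Inserting a part $p$ of colour $2^{k-1}$ only adds the single inequality $p\geq\omega(2^{k-1})=1$: this holds because $p$ is a part of the partition $\mu^{(2)}$, whose smallest part equals $\mu_M^{(1)}-L\geq1$ (the parts of $\mu^{(1)}$ are distinct and exceed $L$, and the staircase removed in Step~2 is $(L+M-1,\dots,L)$); every other part and colour is untouched, and an additional occurrence of the same value is inserted, by point (f) of Lemma~\ref{lemma:redistribcolors}, without a redistribution and again with size $\geq1$.

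The remaining case is a redistribution of colours between $\lambda_{i-1}^{(3)}$ and the freshly inserted part $\lambda_i^{(3)}=p$. This modifies only the two colours at positions $i-1$ and $i$, leaving all sizes fixed, so every part other than these two inherits its inequality; moreover $\tilde\lambda_{i-1}^{(3)}=\lambda_{i-1}^{(3)}$ and $\tilde\lambda_i^{(3)}=p=\lambda_{i-1}^{(3)}-j$ with $j\geq1$, while $\omega(\tilde c_{i-1}^{(3)})=j+1$ and $\omega(\tilde c_i^{(3)})=\omega(c_{i-1}^{(3)})-j$ as recorded in the proof of Lemma~\ref{lemma:redistribcolors}. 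Using the inductive hypothesis $\lambda_{i-1}^{(3)}\geq\omega(c_{i-1}^{(3)})$ one obtains
\[
\tilde\lambda_i^{(3)}=\lambda_{i-1}^{(3)}-j\geq\omega(c_{i-1}^{(3)})-j=\omega(\tilde c_i^{(3)}),\qquad
\tilde\lambda_{i-1}^{(3)}=\lambda_{i-1}^{(3)}\geq\omega(c_{i-1}^{(3)})=\omega(\tilde c_i^{(3)})+j\geq1+j=\omega(\tilde c_{i-1}^{(3)}),
\]
using $\omega(\tilde c_i^{(3)})\geq1$ in the second chain. Hence the property survives the operation.

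Since Step~3 consists of finitely many such operations, the property holds for the final $\lambda^{(3)}$, and the case $t=L+M$ is exactly \eqref{eq:smallestpart3}. The only point needing care is the $\omega$-bookkeeping across a redistribution — the identities $\omega(\tilde c_{i-1}^{(3)})=j+1$ and $\omega(\tilde c_i^{(3)})=\omega(c_{i-1}^{(3)})-j$, and the fact that one redistribution does not trigger a further one — but all of this is already contained in Lemma~\ref{lemma:redistribcolors}; the genuinely new ingredient is simply the decision to strengthen the claim from the smallest part to all parts, which is what makes the induction over insertions close.
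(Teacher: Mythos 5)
Your proof is correct but takes a genuinely different route from the paper's. The paper tracks only the smallest part through Step~3: it splits into cases according to whether any part of $\mu^{(2)}$ falls below $\lambda_L^{(2)}$, and, if the first such insertion triggers a redistribution, the paper asserts that the subsequent (strictly smaller) insertions then proceed without further redistributions, so the bottom part ends up either in colour $2^{k-1}$ or equal to $\tilde p$ with $\omega(\tilde c_p)\leq \tilde p$. That assertion is actually delicate: a chain of redistributions at the bottom is possible (e.g.\ $k=4$, bottom part $5_7$, then inserting $4_8$ and afterwards $3_8$ produces two redistributions in a row), although the conclusion of the lemma still holds. You sidestep this by strengthening the claim to ``every part of the current $\lambda^{(3)}$ satisfies $\lambda_t^{(3)}\geq\omega(c_t^{(3)})$'' and inducting over the insertions of Step~3. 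This is the right invariant: an insertion adds a single part $p\geq 1=\omega(2^{k-1})$, and a redistribution touches only the colours of $\lambda_{i-1}^{(3)}$ and $\lambda_i^{(3)}$, so the verification reduces to the two displayed inequalities that follow directly from $\lambda_{i-1}^{(3)}\geq\omega(c_{i-1}^{(3)})$ and the identities $\omega(\tilde c_{i-1}^{(3)})=j+1$, $\omega(\tilde c_i^{(3)})=\omega(c_{i-1}^{(3)})-j$ from Lemma~\ref{lemma:redistribcolors} (together with $\omega(\tilde c_i^{(3)})\geq1$, guaranteed by part~(a) of that lemma). The base case for $\lambda^{(2)}$, via Lemma~\ref{lemma:smallestpart2} and a downward induction using \eqref{eq:diffcondstep2}, is also fine. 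The advantage of your version is that it closes uniformly, including over cascading redistributions, with no special casing; the paper's version is shorter but its last sub-case glosses over exactly the situation your invariant handles for free.
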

\begin{proof}
First case, $p \geq \lambda_L^{(2)}$ for all parts $p$ in $\mu^{(2)}.$ Hence, at the end of Step 3, $\lambda_{L+M}^{(3)}=\lambda_L^{(2)} \stackrel{\eqref{eq:smallestpart2}}{\geq} \omega(c_L^{(2)})=\omega(c_{L+M}^{(3)}).$

Second case, there exist some parts in $\mu^{(2)}$ that are smaller than $\lambda_L^{(2)}.$ Let $p$ be the largest such part. If the inequality \eqref{eq:diffcondproblemstep3} is false when inserting $p,$ then $p$ keeps its original color $2^{k-1}$ and the same is true for the smaller parts (they will all keep their color $2^{k-1}$). That way, at the end of Step 3, $\lambda_{L+M}^{(3)} \geq 1=\omega(2^{k-1})=\omega(c_{L+M}^{(3)}).$ If, on the contrary, \eqref{eq:diffcondproblemstep3} is true when inserting $p,$ then it means that a redistribution of colors occurs. Call $\tilde{p}$ (resp. $\tilde{\lambda}_L^{(2)}$) and $\tilde{c}_p$ (resp. $\tilde{c}_L^{(2)}$) the part $p$ (resp. $\lambda_L^{(2)}$) and its color after the redistribution. Then,
    \[
    \omega(\tilde{c}_p) =\omega(c_L^{(2)})-(\lambda_L^{(2)}-p) \stackrel{\eqref{eq:smallestpart2}}{\leq} p = \tilde{p}.
    \]
The first equality comes from how the colors are redistributed. The last equality is true since the change in colors does not affect the value of the part.

If $p$ is the only part in $\mu^{(2)}$ smaller than $\lambda_L^{(2)},$ we are done. If there are more, the remaining parts can now be inserted without a redistribution of colors taking place. If they are all the same size as $p,$ then $\tilde{p}$ is still the smallest and we are done. If some are strictly smaller than $p,$ then the smallest part has color $2^{k-1}$ and the inequality $\lambda_{L+M}\geq\omega(2^{k-1})$ is always true.
\end{proof}
    
    \item In this last step, we add the partial staircase $\nu$ back to $\lambda^{(3)}$ as if it were a generalized staircase. To do so, for each part $p$ of $\nu,$ add $1$ to the first $p$ parts of $\lambda^{(3)}$ and overline the ($p+1$)-th part. This is possible since $\lambda^{(3)}$ has $L+M$ parts and the size of the largest part in $\nu$ is $L+M-1.$ The partition obtained is denoted $\lambda^{(4)}.$
\begin{example}
Consider the partition $\lambda^{(3)}=(10_3,8_5,7_5,6_2,5_1,4_5,2_4,2_4,1_4,1_2,1_2,1_1)$ obtained in the previous example and the partial staircase $\nu=(11,10,9,8,7,6,3,1,0)$ obtained in the one before that, then add $\nu$ to $\lambda^{(3)}$:
\begin{align*}
(\lambda^{(3)},\nu) &= ((10_3,8_5,7_5,6_2,5_1,4_5,2_4,2_4,1_4,1_2,1_2,1_1),({11},10,9,8,7,6,3,1,0)) \\
	&\mapsto (({11}_3,{9}_5,{8}_5,{7}_2,{6}_1,{5}_5,{3}_4,{3}_4,{2}_4,{2}_2,{2}_2,{\overline{\textcolor{black}{1}}}_1),({10},9,8,7,6,3,1,0)) \\
	&\mapsto (({12}_3,{10}_5,{9}_5,{8}_2,{7}_1,{6}_5,{4}_4,{4}_4,{3}_4,{3}_2,{\overline{\textcolor{black}{2}}}_2,\overline{1}_1),(9,8,7,6,3,1,0)) \\
	&\dots \\
	&\mapsto (\overline{18}_3,\overline{15}_5,14_5,\overline{12}_2,11_1,10_5,\overline{7}_4,\overline{6}_4,\overline{4}_4,\overline{3}_2,\overline{2}_2,\overline{1}_1) \\
	&= \lambda^{(4)}.
\end{align*}
It can easily be verified that $\lambda^{(4)}$ is in $\mathfrak{S}(6,4,6;3,103).$
\end{example}
\begin{lemma}\label{lemma:diffcondstep4}
The following difference condition is satisfied by $\lambda^{(4)}$:
\begin{equation}\label{eq:diffcondstep4}
\lambda_i^{(4)}-\lambda_{i+1}^{(4)}\geq\omega(c_i^{(4)})+\delta(c_i^{(4)},c_{i+1}^{(4)})-\mathbf{1}_{\lambda_{i+1}^{(4)} \text{ is not overlined}},
\end{equation}
for $i=1,\dots,L+M-1.$
\end{lemma}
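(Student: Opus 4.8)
The plan is to track exactly what Step~4 does to the parts, colours and overlines of $\lambda^{(3)}$, and then reduce \eqref{eq:diffcondstep4} to Lemma~\ref{lemma:diffcondstep3}. First I would record the structural features of Step~4. It leaves every colour untouched, so $c_i^{(4)}=c_i^{(3)}$, and therefore $\omega(c_i^{(4)})=\omega(c_i^{(3)})$ and $\delta(c_i^{(4)},c_{i+1}^{(4)})=\delta(c_i^{(3)},c_{i+1}^{(3)})$ for all $i$; it does not change the number of parts, which stays $L+M$, since adding the partial staircase only enlarges existing parts and overlines some of them; and the partial staircase $\nu$ has \emph{distinct} parts, because its staircase piece $\nu'=(L+M-1,\dots,L)$ has distinct parts all $\geq L$ while $\nu''$ has distinct parts (the positions of the overlined parts of $\lambda^{(1)}$, each decreased by $1$) all $\leq L-1$, so the two pieces cannot overlap.

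Next I would convert the recursive description of Step~4 into closed form. Put $a_i \coloneqq \#\{\,p : p \text{ is a part of } \nu,\ p \geq i\,\}$. Adding $1$ to the first $p$ parts, once for each part $p$ of $\nu$, gives $\lambda_i^{(4)} = \lambda_i^{(3)} + a_i$; and, since $\lambda^{(3)}$ carries no overlines and $\nu$ has distinct parts, the part $\lambda_i^{(4)}$ is overlined precisely when $i-1$ is a part of $\nu$ (we overline the $(p+1)$-st part for each $p\in\nu$, and $i\le L+M-1$ keeps this well defined, as noted in Step~4). Since $a_i - a_{i+1} = \#\{p\in\nu : p=i\} = \mathbf{1}_{i\in\nu}$, I obtain, for $i=1,\dots,L+M-1$,
\[
\lambda_i^{(4)} - \lambda_{i+1}^{(4)} = \bigl(\lambda_i^{(3)} - \lambda_{i+1}^{(3)}\bigr) + \mathbf{1}_{i\in\nu},
\qquad
\mathbf{1}_{\lambda_{i+1}^{(4)} \text{ is not overlined}} = 1 - \mathbf{1}_{i\in\nu}.
\]

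Finally I would substitute these into \eqref{eq:diffcondstep4} and split on whether $i\in\nu$. If $i\in\nu$, the right-hand side of \eqref{eq:diffcondstep4} equals $\omega(c_i^{(3)})+\delta(c_i^{(3)},c_{i+1}^{(3)})$ and the left-hand side equals $(\lambda_i^{(3)}-\lambda_{i+1}^{(3)})+1$, so the inequality is Lemma~\ref{lemma:diffcondstep3} with $1$ added to both sides. If $i\notin\nu$, the right-hand side equals $\omega(c_i^{(3)})+\delta(c_i^{(3)},c_{i+1}^{(3)})-1$ and the left-hand side equals $\lambda_i^{(3)}-\lambda_{i+1}^{(3)}$, which is Lemma~\ref{lemma:diffcondstep3} verbatim; in both cases \eqref{eq:diffcondstep4} holds. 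I expect the only delicate point to be the second step: writing down correctly how ``adding the partial staircase as a generalized staircase'' acts on consecutive differences and on overlines, and in particular using the distinctness of the parts of $\nu$ to guarantee that each such part produces a single overline and shifts each difference by at most $1$. Everything after that is the one-line case check against Lemma~\ref{lemma:diffcondstep3}.
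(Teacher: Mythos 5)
Your proposal is correct and follows essentially the same approach as the paper: a case split on whether $\lambda_{i+1}^{(4)}$ is overlined (equivalently, on whether $i\in\nu$) and a reduction to Lemma~\ref{lemma:diffcondstep3}. You simply make explicit, via the counting function $a_i$ and the distinctness of the parts of $\nu$, two facts that the paper's proof asserts without comment, namely that adding the partial staircase increments $\lambda_i^{(3)}-\lambda_{i+1}^{(3)}$ by exactly $\mathbf{1}_{i\in\nu}$ and overlines $\lambda_{i+1}^{(4)}$ exactly when $i\in\nu$.
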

\begin{proof}
 If $\lambda_{i+1}^{(4)}$ is overlined, then we have
    \begin{align*}
    \lambda_i^{(4)}-\lambda_{i+1}^{(4)}
    &=\lambda_i^{(3)}+1-\lambda_{i+1}^{(3)} \\
    &\stackrel{\eqref{eq:diffcondstep3}}{\geq}\omega(c_i^{(3)})+\delta(c_i^{(3)},c_{i+1}^{(3)}) \\
    &=\omega(c_i^{(4)})+\delta(c_i^{(4)},c_{i+1}^{(4)})-\mathbf{1}_{\lambda_{i+1}^{(4)} \text{ is not overlined}},
    \end{align*}
and if $\lambda_{i+1}^{(4)}$ is not overlined, then we have
    \begin{align*}
    \lambda_i^{(4)}-\lambda_{i+1}^{(4)}
    &=\lambda_i^{(3)}-\lambda_{i+1}^{(3)} \\
    &\stackrel{\eqref{eq:diffcondstep3}}{\geq}\omega(c_i^{(3)})+\delta(c_i^{(3)},c_{i+1}^{(3)})-1 \\
    &=\omega(c_i^{(4)})+\delta(c_i^{(4)},c_{i+1}^{(4)})-\mathbf{1}_{\lambda_{i+1}^{(4)} \text{ is not overlined}}.
    \end{align*}
\end{proof}
\begin{lemma}\label{lemma:smallestpart4}
The smallest part in $\lambda^{(4)}$ satisfies
\begin{equation}\label{eq:smallestpart4}
\lambda_L^{(4)} \geq \omega(c_L^{(4)}).
\end{equation}
\end{lemma}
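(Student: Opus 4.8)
The plan is to track the smallest part of $\lambda^{(4)}$ through Step~4 and compare it with the corresponding statement for $\lambda^{(3)}$, which was already established in Lemma~\ref{lemma:smallestpart3}. Recall that $\lambda^{(2)}$ has $L$ parts, that $\mu^{(2)}$ has $M$ parts, and that Step~3 inserts all of them, so $\lambda^{(3)}$ has $L+M$ parts; its smallest part satisfies $\lambda_{L+M}^{(3)}\geq\omega(c_{L+M}^{(3)})$ by \eqref{eq:smallestpart3}. Step~4 does not change the number of parts, so $\lambda^{(4)}$ also has $L+M$ parts, and the smallest part is $\lambda_{L+M}^{(4)}$ (this is what is denoted $\lambda_L^{(4)}$ in the statement, following the same index convention as in condition (i) of Definition~\ref{def:Soverlined}).

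The key observation is that Step~4 leaves the smallest part untouched, both in value and in color. Indeed, by construction of Step~2 the partial staircase $\nu$ is the concatenation of $\nu'=(L+M-1,L+M-2,\dots,L)$ with $\nu''$, whose largest part is at most $L-1$; hence every part $p$ of $\nu$ satisfies $p\leq L+M-1$. Processing such a $p$ in Step~4 adds $1$ to the first $p$ parts of $\lambda^{(3)}$ and overlines the $(p+1)$-th part; since $p\leq L+M-1$, the $(L+M)$-th part is never among the first $p$ parts, so its value is never increased, and Step~4 never alters colors. The only effect that can reach the smallest part is that it becomes overlined, in the single case $p=L+M-1$, which is irrelevant to the inequality being proved. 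Therefore $\lambda_{L+M}^{(4)}=\lambda_{L+M}^{(3)}$ and $c_{L+M}^{(4)}=c_{L+M}^{(3)}$, and we conclude
\[
\lambda_{L+M}^{(4)}=\lambda_{L+M}^{(3)}\stackrel{\eqref{eq:smallestpart3}}{\geq}\omega\!\left(c_{L+M}^{(3)}\right)=\omega\!\left(c_{L+M}^{(4)}\right).
\]

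I do not expect any real obstacle here: all the substance is already contained in Lemma~\ref{lemma:smallestpart3}, and Step~4 is harmless for the smallest part because the staircase $\nu$ is, by design, one part too short to reach the $(L+M)$-th row. The only point requiring a moment of care is precisely this off-by-one: one must invoke the fact that $\max\nu=L+M-1$ (coming from the size of $\nu'$ in Step~2) to guarantee that the last row is never shifted, and hence that the bound transfers verbatim from $\lambda^{(3)}$ to $\lambda^{(4)}$.
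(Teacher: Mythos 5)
Your proof is correct and follows the same route as the paper: observe that Step~4 cannot increase the value or alter the color of the $(L+M)$-th part (only possibly overline it), and then transfer the bound directly from Lemma~\ref{lemma:smallestpart3}. You simply spell out the off-by-one reason (every part of $\nu$ is at most $L+M-1$) that the paper leaves implicit, which is a harmless and accurate amount of extra detail.
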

\begin{proof}
When the partial staircase is added to $\lambda^{(3)},$ the only possible change to the smallest part is the addition of an overline; its color and size remain unchanged. Therefore,
\[
\lambda_L^{(4)}=\lambda_L^{(3)} \stackrel{\eqref{eq:smallestpart3}}{\geq} \omega(c_L^{(3)})=\omega(c_L^{(4)}).
\]
\end{proof}
\end{enumerate}
This completes the procedure. It remains to verify that the resulting overpartition $\lambda^{(4)}$ satisfies all the necessary conditions to belong to $\mathfrak{S}(x_1,\dots,x_k;m,n).$ For this verification, we first establish the following lemma.
\begin{lemma}\label{lemma:fourthcond}
Let $M$ denote the number of parts in the partition $\mu^{(1)}.$ Recall that $s_{k-1}\coloneqq\sum_{r=1}^{k-2} V_{2^r}$ is the minimal number of smallest parts of $\lambda$ which are overlined (from the fourth condition of Definition~\ref{def:Soverlined}). Define $s\coloneqq\sum_{r=1}^{k-1} V^{(4)}_{2^r}.$ Then, $s=s_{k-1}+M.$
\end{lemma}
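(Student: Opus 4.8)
The plan is to recognize $s$ as a simple count. Since every color appearing in $\lambda^{(4)}$ lies in $\{1,\dots,2^k-1\}$, its $v$-value is a power of $2$ between $2^0$ and $2^{k-1}$, so $s=\sum_{r=1}^{k-1}V^{(4)}_{2^r}$ is exactly the number of parts $\lambda_i^{(4)}$ with $v(c_i^{(4)})\ge 2$. Likewise, every color of $\lambda$ lies in $\{1,\dots,2^{k-1}-1\}$, so its $v$-value is at most $2^{k-2}$, and $s_{k-1}=\sum_{r=1}^{k-2}V_{2^r}$ is the number of parts of $\lambda$ with $v(c_i)\ge 2$. So the claim reduces to showing that passing from $\lambda$ to $\lambda^{(4)}$ adds exactly $M$ to the number of parts whose smallest primary color is at least $2$.

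I would then split the $L+M$ parts of $\lambda^{(4)}$ into the $L$ parts descending from $\lambda$ and the $M$ parts descending from $\mu^{(2)}$ (note that $\mu^{(2)}$ still has $M$ parts, being obtained from the $M$-part partition $\mu^{(1)}$ by deleting an $M$-part staircase $\nu'$), and track the $v$-value of each. For a part coming from $\lambda$, $v$ is invariant throughout: Steps~2 and~4 leave all colors untouched; in Step~1 a color $c<2^{k-1}$ either stays $c$ or becomes $c+2^{k-1}$, and adding $2^{k-1}>v(c)$ does not change $v(c)$; and in Step~3 the only color change a part of $\lambda$ can undergo is a redistribution in which it plays the role of the upper part $\lambda_{i-1}^{(3)}$, and Lemma~\ref{lemma:redistribcolors}(b) gives $v(\tilde c_{i-1}^{(3)})=v(c_{i-1}^{(3)})$. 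Hence among the parts of $\lambda^{(4)}$ coming from $\lambda$, exactly $s_{k-1}$ have $v\ge 2$.

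Next I would check that each of the $M$ parts coming from $\mu^{(2)}$ ends up with $v\ge 2$. Such a part is inserted in Step~3 with color $2^{k-1}$, whose $v$-value is $2^{k-1}\ge 2$; if it is ever involved in a redistribution, it is as the lower part $\lambda_i^{(3)}$, and its new color $\tilde c_i^{(3)}$ is built from the powers of $2$ of $c_{i-1}^{(3)}$ other than the $j\ge 1$ smallest, so $v(\tilde c_i^{(3)})>v(c_{i-1}^{(3)})\ge 1$, that is, $v(\tilde c_i^{(3)})\ge 2$; any subsequent redistribution only preserves this, again by Lemma~\ref{lemma:redistribcolors}(b). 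Summing the two families then gives $s=s_{k-1}+M$.

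The only delicate point is the Step~3 bookkeeping: one must be certain that a part of $\lambda$ never sheds its smallest primary color in a redistribution --- even if it takes part in several of them --- which is precisely Lemma~\ref{lemma:redistribcolors}(b), and that the strict inequality $j\ge 1$ accompanying any redistribution forces the newly inserted $2^{k-1}$-part to receive in exchange a color whose smallest primary color is still at least $2$. Everything else follows directly from the definitions and from the fact that neither removing the staircase in Step~2 nor adding it back in Step~4 alters any color.
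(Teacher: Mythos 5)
Your proof is correct and follows essentially the same route as the paper's: both arguments track the quantity $v(c_i)$ through the four steps, observe that Steps~1, 2, and 4 leave it unchanged, and use Lemma~\ref{lemma:redistribcolors}(b) to conclude that a redistribution in Step~3 preserves $v$ for the upper ($\lambda$-descended) part while assigning the inserted $\mu^{(2)}$-part a color whose smallest primary color is $\ge 2$. Your reformulation of $s$ and $s_{k-1}$ as simply ``the number of parts with $v\ge 2$'' is a slightly cleaner way to package the same bookkeeping that the paper carries out by splitting $\sum_{r=1}^{k-1}V^{(4)}_{2^r}$ into $\sum_{r=1}^{k-2}V^{(4)}_{2^r}$ (accounting for the $\lambda$-parts plus the redistributed $\mu^{(2)}$-parts) and $V^{(4)}_{2^{k-1}}$ (the non-redistributed $\mu^{(2)}$-parts); the two decompositions are equivalent.
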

\begin{proof}
First, note that $V_{2^r}=V_{2^r}^{(1)}=V_{2^r}^{(2)}$ for $r=1,\dots,k-2$ and that $V_{2^r}^{(3)}=V_{2^r}^{(4)}$ for $r=1,\dots,k-1.$ Second, note that $V_{2^{k-1}}^{(4)}$ is equal to the number of parts from $\mu^{(2)}$ inserted in $\lambda^{(2)}$ during Step 3 that did \underline{not} undergo a redistribution of colors. Each pair of parts $(\lambda^{(3)}_{i-1}, \lambda^{(3)}_i)$ that \underline{did} undergo a redistribution of colors are in one of those two forms:
  \begin{enumerate}
	        \item $v(c_{i-1}^{(3)})=1,$ so $v(\tilde{c}_{i-1}^{(3)})=1$ and $2^{k-1}>v(\tilde{c}_i^{(3)})>1.$ This means that $\lambda_{i-1}^{(3)}$ was not counted by any $V_{2^r}$ and $\tilde{\lambda}_{i-1}^{(3)}$ is still not counted by any $V_{2^r}^{(4)}.$ However, $\tilde{\lambda}_i^{(3)},$ that was not in $\lambda^{(3)}$ before the insertion, is counted by $V^{(4)}_{2^t}$ for some $t$ such that $k-1>t>0.$
	        \item $v(c_{i-1}^{(3)})=2^r$ for some $r\geq1,$ so $v(\tilde{c}_{i-1})=2^r$ and $2^{k-1}>v(\tilde{c}_i^{(3)})>2^r.$ This means that $\lambda_{i-1}^{(3)}$ was counted by $V_{2^r}$ and $\tilde{\lambda}_{i-1}^{(3)}$ is still counted by $V_{2^r}^{(4)}.$ However, $\tilde{\lambda}_i^{(3)},$ that was not in $\lambda^{(3)}$ before the insertion, is counted by $V^{(4)}_{2^t}$ for some $t$ such that $k-1>t>r.$
  \end{enumerate}
Therefore, each such pair adds $1$ to the sum and we have
	    \[
	    s_{k-1} + M = \sum_{r=1}^{k-2} V_{2^r} + M = \sum_{r=1}^{k-2} V_{2^r}^{(4)} + V_{2^{k-1}}^{(4)} = \sum_{r=1}^{k-1} V_{2^r}^{(4)} = s.
	    \]
\end{proof}
\begin{proposition}
The overpartition \(\lambda^{(4)}\) belongs to \(\mathfrak{S}(x_1,\dots,x_k;m,n)\).
\end{proposition}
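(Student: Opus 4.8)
The plan is to verify, one by one, the defining conditions of $\mathfrak{S}(x_1,\dots,x_k;m,n)$ from Definition~\ref{def:Soverlined} for $\lambda^{(4)}$, along with the requirements that $\lambda^{(4)}$ is an overpartition of $n$ and that it has exactly $m$ non-overlined parts. Half of this is already done: condition (i) is Lemma~\ref{lemma:smallestpart4} and condition (iii) is Lemma~\ref{lemma:diffcondstep4}. So the remaining tasks are: (a) that $\lambda^{(4)}$ is a legitimate overpartition of $n$ with the right number of non-overlined parts; (b) the colour count of condition (ii); and (c) the overlining of condition (iv).

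For (a), total weight is preserved at every step: in Step~1 the $p$ cells deleted from a part of $\mu$ are exactly the $p$ cells added along the first column of $\lambda$; in Step~2 the cells removed from $\lambda^{(1)}$ and from $\mu^{(1)}$ are precisely those reassembled into $\nu$; Step~3 only rearranges and recolours cells, leaving all part sizes fixed; and Step~4 reinserts $\nu$. Hence $|\lambda^{(4)}|=|\lambda|+|\mu|=\tilde n+(n-\tilde n)=n$. That $\lambda^{(4)}$ is a genuine overpartition (each value overlined at most once) follows from Lemma~\ref{lemma:diffcondstep4}: whenever $\lambda_{i+1}^{(4)}$ is overlined, the right-hand side of \eqref{eq:diffcondstep4} equals $\omega(c_i^{(4)})+\delta(c_i^{(4)},c_{i+1}^{(4)})\ge1$, so $\lambda_i^{(4)}>\lambda_{i+1}^{(4)}$ and the overlined part is the first occurrence of its value. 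For the number of non-overlined parts: $\lambda$ has $L-m$ overlined parts; Step~1 does not change overline status; Step~2 removes all $L-m$ overlines while $\nu''$ acquires $L-m$ parts; Step~3 adds $M$ non-overlined parts of colour $2^{k-1}$; and Step~4 overlines exactly one part for each of the $M+(L-m)$ parts of $\nu$, and since the parts of $\nu'$ are $\ge L$ and those of $\nu''$ are $\le L-1$ and pairwise distinct, $\nu$ has distinct parts, so these overlinings hit distinct positions. This leaves $(L+M)-(M+L-m)=m$ non-overlined parts.

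For (b), the primaries $2^1,\dots,2^{k-2}$ are neither created nor destroyed. Step~1 only adjoins the new primary $2^{k-1}$ to certain colours — which is valid because every colour of $\lambda$ lies in $\{1,\dots,2^{k-1}-1\}$ by the induction hypothesis, and which does not affect membership of smaller primaries; Steps~2 and~4 leave colours untouched; and a redistribution in Step~3, seen as a local move on the adjacent pair $(\lambda_{i-1}^{(3)},\lambda_i^{(3)})$, simply hands the label $2^{k-1}$ from the inserted part up to the part above it and passes a submultiset of that part's primaries back down, leaving the multiset of primaries carried by the two parts unchanged. Hence $x_1,\dots,x_{k-1}$ are inherited from $\lambda$, where they hold by the induction hypothesis. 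As for the primary $2^{k-1}$: in Step~1 each part $p\le L$ of $\mu$ deposits one factor $2^{k-1}$ on a part of $\lambda$; the remaining $M$ parts of $\mu$ travel as $\mu^{(1)},\mu^{(2)}$ and are inserted in Step~3, each still carrying a factor $2^{k-1}$ — relocated, perhaps, by a redistribution, but never lost (by the preceding remark and Lemma~\ref{lemma:redistribcolors}(a)); and $\mu$ has $x_k$ parts in all, so exactly $x_k$ parts of $\lambda^{(4)}$ contain $2^{k-1}$.

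For (c), by condition (iv) of the induction hypothesis for $\mathfrak{S}(x_1,\dots,x_{k-1};m,\tilde n)$, the $s_{k-1}$ smallest parts of $\lambda$ are overlined (with $s_{k-1}=\sum_{r=1}^{k-2}V_{2^r}$ as in Lemma~\ref{lemma:fourthcond}); their positions $L-s_{k-1}+1,\dots,L$ are unaffected by Step~1, so in $\lambda^{(1)}$ these overlined parts contribute to $\nu''$ the consecutive sizes $L-s_{k-1},\dots,L-1$. Together with $\nu'=(L+M-1,\dots,L)$, the partial staircase $\nu$ therefore contains the block $\{L-s_{k-1},\dots,L+M-1\}$ of $s_{k-1}+M$ consecutive integers. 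Re-adding $\nu$ in Step~4 overlines, for each value $p$ in this block, the $(p+1)$-th part of the $(L+M)$-part partition $\lambda^{(3)}$, i.e. the parts in positions $L-s_{k-1}+1,\dots,L+M$, which are exactly the $s_{k-1}+M$ smallest parts. Since $s_{k-1}+M=s$ by Lemma~\ref{lemma:fourthcond}, condition (iv) holds. Together with (a), (b), and Lemmas~\ref{lemma:smallestpart4} and~\ref{lemma:diffcondstep4}, this places $\lambda^{(4)}$ in $\mathfrak{S}(x_1,\dots,x_k;m,n)$, and — once the reversibility of the four steps is recorded — completes the induction step. I expect the colour bookkeeping of (b) to be the main obstacle, in particular verifying that a Step~3 redistribution never produces a colour with a repeated primary and never erases a factor $2^{k-1}$; fortunately Lemma~\ref{lemma:redistribcolors} already isolates precisely the facts that are needed.
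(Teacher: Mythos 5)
Your proof is correct and follows essentially the same route as the paper's: verify each condition of Definition~\ref{def:Soverlined} directly, invoking Lemmas~\ref{lemma:smallestpart4} and~\ref{lemma:diffcondstep4} for conditions (i) and (iii), tracing the colour bookkeeping through the four steps for condition (ii), and using Lemma~\ref{lemma:fourthcond} together with the structure of the partial staircase $\nu$ for condition (iv). You supply somewhat more detail than the paper on weight conservation, on why overlined parts are first occurrences, and on which positions $\nu$ overlines in Step~4, but the decomposition of the verification and the lemmas it leans on are the same (the paper's phrase ``$2^1,\dots,2^{k-2}$'' in your (b) should read ``$2^0,\dots,2^{k-2}$'', a harmless slip).
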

\begin{proof}\ \\
\begin{enumerate}
    \item[] \hl{It is an overpartition of n:} By construction.
    \item[] \hl{It is in $2^k-1$ colors:}
    \;Remember that $\lambda$ was an overpartition in $2^{k-1}-1$ colors and $\mu$ a partition in the color $2^{k-1}.$ By Step 1, we added the color $2^{k-1}$ to some of the already existing colors of $\lambda,$ creating colors in the range $\llbracket 2^{k-1}+1,2^{k}-1 \rrbracket.$ In Step 3, we inserted the parts of color $2^{k-1}$ from $\mu^{(2)}$ in $\lambda^{(2)}.$
    \item[] \hl{It has $m$ non-overlined parts:}
    \;First, note that $\lambda$ has exactly $m$ non-overlined parts by the induction hypothesis, and it is still the case for $\lambda^{(1)}.$ In Step 2, the generalized staircase formed from $\lambda^{(1)}$ has $L-m$ parts (one for each overlined part in $\lambda^{(1)}$). The staircase from $\mu^{(1)}$ has $M$ parts. After Step 3, $\lambda^{(3)}$ has $L+M$ parts. So, when $\nu$ is added back to $\lambda^{(3)}$ in Step 4, the number of parts left non-overlined is exactly $(L+M)-(L-m)-M=m.$
    \item[] \hl{The smallest part satisfies $\lambda_{L+M}^{(4)} \geq \omega(c_{L+M}^{(4)})$:} 
	\;Follows from Lemma~\ref{lemma:smallestpart4}.
    \item[] \hl{There are $x_i$ parts that have $2^{i-1}$ in their color's binary decomposition, for $i=1,\dots,k$:}
    \;By induction, it was true for $\lambda.$ Along the steps, we added $2^{k-1}$ to the color of exactly $x_k$ parts -- partly during Step 1, when each part of $\mu$ added $2^{k-1}$ to the color of one part of $\lambda,$ and the rest during Step 3 when the parts in $\mu^{(2)}$ of color $2^{k-1}$ were inserted.
    \item[] \hl{For $i=1,\dots,L+M-1$, $\lambda_i^{(4)}-\lambda_{i+1}^{(4)} \geq \omega(c_i^{(4)}) + \delta(c_i^{(4)},c_{i+1}^{(4)}) - \mathbf{1}_{\lambda_{i+1}^{(4)} \text{ is not overlined}}$:}
   \;Follows from Lemma~\ref{lemma:diffcondstep4}.
    \item[] \hl{The $s$ smallest parts are overlined, where $s\coloneqq\sum_{r=1}^{k-1} V^{(4)}_{2^r}$:}
    \;By the induction hypothesis, $\lambda$ had its $s_{k-1}\coloneqq\sum_{r=1}^{k-2} V_{2^r}$ smallest parts overlined. Hence, when creating the generalized staircase in Step 2, the $s_{k-1}$ smallest parts induced $s_{k-1}$ parts at the top of the generalized staircase in the form $(\,L-1,\,L-2,\dots,\,L-s_{k-1}\,).$ \\
    Those parts, combined with the $M$ parts created from $\mu^{(1)},$ form a staircase of $s_{k-1}+M$ parts: $(\,L+M-1,\dots,\,L,\,L-1,\dots,\,L-s_{k-1}\,).$ In Step 4, when put back in $\lambda^{(3)},$ this staircase gives an overline to the $s_{k-1}+M=s\,$ smallest parts of $\lambda^{(4)},$ where this last equality follows from Lemma~\ref{lemma:fourthcond}.
\end{enumerate}
\end{proof}
\medskip
\noindent The four steps define a function that takes an overpartition in $\mathfrak{S}(x_1,\dots,x_{k-1};m,\tilde{n})$ and a partition of $n-\tilde{n}$ into $x_k$ distinct parts in the color $2^{k-1},$ and gives an overpartition in $\mathfrak{S}(x_1,\dots,x_k;m,n).$ In fact, it is a bijection. To verify that, we give the inverse function step by step. Let us consider $\hat{\lambda}^{(4)}$ an overpartition counted by $\overline{S}(x_1,\dots,x_k;m,n)$ and denote the number of its parts by $\hat{L}.$

\begin{enumerate}[label=\textbf{Step $\hat{\arabic*}$:}]
    \setcounter{enumi}{4}
    
    \addtocounter{enumi}{-1}\item The removal of a generalized staircase from an overpartition (explained in Step 2) and the addition of a generalized staircase to a partition (explained in Step 4) are inverse operations. Therefore, to reverse Step 4, we remove from $\hat{\lambda}^{(4)}$ its generalized staircase.  We denote this generalized staircase by $\hat{\nu}$ and the remaining partition by $\hat{\lambda}^{(3)}.$

\begin{example}
Consider the overpartition
\[
\hat{\lambda}^{(4)} \coloneqq \lambda^{(4)} = (\overline{18}_3,\overline{15}_5,14_5,\overline{12}_2,11_1,10_5,\overline{7}_4,\overline{6}_4,\overline{4}_4,\overline{3}_2,\overline{2}_2,\overline{1}_1),
\]
counted by $\overline{S}(6,4,6;3,103),$ from the previous example. We remove its generalized staircase:
\begin{align*}
\hat{\lambda}^{(4)} &= ({\overline{\textcolor{black}{18}}}_3,\overline{15}_5,14_5,\overline{12}_2,11_1,10_5,\overline{7}_4,\overline{6}_4,\overline{4}_4,\overline{3}_2,\overline{2}_2,\overline{1}_1) \\
	&\mapsto ((18_3,{\overline{\textcolor{black}{15}}}_5,14_5,\overline{12}_2,11_1,10_5,\overline{7}_4,\overline{6}_4,\overline{4}_4,\overline{3}_2,\overline{2}_2,\overline{1}_1),({0})) \\
	&\mapsto (({17}_3,15_5,14_5,{\overline{\textcolor{black}{12}}}_2,11_1,10_5,\overline{7}_4,\overline{6}_4,\overline{4}_4,\overline{3}_2,\overline{2}_2,\overline{1}_1),({1},0)) \\
	&\mapsto (({16}_3,{14}_5,{13}_5,12_2,11_1,10_5,{\overline{\textcolor{black}{7}}}_4,\overline{6}_4,\overline{4}_4,\overline{3}_2,\overline{2}_2,\overline{1}_1),({3},1,0)) \\
	&\dots  \\
	&\mapsto ((10_3,8_5,7_5,6_2,5_1,4_5,2_4,2_4,1_4,1_2,1_2,1_1),(11,10,9,8,7,6,3,1,0)) \\
	&= (\hat{\lambda}^{(3)},\hat{\nu}).
\end{align*}
\end{example}

\begin{lemma}\label{lemma:hatS}
Let $\hat{s}^{(4)}\coloneqq\sum_{r=1}^{k-1} \hat{V}^{(4)}_{2^r},$ where $\hat{V}^{(4)}_{2^r}$ is the number of parts $\hat{\lambda}^{(4)}_i$ such that $v(\hat{c}^{(4)}_i)=2^r.$ Then,
\begin{enumerate}
\item[(a)] There are at most $\hat{L}$ parts in $\hat{\nu}.$
\item[(b)] The $\hat{s}^{(4)}$ largest parts of $\hat{\nu}$ form an actual staircase: $(\hat{L}-1,\hat{L}-2,\dots,\hat{L}-\hat{s}^{(4)}).$
\end{enumerate}
\end{lemma}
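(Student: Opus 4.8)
The plan is to track how the partial staircase $\nu$ was assembled in the forward direction (Step 2) and argue that its structure can be recovered from $\hat{\lambda}^{(4)}$ alone. Recall that in the forward direction $\nu = \nu' \cup \nu''$, where $\nu' = (L+M-1, L+M-2, \dots, L)$ is a genuine staircase of length $M$ sitting above $\nu''$, and $\nu''$ is a generalized staircase extracted from $\lambda^{(1)}$ whose largest part is at most $L-1$. Crucially, the $s_{k-1}$ smallest parts of $\lambda$ were overlined by the induction hypothesis, and those overlines contributed the top $s_{k-1}$ parts of $\nu''$, namely $(L-1, L-2, \dots, L-s_{k-1})$; stacking these under $\nu'$ produces a genuine staircase $(L+M-1, \dots, L, L-1, \dots, L-s_{k-1})$ of length $M + s_{k-1}$ at the top of $\nu$. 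So I would first establish part~(a): since the forward map produces $\lambda^{(4)}$ with exactly $\hat{L} = L+M$ parts, and removing a generalized staircase is the inverse of Step~4, the generalized staircase $\hat{\nu}$ recovered from $\hat{\lambda}^{(4)}$ has at most one part per part of $\hat{\lambda}^{(4)}$ (one per overlined part, plus possibly a part of size $0$), hence at most $\hat{L}$ parts — this is immediate from the definition of a generalized staircase and the fact that $\hat{\lambda}^{(4)}$ has $\hat{L}$ parts.

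For part~(b), the key identity is $\hat{s}^{(4)} = s = s_{k-1} + M$, which is exactly Lemma~\ref{lemma:fourthcond} applied to $\hat{\lambda}^{(4)}$ (noting $\hat{V}^{(4)}_{2^r} = V^{(4)}_{2^r}$ since $\hat{\lambda}^{(4)} = \lambda^{(4)}$). So it suffices to show that the $s_{k-1}+M$ largest parts of $\hat{\nu}$ form the staircase $(\hat{L}-1, \hat{L}-2, \dots, \hat{L}-(s_{k-1}+M))$. Since $\hat{\nu} = \nu$ by the reversibility of Step~4, and we observed above that $\nu$ has precisely this staircase at its top, the claim follows. The one subtlety I would spell out carefully: I must check that the recovered $\hat{\nu}$ really equals the $\nu$ used in the forward direction, i.e.\ that Step~$\hat{4}$ faithfully inverts Step~4. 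This rests on the remark in Section~\ref{subsec:caseKbigger3} that adding a generalized staircase and removing one are inverse operations, together with condition~(iv) in Definition~\ref{def:Soverlined} guaranteeing that the $\hat{s}^{(4)}$ smallest parts of $\hat{\lambda}^{(4)}$ are overlined — which is what forces the top $\hat{s}^{(4)}$ parts of the recovered generalized staircase to decrease by exactly $1$ at each step (no ``gaps'' of size $\geq 2$ appear because each of those smallest parts is overlined and contributes a part to $\hat\nu$).

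The main obstacle I anticipate is bookkeeping rather than conceptual: one must be careful that the definition of $\hat{V}^{(4)}_{2^r}$ (parts with $v(\hat c^{(4)}_i) = 2^r$, for $r = 1, \dots, k-1$) correctly counts the parts that, going backwards, will be ``un-inserted'' in Step~$\hat{3}$ or have their colors un-redistributed, and that this count matches the length of the genuine-staircase portion of $\hat\nu$. Concretely, a part $\hat\lambda^{(4)}_i$ with $v(\hat c^{(4)}_i) = 2^{k-1}$ (i.e.\ color exactly $2^{k-1}$) came from an un-redistributed insertion of a part of $\mu^{(2)}$, while parts with $v(\hat c^{(4)}_i) = 2^r$ for $1 \le r \le k-2$ but with $2^{k-1}$ in their binary decomposition came from redistributed pairs — exactly the dichotomy in the proof of Lemma~\ref{lemma:fourthcond}. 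Once that correspondence is in hand, both~(a) and~(b) are short. I would present~(a) in one sentence and devote the bulk of the argument to verifying that the top $\hat s^{(4)}$ parts of $\hat\nu$ genuinely decrease by $1$, invoking condition~(iv) of Definition~\ref{def:Soverlined} and Lemma~\ref{lemma:fourthcond}.
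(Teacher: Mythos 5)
Your proof contains the correct key observation --- that condition (iv) of Definition~\ref{def:Soverlined} guarantees the $\hat{s}^{(4)}$ smallest parts of $\hat{\lambda}^{(4)}$ are overlined, and each such overlined part at position $i$ contributes a part of size $i-1$ to the generalized staircase, directly yielding the staircase $(\hat{L}-1,\dots,\hat{L}-\hat{s}^{(4)})$. However, you present this as a secondary ``subtlety to check,'' while your primary argument is circular. You write ``$\hat{\nu}=\nu$ by the reversibility of Step~4, and we observed above that $\nu$ has precisely this staircase at its top,'' and you invoke Lemma~\ref{lemma:fourthcond} ``applied to $\hat{\lambda}^{(4)}$ (noting $\hat{V}^{(4)}_{2^r}=V^{(4)}_{2^r}$ since $\hat{\lambda}^{(4)}=\lambda^{(4)}$).'' But $\hat{\lambda}^{(4)}$ is an \emph{arbitrary} element of $\mathfrak{S}(x_1,\dots,x_k;m,n)$; Lemma~\ref{lemma:hatS} is a step in showing the inverse map is well-defined on the entire target set, so you cannot assume $\hat{\lambda}^{(4)}$ arose from the forward map applied to some $(\lambda,\mu)$ --- establishing that every such $\hat{\lambda}^{(4)}$ does arise this way is precisely the point of the inverse construction. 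The same issue affects your justification of (a), though there the conclusion survives because the bound on the number of parts follows from the definition of a generalized staircase alone, with no appeal to the forward map needed.

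The paper's proof is shorter and direct, and avoids this trap entirely: for (a), a generalized staircase extracted from an overpartition has at most one part per overlined part (at most $\hat{L}$); for (b), condition (iv) says the $\hat{s}^{(4)}$ smallest parts are overlined, and the parts of $\hat{\nu}$ arising from those overlines are exactly $(\hat{L}-1,\hat{L}-2,\dots,\hat{L}-\hat{s}^{(4)})$, the largest possible sizes in a generalized staircase with $\hat{L}$ source parts, hence its $\hat{s}^{(4)}$ largest parts. You should restructure so that this is the argument, not an afterthought, and drop the appeals to the forward construction, to $\hat{\nu}=\nu$, and to Lemma~\ref{lemma:fourthcond} (the latter is used elsewhere --- in the forward Step 4 verification --- but plays no role here). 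Your proposed discussion of which parts will be ``un-inserted'' in Step~$\hat{3}$ is likewise not needed at this stage.
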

\begin{proof}
\begin{enumerate}
\item[(a)] The generalized staircase $\hat{\nu}$ can have at most the same number of parts as the overpartition $\hat{\lambda}^{(4)}$ it comes from (including a possible part of size zero). This extremal case happens when all parts of the overpartition are overlined.
\item[(b)] Since $\hat{\lambda}^{(4)}$ is counted by $\overline{S}(x_1,\dots,x_k;m,n),$ its $\hat{s}^{(4)}$ smallest parts are overlined, by condition (iv) of Definition~\ref{def:Soverlined}. Therefore, the parts in $\hat{\nu}$ coming from those overlines form a staircase $(\hat{L}-1,\hat{L}-2,\dots,\hat{L}-\hat{s}^{(4)}).$
\end{enumerate}
\end{proof}
\begin{lemma}\label{lemma:diffcondstep5}
The partition $\hat{\lambda}^{(3)}$ satisfies the following difference condition:
    \begin{equation}\label{eq:diffcondstep5}
        \hat{\lambda}_i^{(3)} - \hat{\lambda}_{i+1}^{(3)} \geq \omega(\hat{c}_i^{(3)})+\delta(\hat{c}_i^{(3)},\hat{c}_{i+1}^{(3)}) - 1,
    \end{equation}
    for $i=1,\dots,\hat{L}-1.$
\end{lemma}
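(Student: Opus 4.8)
The plan is to run the argument of Lemma~\ref{lemma:diffcondstep4} in reverse. Recall that $\hat{\lambda}^{(3)}$ is obtained from $\hat{\lambda}^{(4)}$ by deleting its generalized staircase: for every overlined part of $\hat{\lambda}^{(4)}$ we erase the overline and subtract $1$ from every part strictly above it. The colors are left untouched, so $\hat{c}_i^{(3)}=\hat{c}_i^{(4)}$ for all $i$; and since the addition and removal of a generalized staircase are inverse operations, $\hat{\lambda}^{(3)}$ is a well-defined partition with $\hat{L}$ parts. So the only thing to check is the difference condition.

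First I would record the effect of the deletion on a single consecutive difference. Fix $i\in\{1,\dots,\hat{L}-1\}$. An overline at a position $\geq i+2$ subtracts $1$ from both $\hat{\lambda}_i^{(4)}$ and $\hat{\lambda}_{i+1}^{(4)}$, hence does not change the gap; an overline at a position $\leq i$ affects neither part; and if $\hat{\lambda}_{i+1}^{(4)}$ itself is overlined, that overline subtracts $1$ from $\hat{\lambda}_i^{(4)}$ but not from $\hat{\lambda}_{i+1}^{(4)}$. Therefore
\[
\hat{\lambda}_i^{(3)}-\hat{\lambda}_{i+1}^{(3)}=
\begin{cases}
\bigl(\hat{\lambda}_i^{(4)}-\hat{\lambda}_{i+1}^{(4)}\bigr)-1, & \text{if }\hat{\lambda}_{i+1}^{(4)}\text{ is overlined},\\
\hat{\lambda}_i^{(4)}-\hat{\lambda}_{i+1}^{(4)}, & \text{otherwise.}
\end{cases}
\]

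Next I would combine this with condition (iii) of Definition~\ref{def:Soverlined}, that is, inequality \eqref{eq:diffcondstep4} applied to $\hat{\lambda}^{(4)}$. If $\hat{\lambda}_{i+1}^{(4)}$ is overlined, the indicator in \eqref{eq:diffcondstep4} vanishes, so $\hat{\lambda}_i^{(4)}-\hat{\lambda}_{i+1}^{(4)}\geq\omega(\hat{c}_i^{(4)})+\delta(\hat{c}_i^{(4)},\hat{c}_{i+1}^{(4)})$, and subtracting $1$ yields exactly \eqref{eq:diffcondstep5}; if $\hat{\lambda}_{i+1}^{(4)}$ is not overlined, the indicator equals $1$ and the inequality is preserved verbatim. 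Rewriting $\hat{c}_i^{(4)}=\hat{c}_i^{(3)}$ and $\hat{c}_{i+1}^{(4)}=\hat{c}_{i+1}^{(3)}$ closes both cases. I do not anticipate a real obstacle: the lemma is just the converse of Lemma~\ref{lemma:diffcondstep4}, and the only point that needs a line of care is the bookkeeping of which overlines contribute to the gap between positions $i$ and $i+1$; as a byproduct the computed differences are all $\geq\omega(\hat{c}_i^{(3)})-1\geq 0$, which reconfirms that $\hat{\lambda}^{(3)}$ is genuinely weakly decreasing.
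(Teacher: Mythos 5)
Your proof is correct and follows essentially the same approach as the paper: both case-split on whether $\hat{\lambda}_{i+1}^{(4)}$ is overlined, observe that the gap $\hat{\lambda}_i^{(3)}-\hat{\lambda}_{i+1}^{(3)}$ equals $\hat{\lambda}_i^{(4)}-\hat{\lambda}_{i+1}^{(4)}$ minus the indicator of that overline, and then invoke condition~(iii) of Definition~\ref{def:Soverlined}. Your more explicit bookkeeping of which overline positions can affect the gap between $i$ and $i+1$ just spells out what the paper leaves implicit in the phrase ``the same amount has been taken away.''
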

\begin{proof}    
To verify \eqref{eq:diffcondstep5}, two cases need to be considered. First case, if $\hat{\lambda}^{(4)}_{i+1}$ was overlined, then at one point during the procedure we removed 1 from $\hat{\lambda}^{(4)}_i$ without removing anything from $\hat{\lambda}^{(4)}_{i+1},$ which implies that
    \begin{align*}
    \hat{\lambda}_i^{(3)}-\hat{\lambda}_{i+1}^{(3)}
    &=\hat{\lambda}^{(4)}_i-1-\hat{\lambda}^{(4)}_{i+1} \\
    &\geq\omega(\hat{c}^{(4)}_i)+\delta(\hat{c}^{(4)}_i,\hat{c}^{(4)}_{i+1})-\mathbf{1}_{\hat{\lambda}^{(4)}_{i+1} \text{ is not overlined}}-1 \\
    &=\omega(\hat{c}^{(4)}_i)+\delta(\hat{c}^{(4)}_i,\hat{c}^{(4)}_{i+1})-1 \\
    &=\omega(\hat{c}_i^{(3)})+\delta(\hat{c}_i^{(3)},\hat{c}_{i+1}^{(3)})-1,
    \end{align*}
where the inequality follows from condition (iii) of Definition~\ref{def:Soverlined}, since $\hat{\lambda}^{(4)}$ is in $\mathfrak{S}(x_1,\dots,x_k;m,n)$.\\
In the other case, if $\hat{\lambda}^{(4)}_{i+1}$ was not overlined, then the same amount has been taken away from $\hat{\lambda}^{(4)}_i$ and $\hat{\lambda}^{(4)}_{i+1}$, thus we have
    \begin{align*}
    \hat{\lambda}_i^{(3)}-\hat{\lambda}_{i+1}^{(3)}
    &=\hat{\lambda}^{(4)}_i-\hat{\lambda}^{(4)}_{i+1} \\
    &\geq\omega(\hat{c}^{(4)}_i)+\delta(\hat{c}^{(4)}_i,\hat{c}^{(4)}_{i+1})-\mathbf{1}_{\hat{\lambda}^{(4)}_{i+1} \text{ is not overlined}} \\
    &=\omega(\hat{c}^{(4)}_i)+\delta(\hat{c}^{(4)}_i,\hat{c}^{(4)}_{i+1})-1 \\
    &=\omega(\hat{c}_i^{(3)})+\delta(\hat{c}_i^{(3)},\hat{c}_{i+1}^{(3)})-1,
    \end{align*}
where the inequality follows from condition (iii) of Definition~\ref{def:Soverlined}, since $\hat{\lambda}^{(4)}$ is in $\mathfrak{S}(x_1,\dots,x_k;m,n)$.
\end{proof}
\begin{lemma}\label{lemma:smallestpart5}
The smallest part of $\hat{\lambda}^{(3)}$ satisfies
\begin{equation}\label{eq:smallestpart5}
\hat{\lambda}_{\hat{L}}^{(3)}\geq\omega(\hat{c}_{\hat{L}}^{(3)}).
\end{equation}
\end{lemma}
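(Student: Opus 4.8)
The plan is to mirror, in reverse, the argument used for Lemma~\ref{lemma:smallestpart4}: the point is that extracting the generalized staircase $\hat{\nu}$ from $\hat{\lambda}^{(4)}$ is an almost trivial operation as far as the smallest part is concerned. First I would observe that the smallest part of an overpartition is never located \emph{above} any overlined part, simply because nothing lies below it. Hence, when we strip off $\hat{\nu}$ part by part --- each overlined entry of $\hat{\lambda}^{(4)}$ causing us to remove its overline and subtract $1$ from every part above it --- the value of $\hat{\lambda}_{\hat{L}}^{(4)}$ is never decremented and its color is never altered. The only possible effect on the smallest part is the deletion of its own overline, in case $\hat{\lambda}_{\hat{L}}^{(4)}$ happened to be overlined. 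Since the procedure also does not delete any parts, the index $\hat{L}$ still labels the smallest part of $\hat{\lambda}^{(3)}$.

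From this observation the equalities $\hat{\lambda}_{\hat{L}}^{(3)} = \hat{\lambda}_{\hat{L}}^{(4)}$ and $\hat{c}_{\hat{L}}^{(3)} = \hat{c}_{\hat{L}}^{(4)}$ are immediate. I would then invoke condition (i) of Definition~\ref{def:Soverlined}, valid because $\hat{\lambda}^{(4)}$ is counted by $\overline{S}(x_1,\dots,x_k;m,n)$, to get $\hat{\lambda}_{\hat{L}}^{(4)} \geq \omega(\hat{c}_{\hat{L}}^{(4)})$. Chaining these together gives
\[
\hat{\lambda}_{\hat{L}}^{(3)} = \hat{\lambda}_{\hat{L}}^{(4)} \geq \omega(\hat{c}_{\hat{L}}^{(4)}) = \omega(\hat{c}_{\hat{L}}^{(3)}),
\]
which is exactly \eqref{eq:smallestpart5}.

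There is no real obstacle here; the single point deserving a careful sentence is the verification that extracting the generalized staircase leaves the bottom part untouched in value and color, which is the precise dual of the observation in the proof of Lemma~\ref{lemma:smallestpart4} that adding a partial staircase can only overline the smallest part without changing its size or color. For completeness one may also note that part (a) of Lemma~\ref{lemma:hatS} ensures $\hat{\nu}$ has at most $\hat{L}$ parts, so the extraction in Step $\hat 4$ is well defined, though this is not needed for the claim about the smallest part itself.
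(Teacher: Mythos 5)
Your argument is correct and matches the paper's proof exactly: both observe that extracting the generalized staircase can only remove an overline from the smallest part without changing its size or color, and then invoke condition (i) of Definition~\ref{def:Soverlined} for $\hat{\lambda}^{(4)}$. Your version just spells out the underlying reason (nothing lies below $\hat{\lambda}^{(4)}_{\hat L}$, so no decrement ever reaches it), which the paper states more tersely.
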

\begin{proof}
When we remove a generalized staircase from an overpartition, no changes in size or color are made to the smallest part, so
\[
\hat{\lambda}_{\hat{L}}^{(3)}=\hat{\lambda}^{(4)}_{\hat{L}} \geq \omega(\hat{c}^{(4)}_{\hat{L}})=\omega(\hat{c}_{\hat{L}}^{(3)}),
\]
where the inequality follows from condition (i) of Definition~\ref{def:Soverlined}, since $\hat{\lambda}^{(4)}$ is in $\mathfrak{S}(x_1,\dots,x_k;m,n)$.
\end{proof}

    \addtocounter{enumi}{-2}\item In Step 3, all parts from $\mu^{(2)}$, starting with the largest, are inserted in $\lambda^{(2)}$ as the first occurence of their value. If the inequality \eqref{eq:diffcondproblemstep3} is true when inserting the part, a redistribution of colors takes place. This change of colors is bijective. Indeed, according to point (g) of Lemma~\ref{lemma:redistribcolors}, after a change of colors between two parts $\lambda^{(3)}_{i-1}$ and $\lambda^{(3)}_i$, we have
\[
\tilde{\lambda}^{(3)}_{i-1}-\tilde{\lambda}^{(3)}_i < \omega(\tilde{c}^{(3)}_{i-1}) + \delta^*(\tilde{c}^{(3)}_{i-1},\tilde{c}^{(3)}_{i}) -1,
\]
but if the parts did not undergo a change of colors, we have that the gap condition is greater or equal to the right hand side of this inequality. Therefore, to reverse the process of Step 3, we analyze each part $\hat{\lambda}_i^{(3)}$, starting with the smallest. If the part is in the color $2^{k-1}$, we extract it. If not, we verify
\[
\hat{\lambda}^{(3)}_{i-1} -\hat{\lambda}^{(3)}_i  \geq \omega(\hat{c}_{i-1}^{(3)} )+\delta^*(\hat{c}_{i-1}^{(3)},\hat{c}_{i}^{(3)} )-1.
\]
If this inequality is verified, we proceed to the next part. Note that it is always verified when $\hat{c}_{i-1}^{(3)}<2^{k-1}$, by Lemma~\ref{lemma:diffcondstep5}. If the inequality is not verified, we change the colors as such: name $\tilde{\tilde{\lambda}}_{i-1}^{(3)} ,\tilde{\tilde{\lambda}}_{i}^{(3)} $ the new parts (although their values are still the same) and $\tilde{\tilde{c}}_{i-1}^{(3)} ,\tilde{\tilde{c}}_{i}^{(3)} $ their respective colors, with $\tilde{\tilde{c}}_{i-1}^{(3)}  \coloneqq \hat{c}_{i-1}^{(3)} +\hat{c}_i^{(3)} -2^{k-1}$ and $\tilde{\tilde{c}}_{i}^{(3)}  \coloneqq 2^{k-1}.$

Note that the powers of $2$ in the binary decomposition of $\hat{c}_i^{(3)}$ and $\hat{c}_{i-1}^{(3)}-2^{k-1}$ are all different (see Lemma~\ref{lemma:distinctpowersof2} below) and are only moved from one color to the other during the redistribution, thus the total number of appearance of each $2^r,\,r=1,\dots,k-1,$ is still the same.
\begin{lemma}\label{lemma:distinctpowersof2}
Let $i\in\{2,\dots,L\}.$ If $\hat{c}_{i-1}^{(3)} >2^{k-1},\ \hat{c}_i^{(3)} \neq 2^{k-1}$ and
\[
\begin{cases}
&\hat{\lambda}_{i-1}^{(3)} -\hat{\lambda}_i^{(3)}  \geq \omega(\hat{c}_{i-1}^{(3)} )+\delta(\hat{c}_{i-1}^{(3)},\hat{c}_i^{(3)} )-1,\\
&\hat{\lambda}_{i-1}^{(3)} -\hat{\lambda}_i^{(3)}  < \omega(\hat{c}_{i-1}^{(3)} )+\delta(\hat{c}_{i-1}^{(3)} -2^{k-1},\hat{c}_i^{(3)} )-1,
\end{cases}
\]
then,
\begin{enumerate}
\item[1.] $z(\hat{c}_{i-1}^{(3)} -2^{k-1})<v(\hat{c}_i^{(3)}),$
\item[2.] $\hat{\lambda}_{i-1}^{(3)} -\hat{\lambda}_i^{(3)} = \omega(\hat{c}_{i-1}^{(3)} )-1,$
\item[3.] $\tilde{\tilde{\lambda}}_{i-1}^{(3)} -\tilde{\tilde{\lambda}}_i^{(3)}  < \omega(\tilde{\tilde{c}}_{i-1}^{(3)} )+\delta(\tilde{\tilde{c}}_{i-1}^{(3)},\tilde{\tilde{c}}_i^{(3)} )-1.$
\end{enumerate}
\end{lemma}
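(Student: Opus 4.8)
\emph{Proof proposal.} Both hypotheses are statements about the binary decompositions of $\hat{c}_{i-1}^{(3)}$ and $\hat{c}_i^{(3)}$, so the plan is to unwind them using that $2^{k-1}$ is the \emph{largest} primary color available. First I would record two elementary facts. Since $\hat{c}_{i-1}^{(3)} > 2^{k-1}$, its binary decomposition must contain $2^{k-1}$ (a color exceeding $2^{k-1}$ cannot be a sum of primary colors smaller than $2^{k-1}$), so $z(\hat{c}_{i-1}^{(3)}) = 2^{k-1}$ and $\omega(\hat{c}_{i-1}^{(3)}) \geq 2$; and since $\hat{c}_i^{(3)}$ is a color in $\{1,\dots,2^k-1\}$ with $\hat{c}_i^{(3)} \neq 2^{k-1}$, it contains a primary color strictly below $2^{k-1}$, so $v(\hat{c}_i^{(3)}) \leq 2^{k-2} < 2^{k-1}$. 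Consequently $z(\hat{c}_{i-1}^{(3)}) = 2^{k-1} > v(\hat{c}_i^{(3)})$, hence $\delta(\hat{c}_{i-1}^{(3)},\hat{c}_i^{(3)}) = 0$, and the first hypothesis simplifies to $\hat{\lambda}_{i-1}^{(3)} - \hat{\lambda}_i^{(3)} \geq \omega(\hat{c}_{i-1}^{(3)}) - 1$.

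For the first claim I argue by contradiction: if $\delta(\hat{c}_{i-1}^{(3)} - 2^{k-1}, \hat{c}_i^{(3)}) = 0$, then the second hypothesis reads $\hat{\lambda}_{i-1}^{(3)} - \hat{\lambda}_i^{(3)} < \omega(\hat{c}_{i-1}^{(3)}) - 1$, contradicting the simplified first hypothesis; hence $\delta(\hat{c}_{i-1}^{(3)} - 2^{k-1}, \hat{c}_i^{(3)}) = 1$, which by definition of $\delta$ is exactly $z(\hat{c}_{i-1}^{(3)} - 2^{k-1}) < v(\hat{c}_i^{(3)})$. The second claim is then immediate: with $\delta(\hat{c}_{i-1}^{(3)} - 2^{k-1}, \hat{c}_i^{(3)}) = 1$ the second hypothesis becomes $\hat{\lambda}_{i-1}^{(3)} - \hat{\lambda}_i^{(3)} < \omega(\hat{c}_{i-1}^{(3)})$, and an integer that is at least $\omega(\hat{c}_{i-1}^{(3)}) - 1$ and strictly less than $\omega(\hat{c}_{i-1}^{(3)})$ must equal $\omega(\hat{c}_{i-1}^{(3)}) - 1$.

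For the third claim, the redistribution leaves the parts' values unchanged, so $\tilde{\tilde{\lambda}}_{i-1}^{(3)} - \tilde{\tilde{\lambda}}_i^{(3)} = \hat{\lambda}_{i-1}^{(3)} - \hat{\lambda}_i^{(3)} = \omega(\hat{c}_{i-1}^{(3)}) - 1$ by the second claim, while $\tilde{\tilde{c}}_{i-1}^{(3)} = (\hat{c}_{i-1}^{(3)} - 2^{k-1}) + \hat{c}_i^{(3)}$ and $\tilde{\tilde{c}}_i^{(3)} = 2^{k-1}$. The first claim says the binary supports of $\hat{c}_{i-1}^{(3)} - 2^{k-1}$ and $\hat{c}_i^{(3)}$ are disjoint, so $\omega(\tilde{\tilde{c}}_{i-1}^{(3)}) = (\omega(\hat{c}_{i-1}^{(3)}) - 1) + \omega(\hat{c}_i^{(3)})$ and $z(\tilde{\tilde{c}}_{i-1}^{(3)}) = \max\{z(\hat{c}_{i-1}^{(3)} - 2^{k-1}),\, z(\hat{c}_i^{(3)})\}$, whose first entry is at most $2^{k-2}$. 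I then split on the size of $\hat{c}_i^{(3)}$. If $\hat{c}_i^{(3)} < 2^{k-1}$, then $z(\tilde{\tilde{c}}_{i-1}^{(3)}) < 2^{k-1} = v(2^{k-1})$, so $\delta(\tilde{\tilde{c}}_{i-1}^{(3)}, 2^{k-1}) = 1$ and the desired inequality becomes $\omega(\hat{c}_{i-1}^{(3)}) - 1 < \omega(\hat{c}_{i-1}^{(3)}) - 1 + \omega(\hat{c}_i^{(3)})$, i.e.\ $\omega(\hat{c}_i^{(3)}) > 0$. If $\hat{c}_i^{(3)} > 2^{k-1}$, then $z(\hat{c}_i^{(3)}) = 2^{k-1}$, so $\delta(\tilde{\tilde{c}}_{i-1}^{(3)}, 2^{k-1}) = 0$ and the inequality becomes $\omega(\hat{c}_{i-1}^{(3)}) - 1 < \omega(\hat{c}_{i-1}^{(3)}) - 2 + \omega(\hat{c}_i^{(3)})$, i.e.\ $\omega(\hat{c}_i^{(3)}) > 1$; both hold, the first trivially and the second because any color exceeding $2^{k-1}$ contains $2^{k-1}$ together with at least one further primary color. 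The only delicate point is this last case: one must notice that when $\hat{c}_i^{(3)} > 2^{k-1}$ the loss of the $\delta$ term is exactly compensated by the automatic bound $\omega(\hat{c}_i^{(3)}) \geq 2$. Everything else is routine bookkeeping with $\omega$, $v$, $z$, and $\delta$.
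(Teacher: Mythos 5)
Your argument is correct and follows the same route as the paper: point 1 is proved by noting that $\delta(\hat{c}_{i-1}^{(3)},\hat{c}_i^{(3)})=0$ forces $\delta(\hat{c}_{i-1}^{(3)}-2^{k-1},\hat{c}_i^{(3)})=1$ to avoid contradiction between the two hypotheses, point 2 is the integer squeeze, and point 3 splits on whether $2^{k-1}$ appears in $\hat{c}_i^{(3)}$. You record a few more preparatory observations (e.g.\ $v(\hat{c}_i^{(3)})\leq 2^{k-2}$ and the disjointness of binary supports) but the structure, case split, and key inequalities match the paper's proof.
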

\begin{proof}
\begin{enumerate}
\item[1.] Since $\hat{c}_{i-1}^{(3)} >2^{k-1},$ we have $\delta(\hat{c}_{i-1}^{(3)},\hat{c}_i^{(3)})=0.$ Hence, $\delta(\hat{c}_{i-1}^{(3)} -2^{k-1},\hat{c}_i^{(3)})$ must be equal to $1$, i.e $z(\hat{c}_{i-1}^{(3)} -2^{k-1})<v(\hat{c}_i^{(3)}).$
\item[2.] Because of the first point, $\omega(\hat{c}_{i-1}^{(3)} )-1 \leq \hat{\lambda}_{i-1}^{(3)} -\hat{\lambda}_i^{(3)} < \omega(\hat{c}_{i-1}^{(3)} ).$
\item[3.] First case, $2^{k-1}$ is in the binary decomposition of $\hat{c}_i^{(3)}$. Then, $\delta(\tilde{\tilde{c}}_{i-1}^{(3)},\tilde{\tilde{c}}_{i}^{(3)})=0$ and $\omega(\hat{c}_i^{(3)})\geq2,$ as $\hat{c}_i^{(3)}\neq2^{k-1}$. Hence, $\omega(\tilde{\tilde{c}}_{i-1}^{(3)})>\omega(\hat{c}_{i-1}^{(3)}).$ Because of the second point of this lemma, we have the strict inequality. Second case, $2^{k-1}$ is not in the binary decomposition of $\hat{c}_i^{(3)}$. Then, $\delta(\tilde{\tilde{c}}_{i-1}^{(3)},\tilde{\tilde{c}}_{i}^{(3)})=1$ and $\omega(\hat{c}_i^{(3)})\geq1.$ Hence, $\omega(\tilde{\tilde{c}}_{i-1}^{(3)})\geq\omega(\hat{c}_{i-1}^{(3)}).$ Because of the second point of this lemma, we have the strict inequality. 
\end{enumerate}
\end{proof}
By the third point of Lemma~\ref{lemma:distinctpowersof2} and by Lemma~\ref{lemma:diffcondstep6} below, we have that the change of colors is indeed bijective.

All the extracted parts are used to create a partition $\hat{\mu}^{(2)}$ which is entirely colored with $2^{k-1}.$ The remaining parts in $\hat{\lambda}^{(3)}$ also form a partition, denoted $\hat{\lambda}^{(2)}.$

\begin{example}
Consider the partition $\hat{\lambda}^{(3)}=(10_3,8_5,7_5,6_2,5_1,4_5,2_4,2_4,1_4,1_2,1_2,1_1)$ obtained in the previous example. The smallest parts $(1_2,1_2,1_1)$ do not change, and the parts $(2_4,2_4,1_4)$ are all extracted and moved to $\hat{\mu}^{(2)}$:
\[
\hat{\lambda}^{(2)}=(10_3,8_5,7_5,6_2,5_1,4_5,1_2,1_2,1_1)\quad\text{and}\quad\hat{\mu}^{(2)}=(2_4,2_4,1_4).
\]
The pair $(4_5,1_2)$ is such that the color of the first part is larger than $2^{k-1}=4$, but it does not require a redistribution of colors, since $4-1=3\geq2=\omega(5)+\delta(5-4,2)-1.$ The part $5_1$ remains the same.

The interesting pair to look at is the following: $({7_5},{6_2}).$ The color of the first part is larger than $4$, and $7-6=1<2=\omega(5)+\delta(5-4,2)-1.$ Therefore, a redistribution of colors occurs. The new color for the part $7_5$ is $5+2-4=3.$ For the part $6_2,$ the new color is $4$:
\[
\hat{\lambda}^{(2)}=(10_3,8_5,\underbrace{7_3,6_4},5_1,4_5,1_2,1_2,1_1)\quad\text{and}\quad\hat{\mu}^{(2)}=(2_4,2_4,1_4).
\]
Finally, the part $6_4$ is extracted and moved to $\hat{\mu}^{(2)}$ since it is in the color $4.$ The pair $(8_5,7_3)$ is again a case where the color of the first part is larger than $4$, but there is no change of colors, since $8-7=1=\omega(5)+\delta(5,3)-1.$ The largest parts, $(10_3,8_5),$ remain also unchanged. At the end of this step, we have the partitions:
\[
\hat{\lambda}^{(2)}=(10_3,8_5,7_3,5_1,4_5,1_2,1_2,1_1),\ \hat{\mu}^{(2)}=(6_4,2_4,2_4,1_4)\ \text{and}\ \hat{\nu}=(11,10,9,8,7,6,3,1,0).
\]
\end{example}

Let $\hat{M}$ denote the number of parts in $\hat{\mu}^{(2)}.$
\begin{lemma}\label{lemma:diffcondstep6}
The partition $\hat{\lambda}^{(2)}$ satisfies the following difference condition:
    \begin{equation}\label{eq:diffcondstep6}
    \hat{\lambda}_i^{(2)}-\hat{\lambda}_{i+1}^{(2)} \geq \omega(\hat{c}_i^{(2)})+\delta^*(\hat{c}_i^{(2)},\hat{c}_{i+1}^{(2)})-1,
    \end{equation}
    for $i=1,\dots,\hat{L}-\hat{M}-1.$
\end{lemma}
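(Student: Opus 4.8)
The plan is to exploit that the $\delta^*$-bound of \eqref{eq:diffcondstep6} is essentially \emph{enforced} by the construction of Step~$\hat 3$: whenever the algorithm processes a part and the inequality $\hat\lambda_{i-1}^{(3)}-\hat\lambda_i^{(3)}\ge\omega(\hat c_{i-1}^{(3)})+\delta^*(\hat c_{i-1}^{(3)},\hat c_i^{(3)})-1$ fails, a reverse redistribution recolours the lower part with $2^{k-1}$ and then extracts it. So my first step is to show that every adjacent pair surviving into $\hat\lambda^{(2)}$ had its $\delta^*$-gap checked and found valid at the moment the smaller of the two was processed. I would fix an adjacent pair $\hat\lambda_\ell^{(2)},\hat\lambda_{\ell+1}^{(2)}$ of $\hat\lambda^{(2)}$; since parts are processed from smallest to largest and $\hat\lambda_{\ell+1}^{(2)}$ was not extracted, and since once a part survives no later reverse redistribution can alter the colour of a part lying above it without extracting the survivor, the part sitting immediately above $\hat\lambda_{\ell+1}^{(2)}$ at that moment — after deletion of the colour-$2^{k-1}$ parts lying in between — is exactly $\hat\lambda_\ell^{(2)}$ with its final colour; the check run at that moment succeeded (otherwise $\hat\lambda_{\ell+1}^{(2)}$ would have been recoloured $2^{k-1}$ and extracted), and it is precisely \eqref{eq:diffcondstep6}. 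Note that $\delta^*$ is defined on all adjacent pairs of $\hat\lambda^{(2)}$ because $\hat\lambda^{(2)}$ contains no part of colour $2^{k-1}$.

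The second step treats the one family of adjacencies that is not tested directly, namely those created as a side effect of a reverse redistribution. If such a redistribution occurs at $(\hat\lambda_{i-1}^{(3)},\hat\lambda_i^{(3)})$, then $\hat\lambda_i^{(3)}$ is recoloured $2^{k-1}$ and extracted while $\hat\lambda_{i-1}^{(3)}$ acquires colour $\hat c_{i-1}^{(3)}+\hat c_i^{(3)}-2^{k-1}$ and becomes adjacent, below, to the former $\hat\lambda_{i+1}^{(3)}$ and, above, to its old upper neighbour. For these two new adjacencies I would run the reverse-direction analogues of points~(d) and~(e) of Lemma~\ref{lemma:redistribcolors}: Lemma~\ref{lemma:distinctpowersof2} gives $z(\hat c_{i-1}^{(3)}-2^{k-1})<v(\hat c_i^{(3)})$ and $\hat\lambda_{i-1}^{(3)}-\hat\lambda_i^{(3)}=\omega(\hat c_{i-1}^{(3)})-1$, hence $\omega$, $v$ and $z$ of $\hat c_{i-1}^{(3)}+\hat c_i^{(3)}-2^{k-1}$; telescoping $\hat\lambda_{i-1}^{(3)}-\hat\lambda_{i+1}^{(3)}=(\hat\lambda_{i-1}^{(3)}-\hat\lambda_i^{(3)})+(\hat\lambda_i^{(3)}-\hat\lambda_{i+1}^{(3)})$, bounding the first summand by $\omega(\hat c_{i-1}^{(3)})-1$ and the second by \eqref{eq:diffcondstep5}, and using that $z$ of the first colour is unchanged so the relevant $\delta$ does not move, yields the required $\delta^*$-inequality below, while the neighbour above is the easier reverse of point~(e) because $v$ of the recoloured part equals $v(\hat c_{i-1}^{(3)})$.

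The part I expect to be the real obstacle is making the first step airtight, in particular the interaction with runs of extracted colour-$2^{k-1}$ parts: one must verify that when such a run is deleted the resulting adjacency is genuinely one the algorithm tested — that Step~$\hat 3$ looks \emph{past} colour-$2^{k-1}$ parts when evaluating the gap above the current part — and one must exclude the cascade in which a part just below a colour $c>2^{k-1}$ would end up too close to it once the run disappears; the point is that whenever this would occur the $\delta^*$-check fails, so that lower part is itself recoloured $2^{k-1}$ and extracted and never reaches $\hat\lambda^{(2)}$. Keeping straight the distinction between $\delta$ and $\delta^*$ for colours exceeding $2^{k-1}$ throughout this bookkeeping is the delicate point, and it is exactly the counterpart of the trickiest feature of the forward Step~3 read backwards. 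Once both steps are in place, \eqref{eq:diffcondstep6} holds for the arbitrary pair $\hat\lambda_\ell^{(2)},\hat\lambda_{\ell+1}^{(2)}$, hence for all $i=1,\dots,\hat L-\hat M-1$.
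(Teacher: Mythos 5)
Your plan matches the paper's proof of this lemma: both trace the gap condition through Step~$\hat{3}$, observing that any surviving adjacency must have passed the $\delta^*$-check (so \eqref{eq:diffcondstep6} holds there directly), and verifying by telescoping that \eqref{eq:diffcondstep5} is preserved across the new adjacencies created by a reverse redistribution or by the deletion of a run of colour-$2^{k-1}$ parts. The cascade you flag as the real obstacle is exactly what the paper handles in its explicit case analysis (cases (b.i), (b.ii), (c), (d)), with Lemma~\ref{lemma:distinctpowersof2} playing the role of the reverse-direction analogue of Lemma~\ref{lemma:redistribcolors} that you anticipate.
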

\begin{proof}
    We start with the smallest part $\hat{\lambda}_L^{(3)}.$
    \begin{enumerate}
        \item If $\hat{c}_{\hat{L}}^{(3)} =2^{k-1},$ we extract the part $\hat{\lambda}_{\hat{L}}^{(3)}$ and put it in $\hat{\mu}^{(2)}.$ We then proceed to the next part, $\hat{\lambda}_{\hat{L}-1}^{(3)},$ treating it as the new smallest part.
        \item If $\hat{c}_{\hat{L}-1}^{(3)} >2^{k-1}$ and $\hat{c}_{\hat{L}}^{(3)} \neq2^{k-1},$ we check whether
\begin{equation}\label{eq:diffcondproblemstep6}
\hat{\lambda}^{(3)}_{\hat{L}-1} -\hat{\lambda}^{(3)}_{\hat{L}}  \geq \omega(\hat{c}_{\hat{L}-1}^{(3)} )+\delta(\hat{c}_{\hat{L}-1}^{(3)} -2^{k-1},\hat{c}_{\hat{L}}^{(3)} )-1.
\end{equation}
\begin{enumerate}
            \item[(b.i)] If $\eqref{eq:diffcondproblemstep6}$ is false, we redistribute the colors between the two parts, following the method stated above. Since $\tilde{\tilde{c}}_{\hat{L}}^{(3)} =2^{k-1},$ we extract $\tilde{\tilde{\lambda}}_{\hat{L}}^{(3)} $ and put it in $\hat{\mu}^{(2)}.$ The condition in $\eqref{eq:diffcondstep5}$ is still true for $i=\hat{L}-2$:
            \begin{align*}
            \hat{\lambda}_{\hat{L}-2}^{(3)} -\tilde{\tilde{\lambda}}_{\hat{L}-1}^{(3)} 
            &= \hat{\lambda}_{\hat{L}-2}^{(3)} -\hat{\lambda}_{\hat{L}-1}^{(3)}  \\
            &\stackrel{\eqref{eq:diffcondstep5}}{\geq} \omega(\hat{c}_{\hat{L}-2}^{(3)} ) + \delta(\hat{c}_{\hat{L}-2}^{(3)} ,\hat{c}_{\hat{L}-1}^{(3)} ) - 1 \\
            &\geq \omega(\hat{c}_{\hat{L}-2}^{(3)} ) + \delta(\hat{c}_{\hat{L}-2}^{(3)} ,\tilde{\tilde{c}}_{\hat{L}-1}^{(3)} ) - 1,
            \end{align*}
            since $v(\tilde{\tilde{c}}_{\hat{L}-1}^{(3)} ) \leq v(\hat{c}_{\hat{L}-1}^{(3)} ).$
            We proceed to the next part $\tilde{\tilde{\lambda}}_{\hat{L}-1},$ treating it as the new smallest part.
            
            \item[(b.ii)] If $\eqref{eq:diffcondproblemstep6}$ is true, then so is $\eqref{eq:diffcondstep6}$ for $i=\hat{L}-1.$ We proceed to the next part $\hat{\lambda}_{\hat{L}-1}^{(3)} $ and analyze it the same way. However, if a redistribution of colors is needed between $\hat{\lambda}_{\hat{L}-2}^{(3)} $ and $\hat{\lambda}_{\hat{L}-1}^{(3)} ,$ i.e $\hat{c}_{\hat{L}-2}^{(3)}>2^{k-1}$ and $\hat{\lambda}_{\hat{L}-2}^{(3)}-\hat{\lambda}_{\hat{L}-1}^{(3)}<\omega(\hat{c}_{\hat{L}-2}^{(3)})+\delta(\hat{c}_{\hat{L}-2}^{(3)}-2^{k-1},\hat{c}_{\hat{L}-1}^{(3)})-1,$ it is additionally needed to verify that $\eqref{eq:diffcondstep6}$ works for $\tilde{\tilde{\lambda}}_{\hat{L}-2}^{(3)} $ and $\hat{\lambda}_{\hat{L}}^{(3)} ,$ since $\tilde{\tilde{\lambda}}_{\hat{L}-1}^{(3)}$ will be extracted:
            \begin{align*}
                \tilde{\tilde{\lambda}}_{\hat{L}-2}^{(3)} -\hat{\lambda}_{\hat{L}}^{(3)} 
                &=\hat{\lambda}_{\hat{L}-2}^{(3)} -\hat{\lambda}_{\hat{L}}^{(3)} \\
                &=(\hat{\lambda}_{\hat{L}-2}^{(3)} -\hat{\lambda}_{\hat{L}-1}^{(3)} )+(\hat{\lambda}_{\hat{L}-1}^{(3)} -\hat{\lambda}_{\hat{L}}^{(3)} ) \\
                &\geq (\omega(\hat{c}_{\hat{L}-2}^{(3)} ) + \delta(\hat{c}_{\hat{L}-2}^{(3)} ,\hat{c}_{\hat{L}-1}^{(3)} ) - 1) + (\omega(\hat{c}_{\hat{L}-1}^{(3)} ) + \delta(\hat{c}_{\hat{L}-1}^{(3)} -2^{k-1},\hat{c}_{\hat{L}}^{(3)} ) - 1) \\
                &= \omega(\hat{c}_{\hat{L}-2}^{(3)} )+\omega(\hat{c}_{\hat{L}-1}^{(3)} )-1 + \delta(\hat{c}_{\hat{L}-1}^{(3)} -2^{k-1},\hat{c}_{\hat{L}}^{(3)} ) - 1 \\
                &\geq \omega(\tilde{\tilde{c}}_{\hat{L}-2}^{(3)} )+\delta(\tilde{\tilde{c}}_{\hat{L}-2}^{(3)} -2^{k-1},\hat{c}_{\hat{L}}^{(3)} )-1,
            \end{align*}
            where the third line follows from $\eqref{eq:diffcondstep5}$ applied to $\hat{\lambda}_{\hat{L}-2}^{(3)} -\hat{\lambda}_{\hat{L}-1}^{(3)} $ and $\eqref{eq:diffcondstep6}$ applied to $\hat{\lambda}_{\hat{L}-1}^{(3)} -\hat{\lambda}_{\hat{L}}^{(3)} .$ The fourth line follows from $\hat{c}_{\hat{L}-2}^{(3)} >2^{k-1}.$ Finally, the last line follows from $\omega(\hat{c}_{\hat{L}-2}^{(3)} )+\omega(\hat{c}_{\hat{L}-1}^{(3)} )-1 = \omega(\tilde{\tilde{c}}_{\hat{L}-2}^{(3)} )$ and $z(\hat{c}_{\hat{L}-1}^{(3)} -2^{k-1})\leq z(\tilde{\tilde{c}}_{\hat{L}-2}^{(3)} -2^{k-1}).$
        \end{enumerate}
        
        \item If $\hat{c}_{\hat{L}-1}^{(3)} <2^{k-1}$ and $\hat{c}_{\hat{L}}^{(3)} \neq2^{k-1},$ then $\eqref{eq:diffcondstep6}$ is true for $i=\hat{L}-1,$ by $\eqref{eq:diffcondstep5}.$ We proceed to the next part $\hat{\lambda}_{\hat{L}-1}^{(3)} .$ If a redistribution of colors is needed between $\hat{\lambda}_{\hat{L}-2}^{(3)} $ and $\hat{\lambda}_{\hat{L}-1}^{(3)} ,$ it is additionally needed to verify that $\eqref{eq:diffcondstep6}$ works for $\tilde{\tilde{\lambda}}_{\hat{L}-2}^{(3)} $ and $\hat{\lambda}_{\hat{L}}^{(3)} .$ The argument is the same as above in the case (b.ii).
        
        \item If $\hat{c}_{\hat{L}-1}^{(3)} =2^{k-1}$ and $\hat{c}_{\hat{L}}^{(3)} \neq2^{k-1},$ then $\hat{\lambda}_{\hat{L}-1}^{(3)} $ is removed and put in $\hat{\mu}^{(2)}.$ Suppose $\hat{c}_{\hat{L}-2}^{(3)}  \neq 2^{k-1}$ (otherwise, we iterate the argument for the smallest part above $\hat{\lambda}_{\hat{L}}^{(3)} $ that is not in the color $2^{k-1}$). Then $\eqref{eq:diffcondstep5}$ still stands for $\hat{\lambda}_{\hat{L}-2}^{(3)} $ and $\hat{\lambda}_{\hat{L}}^{(3)} $:
        \begin{align*}
                \hat{\lambda}_{\hat{L}-2}^{(3)} -\hat{\lambda}_{\hat{L}}^{(3)} 
                &= (\hat{\lambda}_{\hat{L}-2}^{(3)} -\hat{\lambda}_{\hat{L}-1}^{(3)} )+(\hat{\lambda}_{\hat{L}-1}^{(3)} -\hat{\lambda}_{\hat{L}}^{(3)}) \\
                &\stackrel{\eqref{eq:diffcondstep5}}{\geq} (\omega(\hat{c}_{\hat{L}-2}^{(3)} ) + \delta(\hat{c}_{\hat{L}-2}^{(3)} ,2^{k-1}) - 1) + (\omega(2^{k-1}) + \delta(2^{k-1},\hat{c}_{\hat{L}}^{(3)} ) - 1) \\
                &= \omega(\hat{c}_{\hat{L}-2}^{(3)} )+\delta(\hat{c}_{\hat{L}-2}^{(3)} ,2^{k-1}) - 1 \\
                &\geq \omega(\hat{c}_{\hat{L}-2}^{(3)} )+\delta(\hat{c}_{\hat{L}-2}^{(3)} ,\hat{c}_{\hat{L}}^{(3)} ) - 1.
            \end{align*}
            We analyze all cases once again, this time with $\hat{\lambda}_{\hat{L}-2}^{(3)}$ instead of $\hat{\lambda}_{\hat{L}-1}^{(3)}.$
    \end{enumerate}
    We continue this procedure step by step, each time analyzing all possible cases, from the smallest part to the largest part of $\hat{\lambda}^{(3)}.$ At the end of Step $\hat{3}$, the parts satisfy the required gap condition. Indeed, if $\hat{c}_i^{(2)}<2^{k-1}$ then the pair $(\hat{\lambda}_i^{(2)},\hat{\lambda}_{i+1}^{(2)})$ satisfies \eqref{eq:diffcondstep5} which is equal to \eqref{eq:diffcondstep6} in this case, and if $\hat{c}_i^{(2)}>2^{k-1}$ then the pair $(\hat{\lambda}_i^{(2)},\hat{\lambda}_{i+1}^{(2)})$ necessarily satisfies $\hat{\lambda}^{(2)}_{i} -\hat{\lambda}^{(2)}_{i+1}  \geq \omega(\hat{c}_{i}^{(2)} )+\delta(\hat{c}_{i}^{(2)} -2^{k-1},\hat{c}_{i+1}^{(2)} )-1$ because of the redistribution of colors.
\end{proof}
\begin{lemma}\label{lemma:smallestpart6}
The smallest part of $\hat{\lambda}^{(2)}$ satisfies
\begin{equation}\label{eq:smallestpart6}
\hat{\lambda}^{(2)}_{\hat{L}-\hat{M}} \geq \omega(\hat{c}^{(2)}_{\hat{L}-\hat{M}}).
\end{equation}
\end{lemma}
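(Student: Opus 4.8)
The plan is to prove, as an invariant maintained throughout Step~$\hat{3}$, the slightly stronger statement that \emph{every} part of the current partition is at least the binary weight $\omega$ of its current color; the lemma is then the special case of this invariant applied to the smallest part of $\hat{\lambda}^{(2)}$. First I would record that this strong form already holds for $\hat{\lambda}^{(3)}$ itself: for the smallest part $\hat{\lambda}_{\hat{L}}^{(3)}$ this is exactly Lemma~\ref{lemma:smallestpart5}, and for any other part $\hat{\lambda}_i^{(3)}$ with $i\le\hat{L}-1$ the difference condition \eqref{eq:diffcondstep5} gives $\hat{\lambda}_i^{(3)}-\hat{\lambda}_{i+1}^{(3)}\ge\omega(\hat{c}_i^{(3)})+\delta(\hat{c}_i^{(3)},\hat{c}_{i+1}^{(3)})-1\ge\omega(\hat{c}_i^{(3)})-1$, so, since $\hat{\lambda}_{i+1}^{(3)}\ge 1$ (all parts of $\hat{\lambda}^{(3)}$ are positive integers), adding these yields $\hat{\lambda}_i^{(3)}\ge\omega(\hat{c}_i^{(3)})$.

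Next I would check that the invariant is preserved by each of the two operations that Step~$\hat{3}$ performs on the current partition: removing a part whose current color is $2^{k-1}$, and a redistribution of colors between two adjacent parts. Removal leaves the remaining parts untouched, and the removed part, being a positive integer, is $\ge 1=\omega(2^{k-1})$, so nothing is lost. For a redistribution applied to a pair $(\hat{\lambda}_{i-1}^{(3)},\hat{\lambda}_i^{(3)})$ with current colors $c_a>2^{k-1}$ and $c_b\ne 2^{k-1}$, producing the new colors $\tilde{c}_a=c_a+c_b-2^{k-1}$ on $\hat{\lambda}_{i-1}^{(3)}$ and $2^{k-1}$ on $\hat{\lambda}_i^{(3)}$: the latter keeps its value and its new color causes no problem as above; for the former, Lemma~\ref{lemma:distinctpowersof2} guarantees that the binary supports of $c_a-2^{k-1}$ and $c_b$ are disjoint, hence $\omega(\tilde{c}_a)=\omega(c_a)+\omega(c_b)-1$, while the difference condition in force at that moment (maintained as in the proof of Lemma~\ref{lemma:diffcondstep6}, together with $\delta(c_a,c_b)=0$ since $c_a>2^{k-1}$) gives $\hat{\lambda}_{i-1}^{(3)}-\hat{\lambda}_i^{(3)}\ge\omega(c_a)-1$; combining this with the inductive bound $\hat{\lambda}_i^{(3)}\ge\omega(c_b)$ gives $\hat{\lambda}_{i-1}^{(3)}\ge\omega(c_a)+\omega(c_b)-1=\omega(\tilde{c}_a)$, as needed.

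Since $\hat{\lambda}^{(2)}$ is obtained from $\hat{\lambda}^{(3)}$ by finitely many such operations, an induction on the number of operations shows that every part of $\hat{\lambda}^{(2)}$ satisfies the invariant; specializing to its smallest part $\hat{\lambda}^{(2)}_{\hat{L}-\hat{M}}$ gives \eqref{eq:smallestpart6}. I expect the only delicate point to be the bookkeeping in the redistribution case -- namely ensuring that the bound $\hat{\lambda}_{i-1}^{(3)}-\hat{\lambda}_i^{(3)}\ge\omega(c_a)-1$ uses the difference condition actually in force when the redistribution is applied. The part $\hat{\lambda}_i^{(3)}$ may itself have been recolored earlier, as the upper part of a previous redistribution, but in that case its current color already satisfies the invariant by the inductive hypothesis, so the estimate goes through verbatim. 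An alternative presentation, closer in spirit to the proof of Lemma~\ref{lemma:smallestpart3}, would split directly on whether $\hat{\lambda}_{\hat{L}}^{(3)}$ survives Step~$\hat{3}$ and, if not, track the smallest surviving part through the at most two recolorings it can undergo; this leads to the same estimates.
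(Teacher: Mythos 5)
Your proposal is correct, and it takes a genuinely cleaner route than the paper's proof. The paper argues directly about the smallest part of the evolving partition, splitting into three cases (the smallest part of $\hat{\lambda}^{(3)}$ survives untouched, it is absorbed into a redistribution, or it sits at the bottom of a block of parts all coloured $2^{k-1}$), with a contradiction argument in the last case and a remark acknowledging that the redistribution case may need to be iterated. You instead promote the desired inequality to a uniform invariant ``every part is at least $\omega$ of its current colour,'' verify it for $\hat{\lambda}^{(3)}$ (via \eqref{eq:smallestpart5} for the bottom part and via \eqref{eq:diffcondstep5} together with positivity of parts for the others), and then check it is preserved by the two elementary moves in Step~$\hat{3}$. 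The redistribution case is where the real content lies, and your estimate $\hat{\lambda}_{i-1}^{(3)} \geq (\omega(c_a)-1) + \omega(c_b) = \omega(\tilde c_a)$ is exactly the chain of inequalities the paper carries out for its second case, so the arithmetic is identical; the difference is purely organisational. The one genuine subtlety you flag yourself -- that the bound $\hat{\lambda}_{i-1}^{(3)}-\hat{\lambda}_i^{(3)}\geq\omega(c_a)-1$ must hold for the pair that is adjacent at the moment the redistribution occurs, not for the original adjacency in $\hat{\lambda}^{(3)}$ -- is handled by the fact, established in the proof of Lemma~\ref{lemma:diffcondstep6}, that \eqref{eq:diffcondstep5} is maintained across extractions and recolourings; you correctly cite this. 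Your invariant formulation has the advantage of eliminating both the ad hoc contradiction argument (the paper's third case) and the ``repeat the argument'' remark: both fall out automatically because the bound is proved for \emph{every} part up front. The paper's version is shorter to state because it only tracks one part; yours is easier to believe because there is no case analysis to exhaust.
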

\begin{proof}
Various cases have to be considered.
    \begin{enumerate}
        \item[--] First case, $\hat{\lambda}_{\hat{L}}^{(3)}$ is not extracted nor modified by a redistribution of colors. Then $\hat{\lambda}_{\hat{L}-\hat{M}}^{(2)}=\hat{\lambda}_{\hat{L}}^{(3)}\overset{\eqref{eq:smallestpart5}}{\geq}\omega(\hat{c}_{\hat{L}}^{(3)})=\omega(\hat{c}_{\hat{L}-\hat{M}}^{(2)}).$
        \item[--] Second case, $\hat{\lambda}_{\hat{L}}^{(3)}$ undergoes a redistribution of colors. This could happen between $\hat{\lambda}_i^{(3)}$ and $\hat{\lambda}_{\hat{L}}^{(3)}$ for any $i\leq \hat{L}-1,$ if the parts in between have been extracted. However, \eqref{eq:diffcondstep5} is still true for these two parts, as seen in the proof of Lemma~\ref{lemma:diffcondstep6}. So, without loss of generality, suppose it happens between $\hat{\lambda}_{\hat{L}-1}^{(3)}$ and $\hat{\lambda}_{\hat{L}}^{(3)}.$ Then $\tilde{\tilde{c}}_{\hat{L}-1}^{(3)}=\hat{c}_{\hat{L}-1}^{(3)}+\hat{c}_{\hat{L}}^{(3)}-2^{k-1}$ and $\tilde{\tilde{c}}_{\hat{L}}^{(3)}=2^{k-1}.$ This means that $\tilde{\tilde{\lambda}}_{\hat{L}}^{(3)}$ is extracted. The new smallest part is $\tilde{\tilde{\lambda}}_{\hat{L}-1}^{(3)}$ and it satisfies
        \begin{align*}
            \tilde{\tilde{\lambda}}_{\hat{L}-1}^{(3)}=\hat{\lambda}_{\hat{L}-1}^{(3)}
            &\geq \hat{\lambda}_{\hat{L}}^{(3)}+\omega(\hat{c}_{\hat{L}-1}^{(3)})+\delta(\hat{c}_{\hat{L}-1}^{(3)},\hat{c}_{\hat{L}}^{(3)})-1
            &\eqref{eq:diffcondstep5}, \\
            &\geq \omega(\hat{c}_{\hat{L}}^{(3)})+\omega\hat{c}_{\hat{L}-1}^{(3)})+\delta(\hat{c}_{\hat{L}-1}^{(3)},\hat{c}_{\hat{L}}^{(3)})-1
            &(\hat{\lambda}_{\hat{L}}^{(3)}\geq\omega(\hat{c}_{\hat{L}}^{(3)})), \\
            &\geq \omega(\hat{c}_{\hat{L}}^{(3)})+\omega(\hat{c}_{\hat{L}-1}^{(3)})-1
            &(\delta(\cdot,\cdot)\geq0), \\
            &= \omega(\tilde{\tilde{c}}_{\hat{L}-1}^{(3)})
            &(\tilde{\tilde{c}}_{\hat{L}-1}^{(3)}=\hat{c}_{\hat{L}-1}^{(3)}+\hat{c}_{\hat{L}}^{(3)}-2^{k-1}).
        \end{align*}
        In conclusion, $\hat{\lambda}_{\hat{L}-\hat{M}}^{(2)}=\tilde{\tilde{\lambda}}_{\hat{L}-1}^{(3)}\geq\omega(\tilde{\tilde{c}}_{\hat{L}-1}^{(3)})=\omega(\hat{c}_{\hat{L}-\hat{M}}^{(2)}).$
        \item[--] Third case, $\hat{c}_i^{(3)}=\dots=\hat{c}_{\hat{L}}^{(3)}=2^{k-1}$ for some $i\leq \hat{L}.$ Then all associated parts are going to be extracted and the new smallest part will be $\hat{\lambda}_{i-1}^{(3)},$ thus it has to verify $\hat{\lambda}_{i-1}^{(3)}\geq\omega(\hat{c}_{i-1}^{(3)}).$ Suppose by contradiction that $\underbrace{\hat{\lambda}_{i-1}^{(3)}<\omega(\hat{c}_{i-1}^{(3)})}_{(\star\star)}$. Then,
        \[
        \hat{\lambda}_i^{(3)} \stackrel{\eqref{eq:diffcondstep5}}{\leq} \hat{\lambda}_{i-1}^{(3)}-\omega(\hat{c}_{i-1}^{(3)})-\delta(\hat{c}_{i-1}^{(3)},\hat{c}_i^{(3)})+1 \stackrel{(\star\star)}{<} -\delta(\hat{c}_{i-1}^{(3)},\hat{c}_i^{(3)})+1 \leq 1.\ \lightning
        \]
        Therefore, $\hat{\lambda}_{i-1}^{(3)}\geq\omega(\hat{c}_{i-1}^{(3)}).$ In conclusion, $\hat{\lambda}_i,\dots,\hat{\lambda}_{\hat{L}}$ can be extracted and $\hat{\lambda}_{\hat{L}-\hat{M}}^{(2)}=\hat{\lambda}_{i-1}^{(3)}\geq\omega(\hat{c}_{i-1}^{(3)})=\omega(\hat{c}_{\hat{L}-\hat{M}}^{(2)}).$
        \begin{remark}
            If $\hat{\lambda}_{i-1}^{(3)}$ also undergoes a redistribution of colors, repeat the argument of the second case before concluding.
        \end{remark}
    \end{enumerate}
\end{proof}
\begin{lemma}\label{lemma:MsmallerS}
The following inequality is true:
\[
\hat{M}\leq \hat{s}^{(4)},
\]
where \(\hat{s}^{(4)}\) is defined as the sum $\sum_{r=1}^{k-1} \hat{V}^{(4)}_{2^r}$, where \(\hat{V}^{(4)}_{2^r}\) denotes the number of parts \(\hat{\lambda}_i^{(4)}\) such that \(v(\hat{c}^{(4)}_i)=2^r\).
\end{lemma}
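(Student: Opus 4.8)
The plan is to convert the inequality into a counting statement about $\hat{\lambda}^{(3)}$ and then build an explicit injection. First I would note that Step~$\hat{4}$ only removes overlines and rescales parts, never altering colors, so $\hat{V}^{(4)}_{2^r}=\hat{V}^{(3)}_{2^r}$ for all $r$ and hence
\[
\hat{s}^{(4)}=\sum_{r=1}^{k-1}\hat{V}^{(4)}_{2^r}=\sum_{r=1}^{k-1}\hat{V}^{(3)}_{2^r}=\#\bigl\{\,i:v(\hat{c}^{(3)}_i)\geq 2\,\bigr\},
\]
the number of \emph{even-colored} parts of $\hat{\lambda}^{(3)}$ (those whose color omits $2^0$). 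It then suffices to inject the $\hat{M}$ parts that Step~$\hat{3}$ removes into $\hat{\mu}^{(2)}$ into this set of even-colored parts; the candidate map sends an extracted part to the part of $\hat{\lambda}^{(3)}$ it originates from (same value and position, only possibly a changed color).

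Next I would pin down the mechanics of Step~$\hat{3}$: it sweeps through $\hat{\lambda}^{(3)}$ from smallest to largest, visiting each part exactly once, and at a visit the part is either (a) extracted at once because its current color equals $2^{k-1}$, (b) extracted as the lower member of a redistribution with the part directly above it, or (c) left in place. A part can be the \emph{upper} member of a redistribution at most once (immediately before it is visited), and that is the only way its color can change, the change always producing a color $>2^{k-1}$. Consequently a type~(a) part had color exactly $2^{k-1}$ already in $\hat{\lambda}^{(3)}$ (hence $v=2^{k-1}\geq 2$ there), and a type~(b) part is, at the moment of its redistribution, the lower member of a pair to which the argument of Lemma~\ref{lemma:distinctpowersof2} applies, giving $z(c_{\mathrm{up}}-2^{k-1})<v(c_{\mathrm{low}})$; since the left side is at least $1$, its \emph{current} color has $v\geq 2$. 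Finally, being the upper member of a redistribution preserves $v$, because $c_{\mathrm{up}}-2^{k-1}$ and $c_{\mathrm{low}}$ have disjoint binary supports with every digit of the former lying below $v(c_{\mathrm{low}})$; so the current $v$ of any extracted part equals its $v$ in $\hat{\lambda}^{(3)}$, which is therefore $\geq 2$.

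Since each part of $\hat{\lambda}^{(3)}$ is visited once, the $\hat{M}$ extracted parts occupy $\hat{M}$ distinct positions, all even-colored, whence $\hat{M}\leq\#\{\,i:v(\hat{c}^{(3)}_i)\geq 2\,\}=\hat{s}^{(4)}$.

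The main obstacle I anticipate is the bookkeeping for \emph{chains} of redistributions, where one part first serves as the upper member of a redistribution (acquiring a new, larger color) and is then extracted as the lower member of the next redistribution one position higher. That such a part is still even-colored when measured in $\hat{\lambda}^{(3)}$ is precisely what the $v$-preservation observation and Lemma~\ref{lemma:distinctpowersof2} secure; the delicate point is keeping the order of events straight — each part visited once, each part an upper member at most once — so that the injection is genuinely well defined.
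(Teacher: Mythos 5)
Your proof is correct and reaches the bound by a genuinely cleaner route than the paper's. The paper also begins from $\hat{V}^{(3)}_{2^r}=\hat{V}^{(4)}_{2^r}$, but then splits $\hat{M}$ into the directly-extracted parts (counted by $\hat{V}^{(4)}_{2^{k-1}}$) plus the redistribution-induced extractions, and bounds the latter by $\sum_{r=1}^{k-2}\hat{V}^{(4)}_{2^r}$ via an informal case analysis on the $v$-values of the two parts in each redistribution pair, tracking how chains of redistributions ``use up'' low-$v$ parts. Your version packages the same underlying combinatorial fact as a single explicit injection: send each extracted part to its source position in $\hat{\lambda}^{(3)}$, and observe that the source always had $v\geq 2$. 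Injectivity is then automatic (each position is visited once in the sweep), and the chain bookkeeping that makes the paper's argument awkward is absorbed entirely into the $v$-preservation observation for upper members of redistributions (a clean consequence of the disjoint-binary-support fact in Lemma~\ref{lemma:distinctpowersof2}). This also unifies the type-(a) and type-(b) cases, whereas the paper treats them as two separate summands. The approach is essentially equivalent in content but more transparent, and it buys you a sharper picture of \emph{which} parts are extracted (exactly those even-colored parts that the sweep reaches via direct or cascaded redistribution), which the paper never states.

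One small imprecision to fix: you assert that being the upper member of a redistribution always produces a color $>2^{k-1}$. That is false in general; e.g.\ with $k=3$, an upper part of color $5=1+4$ paired with a lower part of color $2$ yields new upper color $5+2-4=3<4$. What is true, and what your type-(a) step actually needs, is that the new upper color has $\omega\geq 2$ (since $\hat{c}_{i-1}^{(3)}-2^{k-1}$ and $\hat{c}_i^{(3)}$ are both nonzero and have disjoint binary supports by Lemma~\ref{lemma:distinctpowersof2}), hence it is $\neq 2^{k-1}$. Replace the ``$>2^{k-1}$'' claim with ``$\omega\geq 2$, hence $\neq 2^{k-1}$'' and the argument is airtight.
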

\begin{proof}
First, note that $\hat{V}_{2^r}^{(3)}=\hat{V}_{2^r}^{(4)}$ for $r=1,\dots,k-1.$ Second, note that $\hat{M}$ is equal to the sum of $\hat{V}^{(4)}_{2^{k-1}}$ and the number of parts in $\hat{\mu}^{(2)}$ coming from the redistributions of colors. This last number is smaller or equal to $\sum_{r=1}^{k-2} \hat{V}^{(4)}_{2^r}.$ Indeed, each pair $(\hat{\lambda}^{(3)}_{i-1},\hat{\lambda}^{(3)}_i)$ undergoing a redistribution of colors can only be in one of these two forms (see Lemma~\ref{lemma:distinctpowersof2}):
\begin{enumerate}
\item[--] $v(\hat{c}^{(3)}_{i-1})=1$ and $v(\hat{c}^{(3)}_i)=2^r$ with $r>0$. In this case, the part $\hat{\lambda}^{(3)}_i$ is counted once by $\hat{V}_{2^r}^{(3)}.$ After the redistribution, $v(\tilde{c}^{(3)}_{i-1})=1$ and the second part is extracted, hence $\tilde{\lambda}^{(3)}_{i-1}$ will not be involved in another redistribution.
\item[--] $v(\hat{c}^{(3)}_{i-1})=2^t$ and $v(\hat{c}^{(3)}_i)=2^r$ with $r>t>0$. In this case, both parts are counted exactly once, by $\hat{V}_{2^t}^{(3)}$ and $\hat{V}_{2^r}^{(3)}$ respectively. After the redistribution, $v(\tilde{c}^{(3)}_{i-1})=2^t$ and the second part is extracted. The part $\tilde{\lambda}^{(3)}_{i-1}$ could be involved in a second redistribution of colors. In that case, the other part in this new pair would be such that $v(\hat{c}^{(3)}_{i-2})=2^q$ with $r\neq q$, so it would be counted by $\hat{V}_{2^q}^{(3)}$ in the sum. 
\end{enumerate}
Therefore, the number of such pairs is smaller or equal to $\sum_{r=1}^{k-2} \hat{V}^{(4)}_{2^r}.$
\end{proof}
\begin{remark}\label{rem:condition4}
Note that the proof of Lemma~\ref{lemma:MsmallerS} implies that $\sum_{r=1}^{k-2} \hat{V}^{(2)}_{2^r} = \hat{s}^{(4)} - \hat{M}.$
\end{remark}  

\addtocounter{enumi}{-2}\item As said in Step $\hat{4}$, the removal of a generalized staircase from an overpartition and the addition of a generalized staircase to a partition are inverse operations. Similarly, the removal of a staircase from a partition into distinct parts and the addition of a staircase to a partition are inverse operations. Therefore, to reverse Step 2, we add $\hat{\nu}$ partially to $\hat{\lambda}^{(2)}$ and partially to $\hat{\mu}^{(2)}.$

To do so, we take the $\hat{M}$ largest parts in the generalized staircase $\hat{\nu}.$ Since $\hat{M}\leq \hat{s}^{(4)}$ by Lemma~\ref{lemma:MsmallerS}, these parts form a staircase: $(\hat{L}-1,\hat{L}-2,\dots,\hat{L}-\hat{M})$ (see Lemma~\ref{lemma:hatS}). We add those parts to the partition $\hat{\mu}^{(2)}$ which gives $\hat{\mu}^{(1)}\coloneqq(\hat{L}-1+\hat{\mu}_1^{(2)},\,\dots,\hat{L}-\hat{M}+\hat{\mu}_{\hat{M}}^{(2)}),$ where all parts are distinct and in the color $2^{k-1}.$

Remember that $\hat{\lambda}^{(2)}$ has $\hat{L}-\hat{M}$ parts. The largest part from the remaining $\hat{L}-\hat{M}$ parts in $\hat{\nu}$ is at most $\hat{L}-\hat{M}-1$. Hence, those parts can be added as a generalized staircase to $\hat{\lambda}^{(2)},$ following the method used in Step 4. This gives $\hat{\lambda}^{(1)}.$

\begin{example}
Consider the partitions from the last example:
\[
\hat{\lambda}^{(2)}=(10_3,8_5,7_3,5_1,4_5,1_2,1_2,1_1),\ \hat{\mu}^{(2)}=(6_4,2_4,2_4,1_4)\ \text{and}\ \hat{\nu}=(11,10,9,8,7,6,3,1,0).
\]
The number of parts in $\hat{\lambda}$ was $\hat{L}=12$ and the number of parts in $\hat{\mu}^{(2)}$ is $\hat{M}=4.$ Hence,
\begin{align*}
(\hat{\mu}^{(2)},\hat{\nu}) &= ((6_4,2_4,2_4,1_4),({11},{10},{9},{8},7,6,3,1,0)) \\
&\mapsto (({17}_4,{12}_4,{11}_4,{9}_4),(7,6,3,1,0)) \\
&= (\hat{\mu}^{(1)},\hat{\nu}^{(1)}).
\end{align*}
Add $\hat{\nu}^{(1)}$ to $\hat{\lambda}^{(2)}$ as a generalized staircase:
\begin{align*}
(\hat{\lambda}^{(2)},\hat{\nu}^{(1)}) &= ((10_3,8_5,7_3,5_1,4_5,1_2,1_2,1_1),({7},6,3,1,0)) \\
&\mapsto (({11}_3,{9}_5,{8}_3,{6}_1,{5}_5,{2}_2,{2}_2,{\overline{\textcolor{black}{1}}}_1),({6},3,1,0)) \\
&\mapsto (({12}_3,{10}_5,{9}_3,{7}_1,{6}_5,{3}_2,{\overline{\textcolor{black}{2}}}_2,\overline{1}_1),(3,1,0)) \\
&\dots \\
&\mapsto (\overline{14}_3,\overline{11}_5,10_3,\overline{7}_1,6_5,3_2,\overline{2}_2,\overline{1}_1) \\
&= \hat{\lambda}^{(1)}.
\end{align*}
\end{example}
\begin{lemma}\label{lemma:diffcondstep7}
The difference condition satisfied by $\hat{\lambda}^{(1)}$ is
    \begin{equation}\label{eq:diffcondstep7}
    \hat{\lambda}_i^{(1)}-\hat{\lambda}_{i+1}^{(1)} \geq \omega(\hat{c}_i^{(1)})+\delta^*(\hat{c}_i^{(1)},\hat{c}_{i+1}^{(1)})-\mathbf{1}_{\hat{\lambda}^{(1)}_{i+1} \text{ is not overlined}},
    \end{equation}
for $i=1,\dots,\hat{L}-\hat{M}-1.$
\end{lemma}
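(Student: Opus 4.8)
The plan is to treat Lemma~\ref{lemma:diffcondstep7} as the inverse counterpart of Lemma~\ref{lemma:diffcondstep4}: there the partial staircase was added in Step~4 and one read off a difference condition for $\lambda^{(4)}$ from the one for $\lambda^{(3)}$; here the generalized staircase $\hat{\nu}^{(1)}$ is added to $\hat{\lambda}^{(2)}$ in Step~$\hat{2}$, and one should read off \eqref{eq:diffcondstep7} from the difference condition \eqref{eq:diffcondstep6} for $\hat{\lambda}^{(2)}$. Two structural observations do most of the work. First, adding a generalized staircase only enlarges the sizes of parts and overlines some of them; it never alters the colors, so $\hat{c}_i^{(1)}=\hat{c}_i^{(2)}$ for every $i$, and it does not change the number of parts, so $\hat{\lambda}^{(1)}$ again has $\hat{L}-\hat{M}$ parts (which is why the index range in the statement is $i=1,\dots,\hat{L}-\hat{M}-1$; recall also that the largest part of $\hat{\nu}^{(1)}$ is at most $\hat{L}-\hat{M}-1$, as was noted when $\hat{\nu}^{(1)}$ was produced in Step~$\hat{2}$, so the addition is well defined). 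Second, the parts of $\hat{\nu}^{(1)}$ are pairwise distinct (being a sub-multiset of the distinct parts of the generalized staircase $\hat{\nu}$), so the addition overlines the positions $p+1$, with $p$ ranging over the parts of $\hat{\nu}^{(1)}$, and these positions are distinct.

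Concretely, I would fix $i\in\{1,\dots,\hat{L}-\hat{M}-1\}$ and compare consecutive differences in $\hat{\lambda}^{(1)}$ and $\hat{\lambda}^{(2)}$. Since $\hat{\lambda}^{(2)}$ is recovered from $\hat{\lambda}^{(1)}$ by removing the generalized staircase — that is, for each overlined part of $\hat{\lambda}^{(1)}$ one deletes the overline and subtracts $1$ from every part above it — the part at position $r$ of $\hat{\lambda}^{(2)}$ equals $\hat{\lambda}^{(1)}_r$ minus the number of overlined parts of $\hat{\lambda}^{(1)}$ at positions $>r$. Applying this at $r=i$ and $r=i+1$ and subtracting, the overlined parts at positions $>i+1$ contribute equally and cancel, leaving
\[
\hat{\lambda}_i^{(1)}-\hat{\lambda}_{i+1}^{(1)}=\bigl(\hat{\lambda}_i^{(2)}-\hat{\lambda}_{i+1}^{(2)}\bigr)+\mathbf{1}_{\hat{\lambda}^{(1)}_{i+1}\text{ is overlined}}.
\]

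From here the conclusion is immediate: substituting $\hat{c}_i^{(2)}=\hat{c}_i^{(1)}$ and $\hat{c}_{i+1}^{(2)}=\hat{c}_{i+1}^{(1)}$ into Lemma~\ref{lemma:diffcondstep6} and combining with the displayed identity gives
\[
\hat{\lambda}_i^{(1)}-\hat{\lambda}_{i+1}^{(1)}\geq\omega(\hat{c}_i^{(1)})+\delta^*(\hat{c}_i^{(1)},\hat{c}_{i+1}^{(1)})-1+\mathbf{1}_{\hat{\lambda}^{(1)}_{i+1}\text{ is overlined}},
\]
and since $-1+\mathbf{1}_{\hat{\lambda}^{(1)}_{i+1}\text{ is overlined}}=-\mathbf{1}_{\hat{\lambda}^{(1)}_{i+1}\text{ is not overlined}}$, this is exactly \eqref{eq:diffcondstep7}; in the write-up it is cleanest to split into the two cases according to whether $\hat{\lambda}_{i+1}^{(1)}$ is overlined, mirroring the proof of Lemma~\ref{lemma:diffcondstep4} with $\delta$ replaced by $\delta^*$ and \eqref{eq:diffcondstep3} replaced by \eqref{eq:diffcondstep6}. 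I do not expect a genuine obstacle here; the only point requiring care is the bookkeeping behind the displayed identity — that an overline strictly below position $i+1$ lowers $\hat{\lambda}_i^{(1)}$ and $\hat{\lambda}_{i+1}^{(1)}$ by the same amount and hence drops out of the difference, while an overline exactly at position $i+1$ affects $\hat{\lambda}_i^{(1)}$ alone.
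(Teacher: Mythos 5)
Your proof is correct and takes essentially the same approach as the paper: you establish $\hat{\lambda}_i^{(1)}-\hat{\lambda}_{i+1}^{(1)} = (\hat{\lambda}_i^{(2)}-\hat{\lambda}_{i+1}^{(2)}) + \mathbf{1}_{\hat{\lambda}^{(1)}_{i+1}\text{ overlined}}$ and then invoke Lemma~\ref{lemma:diffcondstep6}, which is exactly what the paper does in its two-case split. Your unified display and the explicit bookkeeping that consecutive differences are affected only by an overline at position $i+1$ are just a cleaner packaging of the same argument.
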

\begin{proof}
    Indeed, if $\hat{\lambda}_{i+1}^{(1)}$ is overlined, then
    \begin{align*}
    \hat{\lambda}_i^{(1)}-\hat{\lambda}_{i+1}^{(1)}
    &=\hat{\lambda}_i^{(2)}+1-\hat{\lambda}_{i+1}^{(2)} \\
    &\stackrel{\eqref{eq:diffcondstep6}}{\geq}\omega(\hat{c}_i^{(2)})+\delta^*(\hat{c}_i^{(2)},\hat{c}_{i+1}^{(2)}) \\
    &=\omega(\hat{c}_i^{(1)})+\delta^*(\hat{c}_i^{(1)},\hat{c}_{i+1}^{(1)})-\mathbf{1}_{\hat{\lambda}_{i+1}^{(1)} \text{ is not overlined}},
    \end{align*}
    and if $\hat{\lambda}_{i+1}^{(1)}$ is not overlined, then
    \begin{align*}
    \hat{\lambda}_i^{(1)}-\hat{\lambda}_{i+1}^{(1)}
    &=\hat{\lambda}_i^{(2)}-\hat{\lambda}_{i+1}^{(2)} \\
    &\stackrel{\eqref{eq:diffcondstep6}}{\geq}\omega(\hat{c}_i^{(2)})+\delta^*(\hat{c}_i^{(2)},\hat{c}_{i+1}^{(2)})-1 \\
    &=\omega(\hat{c}_i^{(1)})+\delta^*(\hat{c}_i^{(1)},\hat{c}_{i+1}^{(1)})-\mathbf{1}_{\hat{\lambda}_{i+1}^{(1)} \text{ is not overlined}}.
    \end{align*}
\end{proof}
\begin{lemma}\label{lemma:smallestpart7}
The smallest part of $\hat{\lambda}^{(1)}$ satisfies
\begin{equation}\label{eq:smallestpart7}
\hat{\lambda}_{\hat{L}-\hat{M}}^{(1)}\geq\omega(\hat{c}_{\hat{L}-\hat{M}}^{(1)}).
\end{equation}
\end{lemma}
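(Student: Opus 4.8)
The plan is to argue exactly as in the proofs of Lemmas~\ref{lemma:smallestpart2}, \ref{lemma:smallestpart4} and \ref{lemma:smallestpart6}: Step~$\hat{2}$ recovers $\hat{\lambda}^{(1)}$ from $\hat{\lambda}^{(2)}$ by adding the leftover portion $\hat{\nu}^{(1)}$ of the partial staircase $\hat{\nu}$ as a generalized staircase, and this is the same operation --- hence subject to the same analysis of the smallest part --- as the generalized-staircase addition in Step~4. First I would record the structural fact that adding a generalized staircase never increases the number of parts: the operation only increments some of the existing parts and overlines others, so $\hat{\lambda}^{(1)}$ has the same number of parts, namely $\hat{L}-\hat{M}$, as $\hat{\lambda}^{(2)}$ (recall that $\hat{\lambda}^{(2)}$ has $\hat{L}-\hat{M}$ parts once $\hat{\mu}^{(2)}$ has been extracted in Step~$\hat{3}$). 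In particular the smallest part of $\hat{\lambda}^{(1)}$ is indexed $\hat{L}-\hat{M}$, matching the index of the smallest part of $\hat{\lambda}^{(2)}$.

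Next I would examine how that smallest part is affected. Since the largest part of $\hat{\nu}^{(1)}$ is at most $\hat{L}-\hat{M}-1$, for every part $p$ of $\hat{\nu}^{(1)}$ the smallest part $\hat{\lambda}^{(2)}_{\hat{L}-\hat{M}}$ is never among the first $p$ parts, so it is never incremented; the only thing that can happen to it is that it receives an overline (when $\hat{\nu}^{(1)}$ contains a part equal to $\hat{L}-\hat{M}-1$). Its size and color are therefore unchanged: $\hat{\lambda}^{(1)}_{\hat{L}-\hat{M}}=\hat{\lambda}^{(2)}_{\hat{L}-\hat{M}}$ and $\hat{c}^{(1)}_{\hat{L}-\hat{M}}=\hat{c}^{(2)}_{\hat{L}-\hat{M}}$. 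Combining this with \eqref{eq:smallestpart6} gives
\[
\hat{\lambda}^{(1)}_{\hat{L}-\hat{M}}=\hat{\lambda}^{(2)}_{\hat{L}-\hat{M}}\geq\omega(\hat{c}^{(2)}_{\hat{L}-\hat{M}})=\omega(\hat{c}^{(1)}_{\hat{L}-\hat{M}}),
\]
which is the claim.

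I do not expect a genuine obstacle here; the lemma is of the same routine nature as the earlier ``smallest part'' lemmas. The only point that needs a little care is the index bookkeeping --- making sure one uses that $\hat{\lambda}^{(2)}$ has exactly $\hat{L}-\hat{M}$ parts, so that ``the smallest part'' refers to the same position before and after adding $\hat{\nu}^{(1)}$ --- together with the observation that the largest part of $\hat{\nu}^{(1)}$ is at most $\hat{L}-\hat{M}-1$, which is precisely what makes the generalized-staircase addition legal and prevents the smallest part from being enlarged.
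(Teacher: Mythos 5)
Your proof is correct and takes essentially the same route as the paper's: both observe that adding the generalized staircase $\hat{\nu}^{(1)}$ leaves the size and color of the smallest part unchanged (you spell out why, via the bound on the largest part of $\hat{\nu}^{(1)}$) and then invoke Lemma~\ref{lemma:smallestpart6}. Incidentally, you correctly cite \eqref{eq:smallestpart6} where the paper's displayed inequality mistakenly refers to \eqref{eq:smallestpart7} itself.
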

\begin{proof}
When we add a generalized staircase to a partition, no changes in size or color are made to the smallest part, so that
\[
\hat{\lambda}_{\hat{L}-\hat{M}}^{(1)}=\hat{\lambda}^{(2)}_{\hat{L}-\hat{M}} \overset{\eqref{eq:smallestpart7}}{\geq} \omega(\hat{c}^{(2)}_{\hat{L}-\hat{M}})=\omega(\hat{c}_{\hat{L}-\hat{M}}^{(1)}).
\]
\end{proof}

\addtocounter{enumi}{-2}\item The process in Step 1 is actually very similar to the process of adding a generalized staircase to a partition. Indeed, we consider $\mu^{(1)}$, a partition into distinct parts in color $2^{k-1}$, instead of a generalized staircase. For each part $p$ in $\mu^{(1)}$ which is smaller than $L$, we add $1$ to the first $p$ part of $\lambda^{(1)}$, but instead of overlining the $(p+1)$-th part, we add $2^{k-1}$ to the color of the $p$-th part. Step 1 is therefore reversible, as is the operation of adding a staircase, since we can easily identify where the changes occur by looking at the parts whose color contains the primary color $2^{k-1}$. 

In fact, the inverse operation for Step 1 is very similar to the action of removing a generalized staircase: we identify the parts in $\hat{\lambda}^{(1)}$ whose color's binary decomposition contains $2^{k-1}$ and for each of those parts starting from the smallest one, we take away $2^{k-1}$ from its color and remove $1$ from its value and from each part above it. Those $1$'s, along with the color $2^{k-1},$ are moved to the end of $\hat{\mu}^{(1)},$ as a new part of the color $2^{k-1}.$ This is always possible since $\hat{\lambda}^{(1)}$ has $\hat{L}-\hat{M}$ parts and the smallest part of $\hat{\mu}^{(1)}$ is greater than $\hat{L}-\hat{M}.$ The two partitions obtained at the end of this step are called $\hat{\lambda}$ and $\hat{\mu}.$

\begin{example}
Consider the partitions $\hat{\lambda}^{(1)}$ and $\hat{\mu}^{(1)}$ obtained at the end of the previous example, then find the parts in $\hat{\lambda}^{(1)}$ whose color's binary decomposition contains $4$ and apply the procedure of Step $\hat{1}$:
\begin{align*}
(\hat{\lambda}^{(1)},\hat{\mu}^{(1)}) &= ((\overline{14}_3,\overline{11}_5,10_3,\overline{7}_1,6_{{5}},3_2,\overline{2}_2,\overline{1}_1),(17_4,12_4,11_4,9_4)) \\
&\mapsto ((\overline{{13}}_3,\overline{{10}}_5,{9}_3,\overline{{6}}_1,{5_1},3_2,\overline{2}_2,\overline{1}_1),(17_4,12_4,11_4,9_4,{5_4})) \\
&\mapsto ((\overline{{12}}_3,\overline{{9}}_{{1}},9_3,\overline{6}_1,5_1,3_2,\overline{2}_2,\overline{1}_1),(17_4,12_4,11_4,9_4,5_4,{2_4})) \\
&= (\hat{\lambda},\hat{\mu}).
\end{align*}
The overpartition $\hat{\lambda}$ and the partition $\hat{\mu}$ are exactly the ones we started with at the beginning of the proof, i.e $\hat{\lambda}=\lambda$ and $\hat{\mu}=\mu.$
\end{example}

\begin{lemma}\label{lemma:diffcondstep8}
    The difference condition satisfied by $\hat{\lambda}$ is
    \begin{equation}\label{eq:diffcondstep8}
    \hat{\lambda}_i-\hat{\lambda}_{i+1} \geq \omega(\hat{c}_i)+\delta(\hat{c}_i,\hat{c}_{i+1})-\mathbf{1}_{\hat{\lambda}_{i+1} \text{ is not overlined}},
    \end{equation}
    for $i=1,\dots,\hat{L}-\hat{M}-1.$
\end{lemma}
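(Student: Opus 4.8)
The plan is to run the argument of Lemma~\ref{lemma:diffcondstep1} in reverse: Step~$\hat{1}$ is the exact inverse of Step~1, so I would track how it transforms the difference condition \eqref{eq:diffcondstep7} for $\hat{\lambda}^{(1)}$ (Lemma~\ref{lemma:diffcondstep7}) into the claimed condition for $\hat{\lambda}$, organizing the verification as a $2\times 2$ table indexed by the sign of $\hat{c}^{(1)}_i-2^{k-1}$ and the overline status of $\hat{\lambda}^{(1)}_{i+1}$, just as in Table~\ref{table1}. First I would record the bookkeeping on part sizes. In Step~$\hat{1}$ a $1$ is subtracted from $\hat{\lambda}^{(1)}_i$ once for every part at position $\ge i$ whose colour's binary decomposition contains $2^{k-1}$; if $N_i$ denotes that count then $\hat{\lambda}_i=\hat{\lambda}^{(1)}_i-N_i$, and $N_i-N_{i+1}$ equals $1$ when $\hat{c}^{(1)}_i>2^{k-1}$ and $0$ when $\hat{c}^{(1)}_i<2^{k-1}$. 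Hence $\hat{\lambda}_i-\hat{\lambda}_{i+1}=\hat{\lambda}^{(1)}_i-\hat{\lambda}^{(1)}_{i+1}$ in the first case and $\hat{\lambda}^{(1)}_i-\hat{\lambda}^{(1)}_{i+1}-1$ in the second.

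Next I would record the effect on colours. Step~$\hat{1}$ deletes $2^{k-1}$ from $\hat{c}^{(1)}_i$ exactly when $\hat{c}^{(1)}_i>2^{k-1}$, so $\omega(\hat{c}_i)=\omega(\hat{c}^{(1)}_i)$ if $\hat{c}^{(1)}_i<2^{k-1}$ and $\omega(\hat{c}_i)=\omega(\hat{c}^{(1)}_i)-1$ if $\hat{c}^{(1)}_i>2^{k-1}$, while overline status is untouched, so $\mathbf{1}_{\hat{\lambda}^{(1)}_{i+1}\text{ is not overlined}}=\mathbf{1}_{\hat{\lambda}_{i+1}\text{ is not overlined}}$. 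The one point that needs genuine care is the identity $\delta^*(\hat{c}^{(1)}_i,\hat{c}^{(1)}_{i+1})=\delta(\hat{c}_i,\hat{c}_{i+1})$. Here I would first note that no part of $\hat{\lambda}^{(1)}$ has colour exactly $2^{k-1}$ — this is inherited from the construction of $\hat{\lambda}^{(2)}$ and $\hat{\lambda}^{(1)}$ in Steps~$\hat{3}$ and~$\hat{2}$, since a redistribution colour $\hat{c}^{(3)}_{i-1}+\hat{c}^{(3)}_i-2^{k-1}$ has binary support in $\{0,\dots,k-2\}$ and the parts surviving Step~$\hat{3}$ keep colours different from $2^{k-1}$ — so $\delta^*(\hat{c}^{(1)}_i,\hat{c}^{(1)}_{i+1})$ is defined. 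Then $v(\hat{c}^{(1)}_{i+1})=v(\hat{c}_{i+1})$: the colour of position $i+1$ is either untouched, or (if $\hat{c}^{(1)}_{i+1}>2^{k-1}$) carries a smaller primary colour, so removing $2^{k-1}$ does not move its minimal primary colour; together with the definition of $\delta^*$ this yields the claimed identity in both cases $\hat{c}^{(1)}_i<2^{k-1}$ and $\hat{c}^{(1)}_i>2^{k-1}$.

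Finally I would substitute these identities into \eqref{eq:diffcondstep7}. When $\hat{c}^{(1)}_i<2^{k-1}$ every term is unchanged and \eqref{eq:diffcondstep8} transfers verbatim; when $\hat{c}^{(1)}_i>2^{k-1}$ the left-hand side decreases by $1$ while $\omega(\hat{c}^{(1)}_i)=\omega(\hat{c}_i)+1$, so the two extra $1$'s cancel and \eqref{eq:diffcondstep8} again follows. I expect the main obstacle to be precisely the two structural observations underlying $\delta^*(\hat{c}^{(1)}_i,\hat{c}^{(1)}_{i+1})=\delta(\hat{c}_i,\hat{c}_{i+1})$ — that $\hat{\lambda}^{(1)}$ has no part of colour exactly $2^{k-1}$, and that deleting $2^{k-1}$ from a colour larger than $2^{k-1}$ does not change its smallest primary colour; once these are in place, the rest is the same bookkeeping as in Lemma~\ref{lemma:diffcondstep1}, presented as a $2\times 2$ case table.
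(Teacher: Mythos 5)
Your proposal follows essentially the same route as the paper: the paper's proof of Lemma~\ref{lemma:diffcondstep8} consists precisely of a $2\times 2$ table (Table~\ref{table2}) indexed by whether $\hat{c}_i=\hat{c}_i^{(1)}$ or $\hat{c}_i=\hat{c}_i^{(1)}-2^{k-1}$ and by the overline status of $\hat{\lambda}_{i+1}^{(1)}$, read against Lemma~\ref{lemma:diffcondstep7}, which is exactly what you build. You go a bit further than the paper in justifying the $\delta^*\!\to\!\delta$ cell explicitly, by noting that $\hat{\lambda}^{(1)}$ contains no part of colour exactly $2^{k-1}$ and that deleting the largest primary colour $2^{k-1}$ does not disturb $v(\cdot)$; the paper leaves both facts implicit in the table and its caption. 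One small internal slip: the sentence ``Hence $\hat{\lambda}_i-\hat{\lambda}_{i+1}=\hat{\lambda}_i^{(1)}-\hat{\lambda}_{i+1}^{(1)}$ in the first case and $\hat{\lambda}_i^{(1)}-\hat{\lambda}_{i+1}^{(1)}-1$ in the second'' reverses the two cases relative to the ordering $\hat{c}_i^{(1)}>2^{k-1}$ then $\hat{c}_i^{(1)}<2^{k-1}$ you set up just before it; your final paragraph, however, applies the attributions correctly, so the conclusion stands.
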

\begin{proof}
The inequality follows from Lemma~\ref{lemma:diffcondstep7} and Table~\ref{table2}.
\begin{table}[H]
    \centering
    \begin{tabular}{|c|c|c|}
    \hline
      & \textbf{$\hat{c}_i=\hat{c}_i^{(1)}$} & \textbf{$\hat{c}_i=\hat{c}_i^{(1)}-2^{k-1}$} \\
    \hline
    $\hat{\lambda}_{i+1}^{(1)}$ is overlined & 
    \begin{minipage}{5cm}
    \vspace{6pt} 
    \begin{itemize}[left=0pt]
        \item $\hat{\lambda}_i-\hat{\lambda}_{i+1}=\hat{\lambda}_i^{(1)}-\hat{\lambda}_{i+1}^{(1)}$
        \item $\omega(\hat{c}_i)=\omega(\hat{c}_i^{(1)})$
        \item $\delta(\hat{c}_i,\hat{c}_{i+1}) = \delta^*(\hat{c}_i^{(1)},\hat{c}_{i+1}^{(1)})$
        \item $\mathbf{1}_{\hat{\lambda}_{i+1} \text{ is not overlined}} = 0$
    \end{itemize}
    \vspace{1pt} 
    \end{minipage}
    & \begin{minipage}{5cm}
    \vspace{6pt} 
    \begin{itemize}[left=0pt]
        \item $\hat{\lambda}_i-\hat{\lambda}_{i+1}= \hat{\lambda}_i^{(1)}-\hat{\lambda}_{i+1}^{(1)}-1$
        \item $\omega(\hat{c}_i)=\omega(\hat{c}_i^{(1)})-1$
        \item $\delta(\hat{c}_i,\hat{c}_{i+1}) = \delta^*(\hat{c}_i^{(1)},\hat{c}_{i+1}^{(1)})$
        \item $\mathbf{1}_{\hat{\lambda}_{i+1} \text{ is not overlined}} = 0$
    \end{itemize}
    \vspace{1pt} 
    \end{minipage} \\
    \hline
    $\lambda_{i+1}^{(1)}$ is not overlined & \begin{minipage}{5cm}
    \vspace{6pt} 
    \begin{itemize}[left=0pt]
        \item $\hat{\lambda}_i-\hat{\lambda}_{i+1}=\hat{\lambda}_i^{(1)}-\hat{\lambda}_{i+1}^{(1)}$
        \item $\omega(\hat{c}_i)=\omega(\hat{c}_i^{(1)})$
        \item $\delta(\hat{c}_i,\hat{c}_{i+1}) = \delta^*(\hat{c}_i^{(1)},\hat{c}_{i+1}^{(1)})$
        \item $\mathbf{1}_{\hat{\lambda}_{i+1} \text{ is not overlined}} = 1$
    \end{itemize}
    \vspace{1pt} 
    \end{minipage} & \begin{minipage}{5cm}
    \vspace{6pt} 
    \begin{itemize}[left=0pt]
        \item $\hat{\lambda}_i-\hat{\lambda}_{i+1}= \hat{\lambda}_i^{(1)}-\hat{\lambda}_{i+1}^{(1)}-1$
        \item $\omega(\hat{c}_i)=\omega(\hat{c}_i^{(1)})-1$
        \item $\delta(\hat{c}_i,\hat{c}_{i+1}) = \delta^*(\hat{c}_i^{(1)},\hat{c}_{i+1}^{(1)})$
        \item $\mathbf{1}_{\hat{\lambda}_{i+1} \text{ is not overlined}} = 1$
    \end{itemize}
    \vspace{1pt} 
    \end{minipage} \\
    \hline
    \end{tabular}
\caption{Note that the values in this table are the same whether $\hat{c}_{i+1}=\hat{c}_{i+1}^{(1)}-2^{k-1} \text{ or } \hat{c}_{i+1}=\hat{c}_{i+1}^{(1)}.$}\label{table2}
\end{table}
\end{proof}
\begin{lemma}\label{lemma:smallestpart8}
The smallest part of $\hat{\lambda}$ satisfies
\begin{equation}\label{eq:smallestpart8}
\hat{\lambda}_{\hat{L}-\hat{M}}\geq\omega(\hat{c}_{\hat{L}-\hat{M}}).
\end{equation}
\end{lemma}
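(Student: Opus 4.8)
The plan is to treat Step $\hat{1}$ as the inverse of Step~1 and to argue exactly as in the proof of Lemma~\ref{lemma:smallestpart1}, but backwards, so that everything reduces to Lemma~\ref{lemma:smallestpart7}. The key structural observation I would establish first is that, during Step $\hat{1}$, a part of $\hat{\lambda}^{(1)}$ can only be decreased (by $1$) when either it lies strictly above some part whose color's binary decomposition contains $2^{k-1}$, or it is itself such a part. Since $\hat{\lambda}^{(1)}_{\hat{L}-\hat{M}}$ is the smallest part of $\hat{\lambda}^{(1)}$, nothing sits below it, so it is never pulled down by the processing of another part; it is altered at all only in the second scenario, and then by exactly one unit in value, while the single primary color $2^{k-1}$ is stripped from its color.

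Concretely, I would split into two cases. If $2^{k-1}$ does not occur in the binary decomposition of $\hat{c}^{(1)}_{\hat{L}-\hat{M}}$, then the smallest part passes unchanged into $\hat{\lambda}$, so $\hat{\lambda}_{\hat{L}-\hat{M}} = \hat{\lambda}^{(1)}_{\hat{L}-\hat{M}}$ and $\hat{c}_{\hat{L}-\hat{M}} = \hat{c}^{(1)}_{\hat{L}-\hat{M}}$, and \eqref{eq:smallestpart8} is immediate from \eqref{eq:smallestpart7}. If $2^{k-1}$ does occur in $\hat{c}^{(1)}_{\hat{L}-\hat{M}}$, then, being the overall smallest part, it is the first of the marked parts to be processed; afterwards $\hat{\lambda}_{\hat{L}-\hat{M}} = \hat{\lambda}^{(1)}_{\hat{L}-\hat{M}} - 1$ and $\hat{c}_{\hat{L}-\hat{M}} = \hat{c}^{(1)}_{\hat{L}-\hat{M}} - 2^{k-1}$, hence $\omega(\hat{c}_{\hat{L}-\hat{M}}) = \omega(\hat{c}^{(1)}_{\hat{L}-\hat{M}}) - 1$, and Lemma~\ref{lemma:smallestpart7} yields
\[
\hat{\lambda}_{\hat{L}-\hat{M}} = \hat{\lambda}^{(1)}_{\hat{L}-\hat{M}} - 1 \geq \omega(\hat{c}^{(1)}_{\hat{L}-\hat{M}}) - 1 = \omega(\hat{c}_{\hat{L}-\hat{M}}).
\]

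I do not expect a genuine obstacle: the statement is a direct bookkeeping consequence of Lemma~\ref{lemma:smallestpart7}, in perfect analogy with the way Lemma~\ref{lemma:smallestpart1} followed from the induction hypothesis in the forward direction. The only point I would take care to spell out is the structural observation above -- that the overall smallest part is never diminished by the processing of a different part, and when it is processed itself it drops by exactly $1$ with $2^{k-1}$ removed from its color -- since that is precisely what rules out the other ways in which Step $\hat{1}$ modifies sizes and colors.
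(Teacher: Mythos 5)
Your proof is correct and follows essentially the same two-case split as the paper: either the smallest part is untouched and Lemma~\ref{lemma:smallestpart7} applies directly, or $2^{k-1}$ is stripped from its color and both the part value and $\omega$ drop by exactly $1$, so Lemma~\ref{lemma:smallestpart7} again gives the bound. The structural observation you add --- that the smallest part, having no part below it, can only be decreased when it is itself processed, and then by exactly one unit --- is left implicit in the paper's proof, so spelling it out is a welcome clarification rather than a departure.
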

\begin{proof}
In Step $\hat{1}$, either $\hat{\lambda}_{\hat{L}-\hat{M}}^{(1)}$ is not changed at all by the procedure, and in this case we have
\[
\hat{\lambda}_{\hat{L}-\hat{M}}=\hat{\lambda}_{\hat{L}-\hat{M}}^{(1)}\overset{\eqref{eq:smallestpart7}}{\geq}\omega(\hat{c}_{\hat{L}-\hat{M}}^{(1)})=\omega(\hat{c}_{\hat{L}-\hat{M}}),
\]
or it is changed and we have
\[
\hat{c}_{\hat{L}-\hat{M}}=\hat{c}_{\hat{L}-\hat{M}}^{(1)}-2^{k-1} \implies \hat{\lambda}_{\hat{L}-\hat{M}}=\hat{\lambda}_{\hat{L}-\hat{M}}^{(1)}-1\overset{\eqref{eq:smallestpart7}}{\geq}\omega(\hat{c}_{\hat{L}-\hat{M}}^{(1)})-1=\omega(\hat{c}_{\hat{L}-\hat{M}}).
\]
\end{proof}
\end{enumerate}

The inverse procedure is complete. A final check is required to confirm that the resulting partitions are in the correct form.

\begin{proposition}
Let $\tilde{n} \coloneqq n-\sum_{j=1}^{x_k} \hat{\mu}_j.$ The partition $\hat{\mu}$ is a partition of $n-\tilde{n}$ into $x_k$ distinct parts each in the color $2^{k-1}$ and
\[
\hat{\lambda}\in\overline{S}(x_1,\dots,x_{k-1};m,\tilde{n}).
\]
\end{proposition}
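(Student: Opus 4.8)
The plan is to run through the defining conditions of $\overline{S}(x_1,\dots,x_{k-1};m,\tilde n)$ for $\hat\lambda$ and the required shape of $\hat\mu$, mirroring the verification already carried out for $\lambda^{(4)}$ in the forward direction but now feeding in the lemmas attached to Steps $\hat 4$ through $\hat 1$. Two global bookkeeping facts do most of the work. First, the total weight $n$ is, at every intermediate stage of the inverse procedure, split between the current $\lambda$-side object and the current $\mu$-side object: each operation used in Steps $\hat 4,\hat 3,\hat 2,\hat 1$ (removing or reinstalling a generalized or partial staircase, extracting a colour-$2^{k-1}$ part, redistributing colours) merely transfers boxes between the two sides. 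Hence at the end the sum of all parts of $\hat\lambda$ plus the sum of all parts of $\hat\mu$ equals $n$, and since $\tilde n$ is defined as $n-\sum_{j=1}^{x_k}\hat\mu_j$, it suffices to know that $\hat\mu$ has $x_k$ parts in order to conclude that $\hat\mu$ is a partition of $n-\tilde n$ and $\hat\lambda$ a partition of $\tilde n$. Second, the total number of occurrences of the primary colour $2^{k-1}$ among all parts is invariant: removing colourless staircases (Steps $\hat 4$, $\hat 2$) changes nothing, the redistributions in Step $\hat 3$ only permute the powers of $2$ present (Lemma~\ref{lemma:distinctpowersof2}), and the occurrences stripped from the $\lambda$-side -- by extracting colour-$2^{k-1}$ parts in Step $\hat 3$ and by deleting $2^{k-1}$ from colours in Step $\hat 1$ -- are exactly the ones deposited, one per part, into $\hat\mu$. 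Since $\hat\lambda^{(4)}$ is counted by $\overline{S}(x_1,\dots,x_k;m,n)$, it carries $x_k$ occurrences of $2^{k-1}$, so $\hat\mu$ has $x_k$ parts, all of colour $2^{k-1}$, while $\hat\lambda$ has none, i.e. lies in $2^{k-1}-1$ colours.

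For the remaining properties of $\hat\lambda$ I would argue as follows. That $\hat\lambda$ is an overpartition follows from Lemmas~\ref{lemma:diffcondstep7} and~\ref{lemma:diffcondstep8} (whenever a part is overlined, the part above it is strictly larger, so the overlined part is the first occurrence of its value); condition (iii) of Definition~\ref{def:Soverlined} is precisely Lemma~\ref{lemma:diffcondstep8}, and condition (i) precisely Lemma~\ref{lemma:smallestpart8}. The number of non-overlined parts is $m$: the generalized staircase $\hat\nu$ removed in Step $\hat 4$ has $\hat L-m$ parts (one per overlined part of $\hat\lambda^{(4)}$), Step $\hat 3$ extracts $\hat M$ parts, Step $\hat 2$ sends the top $\hat M$ parts of $\hat\nu$ into $\hat\mu^{(1)}$ and reinstalls the remaining $\hat L-m-\hat M$ as overlines on $\hat\lambda^{(2)}$, and Step $\hat 1$ alters neither the number of parts nor the overline pattern, leaving $(\hat L-\hat M)-(\hat L-m-\hat M)=m$ non-overlined parts. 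For condition (ii), Steps $\hat 4$ and $\hat 2$ move only colourless staircases, Step $\hat 3$ preserves the multiplicity of every $2^r$ with $r\le k-2$, and Step $\hat 1$ removes only $2^{k-1}$; hence for $i=1,\dots,k-1$ the number of parts of $\hat\lambda$ whose colour contains $2^{i-1}$ equals the corresponding count in $\hat\lambda^{(4)}$, namely $x_i$.

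The delicate point, which I expect to be the main obstacle, is condition (iv): that the $s_{k-1}\coloneqq\sum_{r=1}^{k-2}V_{2^r}$ smallest parts of $\hat\lambda$ are overlined, which forces one to track the staircase structure through all four inverse steps. By condition (iv) for $\hat\lambda^{(4)}$ its $\hat s^{(4)}$ smallest parts are overlined, so (Lemma~\ref{lemma:hatS}(b)) the top $\hat s^{(4)}$ parts of $\hat\nu$ form the genuine staircase $(\hat L-1,\dots,\hat L-\hat s^{(4)})$; since $\hat M\le\hat s^{(4)}$ (Lemma~\ref{lemma:MsmallerS}), removing the top $\hat M$ of them for $\hat\mu^{(1)}$ leaves, among the parts reinstalled into $\hat\lambda^{(2)}$ in Step $\hat 2$, the staircase $(\hat L-\hat M-1,\dots,\hat L-\hat s^{(4)})$ of length $\hat s^{(4)}-\hat M$, which overlines exactly the $\hat s^{(4)}-\hat M$ smallest parts of $\hat\lambda^{(1)}$. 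As Step $\hat 1$ preserves both the overline pattern and the value $v(\cdot)$ on colours, those same $\hat s^{(4)}-\hat M$ smallest parts of $\hat\lambda$ are overlined, and by Remark~\ref{rem:condition4} this count equals $\sum_{r=1}^{k-2}\hat V^{(2)}_{2^r}=s_{k-1}$. The only further subtlety is that $\hat\mu$ must have distinct parts: $\hat\mu^{(1)}$ is already strictly decreasing with every part exceeding $\hat L-\hat M$, while the parts adjoined in Step $\hat 1$ are at most $\hat L-\hat M$ and correspond to distinct positions in $\hat\lambda^{(1)}$, hence are distinct from one another and smaller than all parts of $\hat\mu^{(1)}$. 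Assembling these verifications yields $\hat\lambda\in\overline{S}(x_1,\dots,x_{k-1};m,\tilde n)$ and the stated form of $\hat\mu$, which completes the inverse map, shows the four steps define a bijection, and thereby establishes Theorem~\ref{thm:LV}.
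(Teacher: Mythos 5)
Your proposal is correct and follows essentially the same route as the paper: it verifies the defining conditions of $\overline{S}(x_1,\dots,x_{k-1};m,\tilde n)$ one by one, invoking Lemmas~\ref{lemma:diffcondstep8}, \ref{lemma:smallestpart8}, \ref{lemma:hatS}, \ref{lemma:MsmallerS} and Remark~\ref{rem:condition4} at exactly the same points, with the same counting argument for the $m$ non-overlined parts and the same staircase-tracking argument for condition (iv). The only presentational difference is that you front-load two conservation observations (total weight, and total number of occurrences of the primary colour $2^{k-1}$) that the paper handles more tersely, which is a clean way to organize the same bookkeeping.
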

\begin{proof}
The partition $\hat{\mu}$ has exactly $x_k$ parts, since there were $x_k$ parts in $\hat{\lambda}^{(4)}$ whose color contained the primary color $2^{k-1}$ and they each contributed a part of color $2^{k-1}$ in $\hat{\mu}$. Those parts are all distinct, since $\hat{\mu}^{(1)}$ had distinct parts and we only added new distinct parts to it to create $\hat{\mu}$ in Step $\hat{1}$.

We now verify all the conditions for $\hat{\lambda}$:
\begin{enumerate}
    \item \hl{It is an overpartition of $\tilde{n}$:}
    \;It is an overpartition by construction. Since all parts in $\hat{\lambda}$ and $\hat{\mu}$ come from the original $\hat{\lambda}^{(4)}$ counted by $\overline{S}(x_1,\dots,x_k;m,n),$
    \[
    \sum_{i=1}^{\hat{L}-\hat{M}}\lambda_i=n-\sum_{j=1}^{x_k}\hat{\mu}_j=\tilde{n}.
    \]
    \item \hl{It is in $2^{k-1}-1$ colors:}
    \;The overpartition $\hat{\lambda}$ is made of only $2^{k-1}-1$ colors since all the parts whose color contained $2^{k-1}$ were moved to $\hat{\mu}.$
    \item \hl{It has $m$ non-overlined parts:}
    \;To understand why, let us analyze Steps $\hat{4}$ and $\hat{2}$. First, note that $\hat{\lambda}^{(4)}$ had exactly $m$ non-overlined parts by hypothesis. In Step $\hat{4}$, the generalized staircase formed from $\hat{\lambda}^{(4)}$ had $\hat{L}-m$ parts (one for each overlined part in $\hat{\lambda}^{(4)}$). After Step $\hat{3}$, the partition $\hat{\lambda}^{(2)}$ had $\hat{L}-\hat{M}$ parts (since $\hat{M}$ is the number of parts in $\hat{\mu}^{(2)}).$ So, when the $(\hat{L}-m)-\hat{M}$ smallest parts in $\hat{\nu}$ are added back to $\hat{\lambda}^{(2)}$ in Step $\hat{2}$, the number of parts left non-overlined is exactly $(\hat{L}-\hat{M})-((\hat{L}-m)-\hat{M})=m.$ Step $\hat{1}$ does not change this. So $\hat{\lambda}$ has $m$ non-overlined parts.
    \item \hl{The smallest part satisfies $\hat{\lambda}_{\hat{L}-\hat{M}} \geq \omega(\hat{c}_{\hat{L}-\hat{M}})$:}
	\;This follows from Lemma~\ref{lemma:smallestpart8}.
    \item \hl{There are $x_i$ parts that have $2^{i-1}$ in their color's binary decomposition, for $i=1,\dots,k-1$:}
    \;This is true for $\hat{\lambda}^{(4)},$ and the number of appearances of each power of $2$ has not changed during the four steps, though they may have moved from a partition to another. Since the parts in $\hat{\mu}$ are colored with the $x_k$ appearances of $2^{k-1}$ and nothing else, then all powers from $1$ to $2^{k-2}$ are in $\hat{\lambda}$ and, by construction, there are no parts in $\hat{\lambda}$ where the same primary color is used twice for the same part.
    \item \hl{For $i=1,\dots,\hat{L}-\hat{M}-1$, $\hat{\lambda}_i-\hat{\lambda}_{i+1} \geq \omega(\hat{c}_i) + \delta(\hat{c}_i,\hat{c}_{i+1}) - \mathbf{1}_{\hat{\lambda}_{i+1} \text{ is not overlined}}$:}
    \;This follows from Lemma~\ref{lemma:diffcondstep8}.
    \item  \hl{The $\hat{s}$ smallest parts are overlined, where $\hat{s}\coloneqq\sum_{r=1}^{k-1} \hat{V}_{2^r}$:}
    \;Since the procedure in Step $\hat{1}$ only removes $2^{k-1}$ from the color of some of the parts in $\hat{\lambda}^{(1)},$ and none of the parts are colored with only $2^{k-1},$ the smallest power of $2$ in the binary decomposition of the color of each part remains the same throughout the procedure. Therefore, $\hat{s}=\hat{s}^{(1)},$ and since $\hat{s}^{(1)}\coloneqq\sum_{r=1}^{k-2} \hat{V}^{(1)}_{2^r} = \sum_{r=1}^{k-2} \hat{V}^{(2)}_{2^r} = \hat{s}^{(4)} - \hat{M}$ (see Remark~\ref{rem:condition4} for the last equality), the $\hat{s}^{(1)}$ smallest parts in $\hat{\lambda}^{(1)}$ are overlined. 
\end{enumerate}
\end{proof}

\section{Conclusion}\label{sec:conclusion}

In this paper, an interpretation of the infinite product
\[
\frac{(-y_1 q;q)_\infty \cdots (-y_k q;q)_\infty}{(y_1 d q;q)_\infty},
\]
is provided as a generating function for a class of colored overpartitions.

In addition to the result proved in this work, further generalizations naturally suggest themselves. For instance, the following conjectural identity appears to extend the framework in a meaningful way:
\begin{definition}\label{def:Sj}
Let $x_1,\dots,x_k,m,n\in\mathbb{N}$ and $j\in\{1,\dots,k\}.$ We define $\overline{S_j}(x_1,\dots,x_k;m,n)$ to be the same as $\overline{S}(x_1,\dots,x_k;m,n)$ from Definition~\ref{def:Soverlined} except for the fourth condition which is replaced by:
\begin{enumerate}
   \item[$(\widetilde{iv})$] the $s$ smallest parts are overlined, where
\[
s \coloneqq \sum_{\substack{r=0\\r \neq j-1}}^{k-1} V_{2^r},
\]
and $V_{2^r}$ is the number of parts $\lambda_i$ such that $v(c_i)=2^r.$
\end{enumerate}
\end{definition}
\begin{conjecture}
In the setting of Definition~\ref{def:Sj}, the following identity holds:
\[
\sum_{\substack{x_1, \dots, x_k, m, n \geq 0}} \overline{S_j}(x_1,\dots,x_k;m,n) y_1^{x_1} \cdots y_k^{x_k} d^m q^n = \frac{(-y_1 q;q)_\infty \cdots (-y_k q;q)_\infty}{(y_j d q;q)_\infty}.
\]
\end{conjecture}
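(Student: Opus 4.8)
The plan is to prove the conjecture by the same strategy as Theorem~\ref{thm:LV}: an explicit four-step bijection organized by an induction on $k$, with $j$ held fixed. The colour $2^{0}$, which in Theorem~\ref{thm:LV} is the ``seed'' colour carrying the denominator factor, will here be played by the colour $2^{j-1}$. Concretely, one reads the right-hand side as
\[
\Bigl(\textstyle\prod_{i\neq j}(-y_i q;q)_\infty\Bigr)\cdot\frac{(-y_j q;q)_\infty}{(y_j d q;q)_\infty},
\]
where the last factor is the generating function for overpartitions all of colour $2^{j-1}$ (this is the $k=1$ case of Theorem~\ref{thm:LV} with the single colour relabelled), and the colours $\{2^{r}:r\neq j-1\}$ split into those \emph{above} $2^{j-1}$ (namely $2^{j},\dots,2^{k-1}$) and those \emph{below} it (namely $2^{0},\dots,2^{j-2}$). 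The overpartitions counted by $\overline{S_j}$ are then produced by starting from a seed overpartition in colour $2^{j-1}$ and inserting the remaining colours one at a time.

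For the colours above $2^{j-1}$ nothing new is required: inserting a distinct-parts partition in the current largest colour $2^{k-1}$ is carried out by the very four steps of Section~\ref{subsec:caseKbigger3}, and Lemmas~\ref{lemma:diffcondstep1}--\ref{lemma:smallestpart4} apply verbatim, since the difference conditions and the smallest-part condition never refer to \emph{which} colour is distinguished. The one point requiring attention is the bookkeeping of condition~(iv): the analogue of Lemma~\ref{lemma:fourthcond} must be re-proved with $2^{j-1}$ in the role of $2^{0}$, i.e.\ with the relevant count being $s=\sum_{r\neq j-1}V_{2^{r}}$. Because the colour being added has $v$-value $2^{k-1}$ with $k-1\neq j-1$, each inserted part — and each redistribution of colours in Step~3 — changes this sum by exactly $1$, exactly as in the proof of Lemma~\ref{lemma:fourthcond}; hence the induction step $k-1\to k$ goes through whenever $j<k$, and the problem reduces to the base case $k=j$.

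The base case $k=j$ requires a \emph{downward} variant of the bijection: one inserts, into the overpartition built so far, a distinct-parts partition in a colour $2^{t}$ that is \emph{smaller} than every primary colour present. The four steps are mirror images of those of Section~\ref{subsec:caseKbigger3}. In Step~1 one adds $2^{t}$ to the colour of $\lambda_{p}$; since $2^{t}<v(c_{p})$, this changes $v(c_{p})$ rather than $z(c_{p})$, so it is the difference $\lambda_{p-1}-\lambda_{p}$ (governed by $\delta(c_{p-1},c_{p})$) rather than $\lambda_{p}-\lambda_{p+1}$ that is affected, and the auxiliary function $\delta^{*}$ of Lemma~\ref{lemma:diffcondstep1} is replaced by one that modifies its \emph{second} argument, $\delta^{*}(a,b)=\delta(a,\,b-2^{t})$ when $2^{t}$ occurs in the binary decomposition of $b$. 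Correspondingly, the redistribution of colours in Step~3 moves the \emph{largest} powers of $2$ (instead of the smallest) and acts on the pair sitting \emph{below} the inserted part; Steps~2 and~4 (removal and re-insertion of the partial staircase) are unchanged. One then re-proves the difference-condition and smallest-part lemmas (the analogues of Lemmas~\ref{lemma:diffcondstep1}--\ref{lemma:smallestpart4}, and of Lemmas~\ref{lemma:distinctpowersof2}--\ref{lemma:smallestpart8} for the inverse) by the mirror-image computations. For $j=2$ there is a single colour below $2^{1}$, namely $2^{0}$, and this downward step is essentially Lovejoy's bijective proof of Theorem~\ref{thm:Lovejoy2017}(b) — the mirror of the $k=2$ case treated in Section~\ref{subsec:caseKis2} — which can serve as the base of a secondary induction on the number of colours inserted below $2^{j-1}$.

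The main obstacle is the condition-(iv) bookkeeping once colours lie on \emph{both} sides of the distinguished colour $2^{j-1}$. One must show that inserting a part of the new smallest colour $2^{t}$ — including when it triggers a redistribution — changes $s=\sum_{r\neq j-1}V_{2^{r}}$ by exactly $1$, so that the staircase of forced overlines has the right length; and one must show that these downward redistributions are uniquely reversible, i.e.\ prove the analogue of Lemma~\ref{lemma:distinctpowersof2} in a situation where the primary colours straddling $2^{j-1}$ make the ``distinct powers of $2$'' argument more delicate than in the extremal case treated for Theorems~\ref{thm:CorteelLovejoy} and~\ref{thm:LV}. A secondary nuisance is that, with colours inserted both above and below $2^{j-1}$, one must fix a definite order of insertion (for instance: all the colours below, in decreasing order, followed by all the colours above, in increasing order) and check that the resulting composite map is well defined and bijective; this is routine once the two elementary steps are in place, but it is where the separate pieces must be glued together.
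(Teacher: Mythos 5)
The statement you are addressing is labelled a \emph{Conjecture} in the paper: the authors do not give a proof, and in fact they present an iterative--bijective proof of this identity as an open problem. So there is no ``paper's own proof'' to compare against; the question is whether your plan closes the gap, and I do not think it does yet.

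Your reduction ``the induction step $k-1\to k$ goes through whenever $j<k$, and the problem reduces to the base case $k=j$'' relies on the claim that the analogue of Lemma~\ref{lemma:fourthcond} works verbatim for the upward insertions, i.e.\ that inserting a part of colour $2^{k-1}$ always increases $s=\sum_{r\neq j-1}V_{2^r}$ by exactly $1$. That is false as soon as $j>1$, because a redistribution of colours can produce a new part whose colour has $v$-value exactly $2^{j-1}$. Concretely, take $k=3$, $j=2$ (distinguished colour $2^{j-1}=2$), suppose the inserted part of colour $4$ lands just below a part of colour $3=1+2$ at gap $1$; then $\delta(3,4)=1$ and $\omega(3)=2>1+1-\delta(3,4)$, so a redistribution occurs, producing colours $\tilde c_{i-1}=1+4=5$ and $\tilde c_i=2$. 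The new part has $v(\tilde c_i)=2=2^{j-1}$, so it does \emph{not} contribute to $s$, and the count underlying condition~(iv) goes wrong by one. The paper's Lemma~\ref{lemma:fourthcond} avoids this precisely because the excluded power is $2^0$, the smallest possible, so the new part produced by a redistribution always carries a strictly larger $v$; once the excluded power sits in the middle of the range, that argument breaks. You flagged this bookkeeping issue for the ``downward'' insertions, but it already obstructs the ``upward'' step as soon as the partial partition contains colours below $2^{j-1}$, so the claimed reduction to the base case $k=j$ is not established. Any successful extension will need either a modified redistribution rule that protects the $v$-value $2^{j-1}$, a different insertion order that keeps the excluded power extremal throughout, or a revised condition~(iv) analysis; none of these is supplied, so the proposal as written has a genuine gap rather than a filled-in proof.
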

\smallskip
Another natural direction is to seek an iterative-bijective proof of the following theorem:
\smallskip
\begin{definition}\label{def:Toverlined}
Let $x_1,\dots,x_k,m,n\in\mathbb{N}$ and $j\in\{1,\dots,k\}.$ We define $\overline{T}(x_1,\dots,x_k;m,n)$ to be the same as $\overline{S}(x_1,\dots,x_k;m,n)$ from Definition~\ref{def:Soverlined}, however \underline{without} condition \textit{(iv)}.
\end{definition}
\begin{theorem}[Dousse]
In the setting of Definition~\ref{def:Toverlined}, the following identity holds:
\[
\sum_{\substack{x_1, \dots, x_k,\, m,\, n \geq 0}} \overline{T}(x_1,\dots,x_k; m, n)\, y_1^{x_1} \cdots y_k^{x_k} d^m q^n = \frac{(-y_1 q;q)_\infty \cdots (-y_k q;q)_\infty}{(y_1 d q;q)_\infty \cdots (y_k d q;q)_\infty}.
\]
\end{theorem}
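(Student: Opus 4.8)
The plan is to prove the identity by induction on $k$, iterating a bijective proof of its $k=2$ case --- which is exactly Theorem~\ref{thm:SchurOverpartitions} --- in the same way Theorem~\ref{thm:LV} iterates Theorem~\ref{thm:Lovejoy2017}. The base case $k=1$ is immediate: dropping condition (iv), $\overline{T}(x_1;m,n)$ counts \emph{all} overpartitions of $n$ in one color with $x_1$ parts and $m$ non-overlined parts, whose generating function is $\frac{(-y_1q;q)_\infty}{(y_1dq;q)_\infty}$, exactly as in Section~\ref{subsec:caseKis1}.

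For the inductive step, factor the right-hand side as
\[
\left(\frac{(-y_1q;q)_\infty\cdots(-y_{k-1}q;q)_\infty}{(y_1dq;q)_\infty\cdots(y_{k-1}dq;q)_\infty}\right)\cdot\frac{(-y_kq;q)_\infty}{(y_kdq;q)_\infty}.
\]
By induction the first factor is the generating function for the overpartitions counted by $\overline{T}(x_1,\dots,x_{k-1};m,n)$, while the second is the generating function for overpartitions all of whose parts carry the single color $2^{k-1}$ --- that is, the $k=1$ instance of the very statement being proved, with color $2^{k-1}$ in place of color $1$. So one needs a bijection from pairs $(\lambda,\mu)$, with $\lambda$ counted by $\overline{T}(x_1,\dots,x_{k-1};m_1,\tilde n)$ and $\mu$ an overpartition of $n-\tilde n$ entirely in color $2^{k-1}$ with $m_2$ non-overlined parts, onto the overpartitions counted by $\overline{T}(x_1,\dots,x_k;m_1+m_2,n)$. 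I would split $\mu$ into its overlined subpartition $\mu^{\mathrm o}$ (distinct parts) and its non-overlined subpartition $\mu^{\mathrm d}$ (repetitions allowed). The part $\mu^{\mathrm o}$ is absorbed by a four-step procedure modelled on Section~\ref{subsec:caseKbigger3} --- Step~1 merges the parts $\le L$ and records $2^{k-1}$ in the relevant colors, Steps~2--4 remove a partial staircase and restore it --- re-analyzed so that the output still satisfies conditions (i)--(iii); the proofs of the difference-condition and smallest-part lemmas, and of Lemma~\ref{lemma:redistribcolors}, never invoke condition (iv), so they carry over. The part $\mu^{\mathrm d}$ is then inserted, largest to smallest, as non-overlined parts of color $2^{k-1}$, with a redistribution of colors against the part immediately above whenever the gap bound $\lambda_{i-1}-\lambda_i\geq\omega(c_{i-1})+\delta(c_{i-1},c_i)-1$ would otherwise fail; this is precisely the Step~3 mechanism, so the relevant lemmas should transfer, and a count in the style of Lemma~\ref{lemma:fourthcond} recovers the non-overlined total $m_1+m_2$ and the multiplicities $x_1,\dots,x_k$.

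The hard part will be invertibility. In the proof of Theorem~\ref{thm:LV}, condition (iv) is the rigidity that lets the inverse locate the split between the staircase from $\mu^{(1)}$ and the generalized staircase from the original overlined parts of $\lambda$ (Lemmas~\ref{lemma:hatS} and~\ref{lemma:MsmallerS}) and that fixes the order in which redistributions are undone; without it the inverse must be steered purely by colors. I expect the inverse to run in two phases --- scan from the smallest part, first extracting the non-overlined color-$2^{k-1}$ parts and reversing their redistributions as in Step~$\hat{3}$, then reversing the four-step absorption of $\mu^{\mathrm o}$ as in Steps~$\hat{4},\hat{2},\hat{1}$ --- and the real work is to prove that these two phases can be cleanly disentangled (e.g.\ that every redistribution caused by $\mu^{\mathrm d}$ lies ``below'' everything produced by $\mu^{\mathrm o}$, or can at least be distinguished from it), together with re-checking the ``no small part in a heavy color'' arguments, now that the inserted color-$2^{k-1}$ parts are unrestricted rather than distinct. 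Once that separation is in place, the remaining verifications should be routine adaptations of the lemmas of Section~\ref{subsec:caseKbigger3}; a natural alternative, should the two-phase split prove awkward, is a single-pass bijection obtained by iterating Lovejoy's 2005 proof of Theorem~\ref{thm:SchurOverpartitions} directly, where the same disentangling issue reappears as the need to track how the freely repeating non-overlined color-$2^{k-1}$ parts interleave with the staircase.
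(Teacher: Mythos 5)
The theorem you are attempting to prove is not proved in this paper at all: it appears only in the conclusion (Section~\ref{sec:conclusion}) as a stated \emph{open direction}. The paper explicitly records that an iterative-bijective proof of this result ``remains to be discovered,'' and that Dousse's known proof proceeds via the method of weighted words together with $q$-difference equations, not via a step-by-step bijection. Your plan is therefore exactly the kind of argument the author invites future work to supply, but you have not carried it out.

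The gap in your proposal is the one you flag yourself, and it is genuine. The inverse map in Section~\ref{subsec:caseKbigger3} leans heavily on condition~(iv) of Definition~\ref{def:Soverlined}: Lemma~\ref{lemma:hatS}(b) uses it to show that the top of the generalized staircase $\hat{\nu}$ is an honest staircase $(\hat{L}-1,\dots,\hat{L}-\hat{s}^{(4)})$, and Lemma~\ref{lemma:MsmallerS} uses it to guarantee $\hat{M}\le\hat{s}^{(4)}$, which is what licenses peeling off the top $\hat{M}$ rows of $\hat{\nu}$ and returning them to $\hat{\mu}$ in Step~$\hat{2}$. Once condition~(iv) is dropped, neither structural fact is available: after Step~$\hat{4}$ there is no canonical rule for deciding how much of $\hat{\nu}$ came from $\mu^{(1)}$ and how much came from overlines already present in $\lambda$. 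On top of this, letting $\mu$ be a full overpartition breaks the Step~2 staircase extraction from $\mu^{(1)}$ outright, since that step requires $\mu^{(1)}$ to have \emph{distinct} parts all larger than $L$. Your proposed split into $\mu^{\mathrm{o}}$ and $\mu^{\mathrm{d}}$ is a reasonable first idea, but the interleaving of the non-overlined color-$2^{k-1}$ insertions with the staircase bookkeeping is precisely the thing that has to be controlled, and you have offered a plan for attacking it rather than an argument. Until you exhibit a well-defined inverse (or otherwise prove the forward map is a bijection), the inductive step is not established, and the statement remains what the paper says it is: a known theorem of Dousse whose iterative-bijective proof is an open problem.
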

This result is a natural generalization of Theorem~\ref{thm:SchurOverpartitions}, Lovejoy's \emph{Schur's theorem for overpartitions}~\cite{Lovejoy2005}, and was previously proved by J.~Dousse~\cite{Dousse2017} using a combination of the method of weighted words and $q$-difference equations techniques. Additionally, the case $k = 2$ was established bijectively by Raghavendra and Padmavathamma~\cite{RagPad2009}, although not through the same type of iterative-bijective argument employed in the proof of Theorem~\ref{thm:LV}.

These identities suggest interesting open problems whose iterative-bijective proofs remain to be discovered. Investigating them further could yield deeper insights into the structure of overpartitions and the mechanisms underlying their generating functions.

\section{Acknowledgements}
This work originates from the author’s master’s thesis at the University of Geneva. The author thanks Professor Jehanne Dousse for suggesting the problem and for her supervision.

\bibliographystyle{plain}
\addcontentsline{toc}{section}{References}
\bibliography{paper-VELENIK-Laure}

\end{document}